\newcommand{\rs}{\mathord{\upharpoonright}}
\newcommand{\e}{\varepsilon}
\newcommand{\Q}{\mathbb{Q}}
\newcommand{\R}{\mathbb{R}}
\newcommand{\Z}{\mathbb{Z}}
\newcommand{\D}{\mathbb{D}}
\newcommand{\N}{\mathbb{N}}
\newcommand{\C}{\mathbb{C}}
\newcommand{\cY}{\mathcal{Y}}
\newcommand{\cE}{\mathcal{E}}
\newcommand{\eps}{\varepsilon}
\newcommand{\SOTh}{\mathrm{SOT}\text{-}}
\newcommand{\WOTh}{\mathrm{WOT}\text{-}}
\newcommand{\NN}{\mathbb{N}}
\newcommand{\cstu}{\mathrm{C}^*_u}
\newtheorem*{rigprob*}{Rigidity Problem for uniform Roe Algebras}
\newtheorem*{rigprobcorona*}{Rigidity Problem for uniform Roe Coronas}
\newcommand{\cst}{\mathrm{C}^*}
\newcommand{\cstar}{$\mathrm{C}^*$}
\newcommand{\cZ}{\mathcal{Z}}
\newcommand{\cP}{\mathcal{P}}
\newcommand{\cB}{\mathcal{B}}
\newcommand{\cK}{\mathcal{K}}
\newcommand{\cD}{\mathcal D} 
\newcommand{\cA}{\mathcal A} 
\newcommand{\Out}{\mathrm{Out}}
\newcommand{\Coa}{\mathrm{Coa}}
\newcommand{\BijCoa}{\mathrm{BijCoa}}
\newcommand{\Aut}{\mathrm{Aut}}
\newcommand{\Inn}{\mathrm{Inn}}
\newcommand{\BD}{\mathrm{BD}}
\newtheorem{theorem}{Theorem}[section]
\newtheorem*{theorem*}{Theorem}
\newtheorem{proposition}[theorem]{Proposition}
\newtheorem*{proposition*}{Proposition}
\newtheorem{lemma}[theorem]{Lemma}
\newtheorem*{lemma*}{Lemma}
\newtheorem{corollary}[theorem]{Corollary}
\newtheorem*{corollary*}{Corollar}
\newtheorem*{fact*}{Fact}
\theoremstyle{definition}
\newtheorem{definition}[theorem]{Definition}
\newtheorem*{definition*}{Definition}
\newtheorem{claim}[theorem]{Claim}
\newtheorem*{claim*}{Claim}
\newtheorem*{conjecture*}{Conjecture}
\newtheorem{theoremi}{Theorem}
\newtheorem{problemi}{Problem}
\newtheorem{assumption}[theorem]{Assumption}
\theoremstyle{remark}
\newtheorem*{example*}{Example}
\newtheorem{remark}[theorem]{Remark}
\newtheorem*{remark*}{Remark}
\newtheorem*{note*}{Note}
\newtheorem*{question*}{Question}
\newtheorem*{acknowledgements*}{Acknowledgements}
\newcommand{\norm}[1]{\left\lVert #1 \right\rVert}
\DeclareMathOperator{\Span}{span}
\DeclareMathOperator{\supp}{supp}
\DeclareMathOperator{\propg}{prop}
\DeclareMathOperator{\Proj}{Proj}
\DeclareMathOperator{\rank}{rank}
\DeclareMathOperator{\Ad}{Ad}
\DeclareMathOperator{\diam}{diam}
\DeclareMathOperator{\spann}{span}
\newcounter{my_enumerate_counter}
\newcommand{\pushcounter}{\setcounter{my_enumerate_counter}{\value{enumi}}}
\newcommand{\popcounter}{\setcounter{enumi}{\value{my_enumerate_counter}}}
\begin{document}

\title[A Gelfand-type duality for coarse metric spaces ]{A Gelfand-type duality for coarse metric spaces with property A}%
\author[B. M. Braga]{Bruno M. Braga}
\address[B. M. Braga]{University of Virginia, 141 Cabell Drive, Kerchof Hall, P.O. Box 400137, Charlottesville, USA}
\email{demendoncabraga@gmail.com}
\urladdr{https://sites.google.com/site/demendoncabraga}

\author[A. Vignati]{Alessandro Vignati}
\address[A. Vignati]{
Institut de Math\'ematiques de Jussieu - Paris Rive Gauche (IMJ-PRG)\\
Universit\'e Paris Diderot\\
B\^atiment Sophie Germain\\
8 Place Aur\'elie Nemours \\ 75013 Paris, France}
\email{ale.vignati@gmail.com}
\urladdr{http://www.automorph.net/avignati}

\subjclass[2010]{46L05, 20F65,	51F99}
\keywords{}
\thanks{ }
\date{\today}%
 
\begin{abstract} We prove the following two results for a given uniformly locally finite metric space with Yu's property A:
\begin{enumerate}
\item The group of outer automorphisms of its uniform Roe algebra is isomorphic to its group of bijective coarse equivalences modulo closeness.
\item The group of outer automorphisms of its Roe algebra is isomorphic to its group of coarse equivalences modulo closeness.
\end{enumerate}
The main difficulty lies in the latter. To prove that, we obtain several uniform approximability results for maps between Roe algebras and use them to obtain a theorem about the `uniqueness' of Cartan masas of Roe algebras. We finish the paper with several applications of the results above to concrete metric spaces. 
\end{abstract}
\maketitle


\section{Introduction}\label{SecIntro}

Given the class of metric spaces, consider the following three kinds of morphisms: (1) homeomorphisms --- maps preserving the topological structure ---, (2) coarse equivalences --- maps preserving the large-scale geometry --- and (3) bijective coarse equivalences. In short, coarse equivalences uniformly send close points to close points, far points to far points, and have
large image in their codomain. Although, the set of homeomorphisms of a metric space forms a group under composition, this is not the case for coarse equivalences. Indeed, coarse equivalences need to be neither injective nor surjective. However, the set of coarse equivalences on a metric space $(X,d)$ becomes a group after identifing coarse equivalences which are \emph{close} to each other (see Definition~\ref{def:close}). We denote by $\Coa(X)$ the group of all coarse equivalences of $X$ modulo the closeness relation and by $\BijCoa(X)$ the group of all bijective coarse equivalences of $X$ modulo closeness (we refer the reader to \S\ref{SecPrelim} for details).

In case $X$ is locally compact, homeomorphisms correspond, thanks to Gelfand's transform, to automorphisms of the \cstar-algebra of continuous functions on $X$ vanishing at infinity, $C_0(X)$. The goal of this paper is to give an operator algebraic characterisation of the groups $\Coa(X)$ and $\BijCoa(X)$, at least when dealing with metric spaces with certain regularity properties. The objects apt to this coarse Gelfand-type correspondence are Roe-type \cstar-algebras. These \cstar-algebras were introduced by Roe in \cite{Roe1993} for their connections to (higher) index theory and the associated applications to manifold topology and geometry (\cite{Roe1996}). The Roe algebra and its uniform version were studied precisely to detect \cstar-algebraically the large-scale geometry of metric spaces. Their study was boosted due to their intrinsic relation with the coarse Baum-Connes conjecture and consequently with the coarse Novikov conjecture (\cite{Yu2000}). Recently, Roe-type algebras and their $K$-theory have been used as a framework in mathematical physics to study the classification of topological phases and the topology of quantum systems (\cite{EwertMeyer2019,Kubota2017}). 

We now describe our main results. Given a metric space $(X,d)$ and a Hilbert space $H$, $\ell_2(X,H)$ denotes the Hilbert space of square summable $H$-valued functions on $X$. Operators in $\mathcal B(\ell_2(X,H))$ can be seen as $X\times X$-matrices whose entries are in $\mathcal B(H)$. Given an operator $a=(a_{xy})_{x,y\in X}$ in $\mathcal B(\ell_2(X,H))$, we define its propagation as the quantity 
\[
\propg(a)=\sup \{d(x,y)\mid a_{xy}\neq 0\}.
\]
 If $H$ is separable and infinite-dimensional, the \emph{\cstar-algebra of band-dominated operators of $(X,d)$}, denoted by $\BD(X)$, is the norm closure of the $^*$-algebra of finite propagation operators. If in addition we demand each entry $a_{xy}$ to be compact, we obtain the \emph{Roe algebra of $X$}, $\cst(X)$. Finally, if $H=\mathbb C$, the \emph{uniform Roe algebra of $X$}, $\cstu(X)$, is defined once again as the norm closure of the $^*$-algebra of finite propagation operators on $\ell_2(X)=\ell_2(X,\C)$ (see \S\ref{SecIntro} for more details on those definitions).

Let us first focus on the case of $\BijCoa(X)$. A bijective coarse equivalence of $X$ induces an automorphism of $\cstu(X)$ in a canonical way, and two bijective coarse equivalences are close if and only if the associated isomorphisms are unitarily equivalent in $\cstu(X)$ (see \S\ref{SubsectionCanonicalMap}). This gives a well defined canonical group monomorphism from $\BijCoa(X)$ into $\Out(\cstu(X))$, the latter being the group of outer automorphisms of $\cstu(X)$, i.e., 
\[
\Out(\cstu(X))=\Aut(\cstu(X))/\Inn(\cstu(X)).
\]
\begin{problemi}[Gelfand-type duality for bijective coarse equivalences] 
Let $X$ be a uniformly locally finite metric space. Is the canonical homomorphism
\[\BijCoa(X)\to \Out(\cstu(X))\]
a group isomorphism?\label{ProblemA}
\end{problemi}

The work of White and Willett on uniqueness of Cartan masas in uniform Roe algebras in presence of Yu's property A can be used to give a positive answer to Problem \ref{ProblemA} (again in the presence of property A). The following, proven as Theorem \ref{ThmIsoCoarseAutURA}, is a consequence of \cite[Theorem E]{WhiteWillett2017}: 
 
\begin{theoremi}\label{thm:uniform}
Let $(X,d)$ be a uniformly locally finite metric space with property A. The canonical homomorphism 
\[
\BijCoa(X)\to\Out(\cstu(X))
\]
is a group isomorphism.
\end{theoremi}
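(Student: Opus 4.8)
Since the canonical map is already known to be an injective group homomorphism, the entire content of the statement is surjectivity: I must show that every class $[\Phi]\in\Out(\cstu(X))$ is induced by a bijective coarse equivalence. The plan is to first normalise $\Phi$ so that it preserves the canonical masa $\ell_\infty(X)$, then read off a bijection of $X$ from its action on the diagonal, and finally check that this bijection is a coarse equivalence which implements $\Phi$ modulo an inner automorphism. The decisive reduction is the first one: since $X$ is uniformly locally finite, $\ell_\infty(X)$ is a Cartan masa of $\cstu(X)$, so $\Phi(\ell_\infty(X))$ is again a Cartan masa, and here \emph{property A enters} through \cite[Theorem E]{WhiteWillett2017}, which supplies a unitary $u\in\cstu(X)$ with $u\,\Phi(\ell_\infty(X))\,u^*=\ell_\infty(X)$. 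Setting $\Psi=\Ad(u)\circ\Phi$ I obtain an automorphism with $\Psi(\ell_\infty(X))=\ell_\infty(X)$ and $[\Psi]=[\Phi]$ in $\Out(\cstu(X))$. I expect this step to be the genuine heart of the matter, and it is the only place where property A is used; everything afterwards is, by comparison, bookkeeping that does not require it.

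With $\Psi$ preserving $\ell_\infty(X)$, the restriction $\Psi|_{\ell_\infty(X)}$ permutes the minimal projections $\{e_x:x\in X\}$, yielding a bijection $f\colon X\to X$ with $\Psi(e_x)=e_{f(x)}$. The rank-one matrix units $e_{xy}$ lie in $\cstu(X)$ because $\propg(e_{xy})=d(x,y)<\infty$, and $\Psi(e_{xy})$ is a partial isometry with source $e_{f(y)}$ and range $e_{f(x)}$; this forces $\Psi(e_{xy})=\lambda_{xy}\,e_{f(x)f(y)}$ for some $\lambda_{xy}\in\TT$, while multiplicativity of $\Psi$ gives the cocycle identity $\lambda_{xy}\lambda_{yz}=\lambda_{xz}$.

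Next I would verify that $f$ is a coarse equivalence. The key observation is that any $b\in\cstu(X)$ has decaying far-off-diagonal entries: approximating $b$ in norm by a finite-propagation operator shows that for every $\varepsilon>0$ there is $s>0$ with $|b_{pq}|<\varepsilon$ whenever $d(p,q)>s$. Suppose $f$ were not bornologous; then for some $r$ there are pairs $(x_n,y_n)$ with $d(x_n,y_n)\le r$ but $d(f(x_n),f(y_n))\to\infty$, and by uniform local finiteness I may thin out so that the coordinates are distinct. Then $a=\sum_n e_{x_n y_n}$ is an orthogonal sum, hence a norm-one element of propagation $\le r$, so $\Psi(a)\in\cstu(X)$; but $\Psi(a)$ has entry of modulus $1$ at $(f(x_n),f(y_n))$ for every $n$, contradicting the decay estimate applied with $\varepsilon=1/2$. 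Thus $f$ is bornologous, and applying the same argument to $\Psi^{-1}$ (which induces $f^{-1}$) shows $f^{-1}$ is bornologous; since $f$ is a bijection, it is a bijective coarse equivalence.

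Finally I assemble the pieces. Writing $u_f$ for the permutation unitary $u_f\delta_x=\delta_{f(x)}$, one computes $\Ad(u_f)(e_{xy})=e_{f(x)f(y)}$, so $\Theta:=\Ad(u_f)^{-1}\circ\Psi$ fixes every $e_x$ and scales $e_{xy}$ by $\lambda_{xy}$. The cocycle identity lets me solve the coboundary equation $\lambda_{xy}=\mu_x\overline{\mu_y}$ (take $\mu_x=\lambda_{xx_0}$ for a fixed basepoint $x_0$), and with $w=\sum_x\mu_x e_x\in\ell_\infty(X)\subseteq\cstu(X)$ a direct matrix-entry computation (using $e_x\Theta(a)e_y=\Theta(e_x a e_y)=a_{xy}\lambda_{xy}e_{xy}$) gives $\Theta=\Ad(w)$, whence $\Psi=\Ad(u_f w)$. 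Since $w$ is a unitary of $\cstu(X)$, the automorphism $\Ad(w)$ is inner, so $[\Phi]=[\Psi]=[\Ad(u_f)]$ is precisely the image of the class of $f$ under the canonical map. This establishes surjectivity and, together with the known injectivity, completes the proof.
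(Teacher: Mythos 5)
Your proof is correct, and its skeleton coincides with the paper's: both proofs treat injectivity as already established by the construction of the canonical map, and both reduce surjectivity to the White--Willett Cartan uniqueness theorem (property A enters only there, exactly as you say). Where you diverge is in everything after the normalisation $\Psi=\Ad(u)\circ\Phi$. The paper finishes in three lines by citing two external results: spatial implementation of automorphisms of $\cstu(X)$ (\cite[Lemma 3.1]{SpakulaWillett2013AdvMath}), which gives $\Psi=\Ad(v)$ for a unitary $v\in\cB(\ell_2(X))$, and the structure of unitaries normalising $\ell_\infty(X)$ (\cite[Lemma 8.10]{BragaFarah2018}), which gives $v\delta_x=\lambda_x\delta_{f(x)}$, so that $v=v_f\,\mathrm{diag}(\lambda_x)$ and $[\Psi]=[\Ad(v_f)]$. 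You instead never pass to a spatial implementation: you read the bijection $f$ off the permutation of minimal projections of $\ell_\infty(X)$, identify $\Psi(e_{xy})$ up to a phase $\lambda_{xy}$, and absorb the phases by solving the coboundary equation $\lambda_{xy}=\mu_x\overline{\mu_y}$ with a diagonal unitary $w\in\ell_\infty(X)$. Your route buys two things: it is self-contained (no appeal to the two cited lemmas, only to elementary matrix-unit computations), and it makes explicit a point the paper leaves implicit, namely the verification that $f$ is actually a bijective \emph{coarse equivalence} --- your argument with the block partial isometry $a=\sum_n e_{x_ny_n}$ and the off-diagonal decay of elements of $\cstu(X)$ is precisely the rigidity-type fact needed for $[\Ad(v_f)]$ to lie in the image of the canonical map. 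The paper's route is shorter but outsources exactly these verifications to the literature; the mathematical content is the same.
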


Theorem \ref{thm:uniform} gives an alternative way to compute the outer automorphism group of a uniform Roe algebra. As a simple application, it can be used to show that all automorphisms of $\cstu(\N)$ are inner and that $\Out(\cstu(\Z))\cong\Z_2$ (see Corollary \ref{CorOutNandZ}).

 Let us now focus on the case of coarse equivalences. Although coarse equivalences do not induce uniform Roe algebra isomorphisms (for instance, all finite metric spaces are coarsely equivalent, but, if $X$ and $Y$ are finite, $\cstu(X)$ and $\cstu(Y)$ are isomorphic if and only if $|X|=|Y|$), they do induce isomorphisms between Roe algebras. If $X$ is a uniformly locally finite metric space, assigning an element of $\Aut(\cst(X))$ to a coarse equivalence of $X$ is highly non-canonical (see \S\ref{SubsectionCanonicalMap}). Such an assignment becomes canonical when considered as a map from $\Coa(X)$ to $\Aut(\cst(X))$ modulo $\Inn(\BD(X))$ --- notice that there are a couple of hidden claims in here: (1) we are allowed to mod out our maps and (2) $\Inn(\BD(X))$ is a normal subgroup of $\Aut(\cst(X))$ (the latter follows since we prove that $\BD(X)$ is the multiplier algebra of $\cst(X)$, see Theorem~\ref{prop:mult}). We form the outer automorphism group of $\cst(X)$ by letting 
\[
\Out(\cst(X))=\Aut(\cst(X))/\Inn(\BD(X)).
\]

\begin{problemi}[Gelfand-type duality for coarse equivalences]
Let $X$ be a uniformly locally finite metric space. Is the canonical homomorphism
\[\Coa(X)\to \Out(\cst(X))\]
a group isomorphism?\label{ProblemB}
\end{problemi}

We give a positive answer to Problem \ref{ProblemB} above in the case of property A. This is proven as Theorem~\ref{ThmIsoCoa}.

\begin{theoremi}\label{thm:main}
Let $(X,d)$ be a uniformly locally finite metric space with property A. The canonical homomorphism
\[
\Coa(X)\to\Out(\cst(X))
\]
is a group isomorphism.
\end{theoremi}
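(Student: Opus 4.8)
The plan is to show the canonical homomorphism is both injective and surjective, the latter being the substantial direction. Throughout I use Theorem~\ref{prop:mult}, which identifies $\BD(X)$ with the multiplier algebra $M(\cst(X))$: this lets me extend any $\alpha\in\Aut(\cst(X))$ to $\bar\alpha\in\Aut(\BD(X))$ and to correct $\alpha$ by conjugating with unitaries drawn from $\BD(X)$, so that the ambiguity in the assignment $\Coa(X)\to\Aut(\cst(X))$ is exactly the one absorbed by passing to $\Inn(\BD(X))$.

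The key structure is the masa $\cA=\ell_\infty(X)\otimes\cD\subseteq\BD(X)$, where $\cD\subseteq\mathcal B(H)$ is the diagonal masa for a fixed orthonormal basis; this is a maximal abelian subalgebra of $\mathcal B(\ell_2(X,H))$ (its compact part $\cA\cap\cst(X)$ being a Cartan masa of $\cst(X)$), and it plays for $\cst(X)$ the role that $\ell_\infty(X)$ plays for $\cstu(X)$ in Theorem~\ref{thm:uniform}. The technical heart of the proof is a uniqueness theorem for such Cartan masas: for every $\alpha\in\Aut(\cst(X))$ there is a unitary $u\in\BD(X)$ with $\Ad(u)\circ\bar\alpha$ carrying $\cA$ onto itself. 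This is the Roe-algebra analogue of the White--Willett result underlying Theorem~\ref{thm:uniform}, and property A enters here decisively: it supplies the averaging and approximation kernels behind the uniform approximability statements, which bound how far $\alpha$ can distort propagation and thereby allow $\bar\alpha(\cA)$ to be conjugated back onto $\cA$.

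Granting the masa-uniqueness step, surjectivity follows. Writing $\beta=\Ad(u)\circ\bar\alpha$, the restriction $\beta|_{\cA}$ is an automorphism of $\cA\cong\ell_\infty(X\times\N)$, hence is given by a transformation of the index set $X\times\N$. The finite-propagation structure of $\cst(X)$ forces this transformation to be compatible with the metric in the $X$-coordinate in a quantitatively controlled way, so that composing it with the projection $X\times\N\to X$ produces a coarse equivalence $f$ of $X$. The auxiliary factor $\N$ is precisely what allows $f$ to be neither injective nor surjective, which is why the codomain of the duality is $\Coa(X)$ and not $\BijCoa(X)$. A direct comparison then shows that $\beta$ and the automorphism $\alpha_f$ canonically induced by $f$ differ by an inner automorphism of $\BD(X)$, whence $\alpha$ and $\alpha_f$ represent the same class in $\Out(\cst(X))$ and $\alpha$ lies in the image.

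For injectivity, suppose a coarse equivalence $f$ maps to the trivial class, i.e. its induced automorphism equals $\Ad(u)$ for some unitary $u\in\BD(X)$. Then $\Ad(u)$ fixes the class of $\cA$, and the same uniform approximability estimates show that $u$ may be taken approximately diagonal of finite propagation; reading off the induced transformation of $X\times\N$ and projecting to $X$ forces $f$ to be close to $\id_X$, so $f$ is trivial in $\Coa(X)$. I expect the main obstacle to be the masa-uniqueness theorem of the second paragraph: in contrast with the uniform setting, elements of $\cst(X)$ have genuinely compact operator entries on the infinite-dimensional space $H$, so the rigidity of $\cA$ cannot be read off the diagonal alone and must instead be extracted from the interaction between property A and this compact-operator structure --- exactly the role played by the uniform approximability results.
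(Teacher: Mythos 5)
Your proposal is correct and follows essentially the same route as the paper: everything is reduced to the multiplier identification $\BD(X)=\mathcal M(\cst(X))$ (Theorem~\ref{prop:mult}), the injectivity of the canonical map from \S\ref{SubsectionCanonicalMap}, and the Cartan-uniqueness theorem (Theorem~\ref{ThmCartan}, proved as Theorem~\ref{ThmIsoMakeAsympCoarseLike}), which is exactly where property A (in its ONL form) and the uniform approximability machinery do the work. The only organizational difference is that the paper never needs your step of reading a coarse equivalence off the induced permutation of minimal projections of the masa after the fact: inside the proof of Theorem~\ref{ThmIsoMakeAsympCoarseLike} the coarse equivalence $f$ is extracted \emph{first}, from the sets $Y_{p_{x,1},\delta}$ where images of rank-one diagonal projections concentrate, the correcting unitary is $v=uw^*$ with $u$ the canonical unitary built from $f$, and Theorem~\ref{ThmIsoCoa} then obtains surjectivity immediately from $\Ad(u)\circ\Phi^{-1}=\Ad(v)\in\Inn(\BD(X))$; your after-the-fact extraction also works, but needs the additional (and available, via Theorem~\ref{ThmCoarseLike}) argument that the induced bijection of $X\times\N$ is metrically controlled in the $X$-coordinate.
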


Computing $\Coa(X)$ is in general a very difficult task, even for a simple space such as $\Z$. However, using results present in the literature, Theorem \ref{thm:main} gives us some interesting applications. For instance, $\Out(\cst(\Z))$ contains isomorphic copies of Thompson's group $F$ and of the free group of rank continuum (see Corollary \ref{CorThomp}). Using results of Eskin, Fisher and Whyte (\cite{EskinFisherWhyte2012Annals}), and of Farb and Mosher (\cite{FarbMosher1998Inventiones}), we obtain a complete computation of $\Out(\cst(X))$ for solvable Baumslag-Solitar groups, and for lamplighter graphs $F\wr \Z$, where $F$ is a finite group (see Corollaries~\ref{CorBaumslagSolitar} and~\ref{CorLamplighter}).

For our main results (Theorem~\ref{thm:uniform} and~\ref{thm:main}), we assume Yu's property A. This is one of the best known regularity properties in the setting of coarse spaces. It is equivalent to many algebraic and geometric properties such as the non-existence of noncompact ghost operators in $\cstu(X)$ (\cite[Theorem 1.3]{RoeWillett2014}), nuclearity of $\cstu(X)$ (\cite[Theorem 5.5.7]{BrownOzawa}), and the operator norm localisation property, ONL (\cite[Theorem 4.1]{Sako2014}) --- the latter is in fact the formulation of property A we use in our proofs. Property A is fairly broad: for example, all finitely generated exact groups have property A (more precisely, their Cayley graphs, when endowed with the shortest path metric, have property A). In particular, this includes the classes of linear groups, groups with finite asymptotic dimension and amenable groups.

 A key step in the proof of Theorem~\ref{thm:main} is to show `uniqueness of Cartan masas' in Roe algebras, generalising \cite[Theorem E]{WhiteWillett2017} to Roe algebras. In the case of uniform Roe algebras, the canonical Cartan masa of $\cstu(X)$ is $\ell_\infty(X)$. In $\cst(X)$, there are many canonical Cartan masas which depend on the choice of an orthonormal basis of $H$. Precisely, if $\bar\xi=(\xi_n)_n$ is an orthonormal basis of $H$, we obtain a Cartan masa of $\cst(X)$ by considering all operators $a\in \cB(\ell_2(X,H))$ such that for all $x\in X$ there is $(\lambda_n)_n\in c_0$ for which $a(\delta_x\otimes \xi_n)=\lambda_n(\delta_x\otimes \xi_n)$ for all $n\in\N$. This masa is isomorphic to $\ell_\infty(c_0)$ and we denote it by $\ell_\infty(X,\bar \xi)$.

 \begin{theoremi}\label{ThmCartan}
Let $X$ be a uniformly locally finite metric space with property A, and let $\bar\xi$ and $\bar\zeta$ be orthonormal bases of the separable Hilbert space $H$. If $\Phi\in\Aut(\cst(X))$, then there is a unitary $v\in\BD(X)$ such that \[\Ad(v)\circ\Phi(\ell_\infty(X,\bar \xi))=\ell_\infty(X,\bar \zeta).\]
\end{theoremi}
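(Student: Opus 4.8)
The plan is to reduce the statement to the uniform Roe algebra setting, where the analogous uniqueness-of-Cartan-masas result (the generalization of White--Willett's \cite[Theorem E]{WhiteWillett2017} encoded in the proof of Theorem~\ref{thm:uniform}) is already available. The first observation is that the two canonical masas $\ell_\infty(X,\bar\xi)$ and $\ell_\infty(X,\bar\zeta)$ differ only by the choice of orthonormal basis, and switching from $\bar\xi$ to $\bar\zeta$ is implemented by a unitary $w\in\mathcal B(H)$ with $w\xi_n=\zeta_n$ for all $n$. Amplifying, the operator $1_{\ell_2(X)}\otimes w$ lies in $\BD(X)$ (it has propagation $0$, being diagonal over $X$) and conjugation by it carries $\ell_\infty(X,\bar\xi)$ onto $\ell_\infty(X,\bar\zeta)$. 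Hence it suffices to treat the case $\bar\xi=\bar\zeta$, i.e.\ to show that for any $\Phi\in\Aut(\cst(X))$ there is a unitary $v\in\BD(X)$ with $\Ad(v)\circ\Phi(\ell_\infty(X,\bar\xi))=\ell_\infty(X,\bar\xi)$.

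Next I would fix the single basis $\bar\xi$ and regard $\M:=\ell_\infty(X,\bar\xi)\cong\ell_\infty(X,c_0)$ as an abstract Cartan masa of $\cst(X)$, whose image $\Phi(\M)$ is another Cartan masa. The heart of the matter is to prove that every such Cartan masa of $\cst(X)$ is, up to a unitary in the multiplier algebra $\BD(X)$ (which by Theorem~\ref{prop:mult} is exactly the multiplier algebra of $\cst(X)$), conjugate to the standard one $\M$. The strategy is to exploit the relationship between $\cst(X)$ and $\cstu(X)$: the standard masa $\M$ sits over the abelian core $\ell_\infty(X)\subseteq\cstu(X)$, and the White--Willett machinery, via the operator norm localisation property guaranteed by property A, shows that masas of the uniform Roe algebra are rigid. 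I would lift this rigidity from $\cstu(X)$ to $\cst(X)$ using the uniform approximability results for maps between Roe algebras alluded to in the abstract: these let one approximate $\Phi$ on a generating set of finite-propagation operators by finite-propagation data, thereby reducing the rigidity question to a question about the diagonal subalgebra $\ell_\infty(X)$ and its automorphisms, where property A applies directly.

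Concretely, the key steps in order are: (i) reduce to $\bar\xi=\bar\zeta$ via the amplified basis-change unitary as above; (ii) show $\Phi(\M)$ is again a Cartan masa containing a copy of $\ell_\infty(X)$ compatible with the matrix decomposition over $X$; (iii) use property A (through ONL) together with the approximability lemmas to produce, for the induced data on $\ell_\infty(X)$, a coarse-geometric rigidity statement matching a bijection of $X$ close to the identity; (iv) assemble from this bijection, and from the fibrewise $c_0$-structure, a unitary $v\in\BD(X)$ realizing $\Ad(v)\circ\Phi(\M)=\M$; and (v) conjugate by the basis-change unitary from step (i) to land on $\ell_\infty(X,\bar\zeta)$. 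Throughout, one must verify that the unitaries produced have finite (or at least controlled) propagation so as to lie in $\BD(X)$ rather than merely in $\mathcal B(\ell_2(X,H))$.

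I expect the main obstacle to be step (iii)--(iv): transporting the White--Willett uniqueness result from the scalar-valued uniform Roe algebra to the operator-valued Roe algebra, where the fibres are $c_0$ rather than $\C$ and the masa is $\ell_\infty(X,c_0)$ instead of $\ell_\infty(X)$. The difficulty is twofold. First, one must show that an automorphism of $\cst(X)$ sends the standard Cartan masa to something sufficiently close to a diagonal algebra that the finite-propagation approximation can be extracted uniformly in the $X$-coordinate; this is precisely where the uniform approximability theorems for maps between Roe algebras are indispensable, and getting genuinely \emph{uniform} estimates (independent of $x\in X$ and of the fibre index $n$) is the crux. Second, the normaliser structure of a Cartan masa must be used to recover a coarse equivalence of $X$ from purely algebraic data, and one must check that the compact-entry condition defining $\cst(X)$ forces the recovered map to respect the $c_0$-fibration, so that the final unitary $v$ is genuinely band-dominated. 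Once the rigidity is phrased as a statement about $\ell_\infty(X)\subseteq\cstu(X)$, property A via ONL does the geometric work, but the reduction to that statement is the technically demanding part.
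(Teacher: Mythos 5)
Your step (i) is correct and harmless: the basis-change unitary $1_{\ell_2(X)}\otimes w$ (where $w\xi_n=\zeta_n$) has propagation zero, lies in $\BD(X)$, and conjugates $\ell_\infty(X,\bar\xi)$ onto $\ell_\infty(X,\bar\zeta)$, so one may indeed assume $\bar\xi=\bar\zeta$. The problem is that everything after that --- your steps (ii)--(iv) --- \emph{is} the theorem, and the mechanism you propose for it does not exist. There is no way to ``lift'' \cite[Theorem E]{WhiteWillett2017} from $\cstu(X)$ to $\cst(X)$: an automorphism of $\cst(X)$ does not restrict to, nor canonically induce, an automorphism of $\cstu(X)$ (there is no copy of $\cstu(X)$ inside $\cst(X)$ preserved by $\Phi$, and no homomorphism $\Aut(\cst(X))\to\Aut(\cstu(X))$ to feed into Theorem~\ref{thm:WW}), so there is nothing to which the White--Willett theorem can be applied. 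Accordingly, the paper does not argue by reduction at all: in Theorem~\ref{ThmIsoMakeAsympCoarseLike} it re-proves rigidity directly at the Roe-algebra level. Writing $\Phi=\Ad(w)$, it extracts a coarse equivalence $f$ of $X$ from the $\delta$-supports $Y_{p_{x,1},\delta}$ of the images of the rank-one diagonal projections $p_{x,1}=\chi_{\{x\}}\otimes P_1$ (this uses ONL and the coarse-likeness Theorem~\ref{ThmCoarseLike}); it then builds from $f$ the permutation-type unitary $u$ of \S\ref{SubsectionCanonicalMap}, which carries $\ell_\infty(X,\bar\xi)$ into $\ell_\infty(X,\bar\zeta)$ by construction; and the entire technical weight of the proof goes into showing that the correction unitary $v=uw^*$ is \emph{quasi-local}, whence $v\in\BD(X)$ by \cite[Theorem 3.3]{SpakulaZhang2018} --- the second, and decisive, use of property A. Your proposal invokes ``uniform approximability'' in the right spirit (Theorem~\ref{ThmCoarseLike} is indeed a key tool), but as a bridge to the uniform Roe algebra statement, which is a bridge to nowhere; you yourself flag this reduction as ``the technically demanding part,'' and it is precisely the part left blank.

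There is also a concrete error in step (iii): the bijection/coarse equivalence recovered from $\Phi$ is \emph{not} close to the identity in general, and cannot be arranged to be. If it always were, then every $\Phi\in\Aut(\cst(X))$ would lie in $\Inn(\BD(X))$, i.e.\ $\Out(\cst(X))$ would be trivial; this contradicts Theorem~\ref{thm:main}, since already for $X=\Z$ the flip $n\mapsto -n$ induces an automorphism of $\cst(\Z)$ whose associated coarse class is nontrivial. (The same phenomenon occurs in the uniform case: in the proof of Theorem~\ref{ThmIsoCoarseAutURA}, the bijection $f$ with $v\delta_x=\lambda_x\delta_{f(x)}$ is an arbitrary bijective coarse equivalence.) What the argument actually requires is not a bijection of $X$ close to the identity, but a bijection $g$ of $X\times\N$ covering an \emph{arbitrary} coarse equivalence $f$ of $X$, together with the verification that the unitary comparing the permutation automorphism $\Ad(u_g)$ with $\Phi$ is band-dominated. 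Controlling the propagation of $uw^*$ --- rather than of any map induced by $\Phi$ on $X$ --- is the actual crux, and it is absent from the proposal.
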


Theorem \ref{ThmCartan} is proven below as Theorem \ref{ThmIsoMakeAsympCoarseLike}. We point out that this result is actually stronger: the unitary $v\in \BD(X)$ is chosen so that $\Ad(v)\circ\Phi:\cst(X)\to \cst(X)$ respects the coarse geometry of $X$ in a very strong sense (see Definition \ref{DefiCoarseLikeProp} and Theorem \ref{ThmIsoMakeAsympCoarseLike} for details). 

To prove Theorem~\ref{ThmCartan}, and consequently Theorem~\ref{thm:main}, we state and prove uniform approximability results for Roe algebras. These results ensure that certain maps between Roe algebras respect in some sense the coarse geometry of the metric spaces. 
The concept of uniform approximability was introduced in \cite{BragaFarah2018}, and then further developed in \cite{BragaFarahVignati2019} and \cite{BragaFarahVignati2018}, for maps between uniform Roe algebras. These tools were already applied for a better understanding of uniform Roe algebras (e.g., \cite{LorentzWillett2020,WhiteWillett2017}), and we believe our generalisations can be the key to a further development of the theory of Roe algebras.
 
The paper is structured as follows: \S\ref{SecPrelim} contains preliminaries and sets the notation. Our uniform approximability results are proven in \S\ref{SecUnifApprox}, and applied in \S\ref{SecMult} and \S\ref{SectionMakingIsoCoarseLike}, where our main theorems are proved. \S\ref{SecApp} is dedicated to some applications.

\section{Notation and preliminaries}\label{SecPrelim}
If $H$ is a Hilbert space, $\cB(H)$ denotes the space of bounded operators on $H$ and $\cK(H)$ its ideal of compact operators. We denote the identity of $\mathcal B(H)$ by $1_H$. Given a set $X$, we denote by $\ell_2(X,H)$ the Hilbert space of square-summable functions $X\to H$. If $x\in X$ and $\eta\in H$, $\delta_x\otimes \eta$ is the function that sends $x$ to $\eta$ and all other elements of $X$ to $0$. Elements in $\mathcal B(\ell_2(X,H))$ can be viewed as (bounded) $X\times X$ matrices where each entry is an operator in $\mathcal B(H)$. With this identification, given $a\in \cB(\ell_2(X,H))$, and $x,y \in X$, we denote by $a_{xy}$ the operator in $\cB(H)$ determined by 
\[
\langle a_{xy}\xi,\eta\rangle=\langle a(\delta_x\otimes\xi),\delta_y\otimes\eta\rangle, \text{ for all } \xi,\eta\in H.\]
Multiplication in $\mathcal B(\ell_2(X,H))$ corresponds to matrix multiplication, that is,
\[
(ab)_{xy}=\sum_{z\in X}a_{xz}b_{zy}.
\]

We make the following abuse of notation throughout this article: given a metric space $X$ and a Hilbert space $H$, we write $\chi_{C}$ to denote both 
\begin{enumerate}
\item the projection in $\ell_\infty(X)\subset \cB(\ell_2(X))$ defined by $\chi_C\delta_x=\delta_x$ if $x\in C$ and $\chi_C\delta_x=0$ if $x\not\in C$, and
\item the projection in $\ell_\infty(X,\cB(H))$ defined by $\chi_C\delta_x\otimes \xi=\delta_x\otimes \xi$ if $x\in C$ and $\chi_C\delta_x\otimes \xi=0$ if $x\not\in C$. 
\end{enumerate}
If $\chi_C$ denotes a projection in $ \ell_\infty(X)$, when considering elements of $\cB(\ell_2(X,H))$, $\chi_C$ will always be accompanied by an operator on $H$ --- for instance: $\chi_C\otimes p$, where $p\in \cB(H)$. Given $a\in \cB(\ell_2(X,H))$ and $x,y\in X$, the operator $a_{xy}$ can be identified with the standard restriction of $\chi_{\{x\}}a\chi_{\{y\}}$ to an operator in $\cB(H)$. 

Let $(X,d)$ be a metric space. For $a\in\mathcal B(\ell_2(X,H))$, the \emph{support of $a$} is defined by $\supp(a)=\{(x,y)\in X^2\mid a_{xy}\neq 0\}$ and its \emph{propagation} by \[\propg(a)=\sup\{d(x,y)\mid (x,y)\in \supp(a)\}.\] If $H$ is infinite-dimensional and separable, we construct the \emph{band-dominated algebra of $X$}, denoted $\BD(X)$, by letting 
\[
\BD(X)=\overline{\{a\in\mathcal B(\ell_2(X,H))\mid \propg(a) \text{ is finite} \}}.
\]
The \emph{Roe algebra} is the ideal of $\BD(X)$ given by \emph{locally compact} operators i.e., $a_{xy}$ is compact for all $x,y\in X$. So 
\[
\cst(X)=\{a\in \BD(X) \mid \forall x,y\in X, \ a_{xy}\in\mathcal K(H)\}.
\]

If $H=\mathbb C$, all band-dominated operators are locally compact, as $\mathcal K(H)=\cB(H)$. In this case, the algebra of band-dominated operators is called the \emph{uniform Roe algebra of $X$} and denoted by $\cstu(X)$. 

The algebra $\cst(X)$ is not unital, but it has a well-behaved approximate identity if $X$ is uniformly locally finite. A metric space $(X,d)$ is \emph{uniformly locally finite} (\emph{u.l.f.}) if \[\sup_{x\in X}|B_r(x)|<\infty\ \text{ for all }\ r>0,\] where $|B_r(x)|$ denotes the cardinality of the closed ball of radius $r$ centered at $x$. Let $(p_j)_j$ be a sequence of finite rank projections on $H$ which converges to the identity $1_H$ in the strong operator topology. Given a function $f\colon X\to\N$, let \[q_f=\SOTh\sum_{x\in X}\chi_{\{x\}}\otimes p_{f(x)}.\] Each $q_f$ is in $\cst(X)$, and $q_f\leq q_g$ if and only if for all $x\in X$ we have $f(x)\leq g(x)$.
 
\begin{proposition}\label{prop:approxid}
Let $X$ be u.l.f.\@ metric space. The net $\{q_f\mid f\colon X\to\N\}$ is an approximate identity for $\cst(X)$ consisting of projections.
\end{proposition}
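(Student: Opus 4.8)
The plan is to prove that $\{q_f \mid f \colon X \to \N\}$ is a net of projections directed upward (under the pointwise order on functions $X \to \N$, which by the stated equivalence $q_f \le q_g \iff f \le g$ pointwise corresponds to the usual order on the $q_f$), that each $q_f$ lies in $\cst(X)$, and finally that for every $a \in \cst(X)$ we have $\|q_f a q_f - a\| \to 0$ along this net (equivalently $\|a q_f - a\| \to 0$ and $\|q_f a - a\| \to 0$, using that $q_f$ are projections and taking adjoints). Since $\cst(X)$ is a $\cst$-algebra and each $q_f$ is a self-adjoint idempotent, directedness plus this approximation is exactly the assertion that $\{q_f\}$ is an approximate identity consisting of projections.

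First I would verify membership and the projection property. Each $q_f = \SOTh\sum_x \chi_{\{x\}} \otimes p_{f(x)}$ is a block-diagonal operator: in the $X \times X$ matrix picture its $(x,x)$ entry is the finite-rank projection $p_{f(x)} \in \cK(H)$ and all off-diagonal entries vanish, so $q_f$ has propagation $0$ and locally compact entries, placing it in $\cst(X)$; being an SOT-limit of mutually orthogonal finite-rank projections, it is itself a projection. The directedness is immediate: given $f, g$, the pointwise maximum $h = \max(f,g)$ satisfies $q_f, q_g \le q_h$, so the index set is directed and the net is well defined.

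The main work, and the main obstacle, is the approximation $\|a q_f - a\| \to 0$. I would first reduce to finite-propagation operators: if the claim holds for every finite-propagation $a \in \cst(X)$ with a uniform bound, a standard $\e/3$-argument extends it to the norm closure, since the $q_f$ are contractions. So fix $a$ of finite propagation $r$ and locally compact entries. The heuristic is that $q_f$ acts as the identity on the range of $p_{f(x)}$ in each fiber over $x$, and since each entry $a_{xy}$ is compact it is well approximated on the relevant finite-rank pieces; the u.l.f.\ hypothesis is what makes this approximation \emph{uniform} over $x,y$. Concretely, for a fixed target $\e > 0$ I would, using compactness of each $a_{xy}$ together with u.l.f.\ (which bounds, for each $x$, the number of $y$ with $a_{xy} \neq 0$ by some $N_r = \sup_x |B_r(x)| < \infty$), produce a single function $f_0 \colon X \to \N$ such that $\|p_{f_0(x)} a_{xy} - a_{xy}\| < \e / N_r$ for all pairs $(x,y) \in \supp(a)$; then for any $f \ge f_0$ one controls $\|(1 - q_f) a\|$ by a Schur-type / row-and-column estimate exploiting the finite band structure. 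The delicate point to get right is that finitely many compact entries per row does not by itself give a uniform choice of $f_0$ across the \emph{infinitely many} values of $x$, so one genuinely needs that $a$ has finite propagation and that $\|a_{xy}\| \le \|a\|$ is uniformly bounded, combined with u.l.f., to extract such an $f_0$; this is where I expect the real care to be required rather than in any formal manipulation.

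Finally, having established $\|a q_f - a\| \to 0$ for all $a \in \cst(X)$, the one-sided estimate on the other side follows by applying this to $a^*$ and taking adjoints, since $q_f^* = q_f$ gives $\|q_f a - a\| = \|a^* q_f - a^*\|$. Together with directedness and the projection property this completes the verification that $\{q_f\}$ is an approximate identity for $\cst(X)$ consisting of projections.
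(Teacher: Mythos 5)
Your proof is correct, but the core estimate runs along a genuinely different line from the paper's. The paper never works entry by entry: it partitions $X$ into finite sets $(A_n)_n$ so that only consecutive sets interact within the band of width $r$, splits $a=a_0+a_1$ into two block-diagonal parts over the even- and odd-indexed sets, and approximates each block $\chi_{A_{2i}}a_0\chi_{A_{2i}}$ --- a genuinely compact operator, being a finite matrix of compact entries --- in one stroke by $(\chi_{A_{2i}}\otimes p_{n(i)})a_0$; since the blocks are mutually orthogonal, $\|q_{f_0}a_0-a_0\|$ is simply the supremum of the blockwise errors, so no summation over entries and no Schur-type bound ever appears. You instead choose $f_0(x)$ row by row, using compactness of the at most $N_r$ nonzero entries in each row, and assemble the entrywise bound $\e/N_r$ into an operator-norm bound via a Schur test (equivalently, a decomposition of the band into at most $N_r$ partial translations); since both rows and columns of $(1-q_f)a$ have at most $N_r$ nonzero entries, this does yield $\|(1-q_f)a\|\le\e$, and your use of $p_{f_0(x)}\le p_{f(x)}$ for $f\ge f_0$ is the same monotonicity trick as the paper's identity $q_fq_{f_0}=q_{f_0}$. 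What each route buys: yours avoids having to construct the partition $(A_n)_n$, which the paper asserts exists but does not build (one needs, e.g., annuli of width greater than $r$ around a basepoint), at the price of the $\e/N_r$ bookkeeping and the Schur estimate; the paper's two-coloured block decomposition keeps the error additive (a clean $4\e$) with no constants depending on $N_r$. One inaccuracy in your commentary, which does not affect the argument: no uniformity of $f_0$ across $x$ is needed (nor available) --- $f_0$ is allowed to be an arbitrary, unbounded function, and for each fixed $x$ only finitely many conditions must be met; the uniform bound $\|a_{xy}\|\le\|a\|$ plays no role. Finally, two caveats your proof shares with the paper's, so they do not count against you: both arguments treat only finite-propagation elements of $\cst(X)$ and hence rely on the unstated density of such elements in $\cst(X)$ as defined (the locally compact part of $\BD(X)$) --- your explicit $\e/3$ reduction invokes exactly the assumption the paper leaves silent --- and both need the sequence $(p_j)_j$ to be increasing, which the paper never says but which is forced by its claim that $q_f\le q_g$ if and only if $f\le g$ pointwise.
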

\begin{proof}
Pick $a\in\cst(X)$ with $\propg(a)\leq r$, and let $\e>0$. Since $X$ is u.l.f.\@ and $r$ is fixed, we can find $a_0$ and $a_1$ in $\cst(X)$ with propagation at most $ r$ and finite sets $A_n\subseteq X$, for $n\in\N$, such that
\begin{itemize}
\item $a=a_0+a_1$, 
\item $\supp(a_0)\cap \supp(a_1)=\emptyset$,
\item $a_0=\sum_{n }\chi_{A_{2n}}a_0\chi_{A_{2n}}$ and $a_1=\sum_{n }\chi_{A_{2n+1}}a_1\chi_{A_{2n+1}}$, and
\item if $|i-j|\geq 2$ then $A_i\cap A_j=\emptyset$.
\end{itemize}
Since each $A_i$ is finite and each entry of $\chi_{A_{2i}}a_0$ is compact, there is a sequence of natural numbers $(n(i))_i$ such that \[\norm{(\chi_{A_{2i}}\otimes p_{n(i)}) a_0-\chi_{A_{2i}}a_0}<\e\] for all $i\in\N$. Define $f_0\colon X\to \N$ by letting $f_0(x)=n(i)$ if $x\in A_{2i}$, and $f_0(x)=0$ elsewhere. Then $\norm{q_{f_0}a_0-a_0}<\e$. Notice that if $f_0\leq f$, then $q_fq_{f_0}=g_{f_0}$, hence 
\[
\norm{q_fa_0-a_0}\leq\norm{q_fa_0-q_fq_{f_0}a_0}+\norm{q_fq_{f_0}a_0-a_0}\leq2\e.
\]
One can analogously define $f_1\colon X\to X$ such that $\norm{q_{f_1}a_1-a_1}<\e$. Then $\norm{q_ga-a}\leq 4\e$
for all $g\colon X\to N$ with $g\geq {\max\{f_0,f_1\}}$.
Since $\e$ was arbitrary, we are done.
\end{proof}
By its definition, the Roe algebra $\cst(X)$ is an essential ideal in $\BD(X)$, hence $\BD(X)\subseteq\mathcal M(\cst(X))$ (see e.g., \cite[II.7.3.5]{Blackadar.OA}). We will prove that this is an equality in Theorem~\ref{prop:mult}.

%




\subsection{Coarse geometry}
In the setting of coarse geometry, homeomorphisms are replaced by maps preserving the large-scale geometry. This is a crucial definition we will be using in the whole paper.
\begin{definition}\label{def:close}
Let $(X,d)$ and $(Y,\partial)$ be metric spaces, and let $f$ and $g$ be functions $X\to Y$. The map $f$ is called \emph{coarse} if for all $r>0$ there is $s>0$ so that 
\[d(x,y)<r\text{ implies }\partial (f(x),f(y))<s.\]
We say that $f$ and $g$ are \emph{close} if
\[
\sup_x\partial(f(x),g(x))<\infty.
\]
The map $f$ is called a \emph{coarse equivalence} if it is coarse and there exists a coarse map $h\colon Y\to X$ so that $f\circ h$ and $h\circ f$ are close to $\mathrm{Id}_Y$ and $\mathrm{Id}_X$, respectively.
\end{definition}

Notice that a coarse equivalence $f\colon X\to Y$ is automatically \emph{cobounded}, i.e., $\sup_{y\in Y}\partial(y,f(X))<\infty$.

Throughout the paper, every time $X$ and $Y$ are metric spaces, we will assume without further notice that $d$ and $\partial$ are the metrics of $X$ and $Y$, respectively. 

\subsection{The canonical maps}\label{SubsectionCanonicalMap}
We now present the construction of the canonical map associating an automorphism of $\cstu(X)$ to a bijective coarse equivalence of $X$. We then prove Theorem~\ref{thm:uniform}. Finally, we associate an automorphism of $\cst(X)$ to a coarse equivalence of $X$. Although such an association is highly noncanonical we prove that it becomes canonical when $\Aut(\cst(X))$ is replaced by $\Out(\cst(X))$.

Let $f\colon X\to X$ be a bijective coarse equivalence. Consider the unitary $v_f\in \cB(\ell_2(X))$ given by $v_f\delta_x=\delta_{f(x)}$ for all $x\in X$. Since $f$ and its inverse are coarse, $\Ad(v_f)$ is an automorphism of $\cstu(X)$. 
If $f$ and $g$ are bijections, then $v_{f\circ g}=v_fv_g$ and $v_{f^{-1}}=v_f^*$. Moreover, $v_f\in\cstu(X)$ if and only if $f$ is close to the identity; this gives an injective homomorphism $\BijCoa(X)\to\Out(\cstu(X))$. The proof of Theorem~\ref{thm:uniform} amounts then to show surjectivity in case property A is assumed. To do so, we recall the work of White and Willett, who proved uniqueness of Cartan masas in uniform Roe algebras for property A spaces. We slightly restate their result.
\begin{theorem}[Theorem E of \cite{WhiteWillett2017}]\label{thm:WW}
Let $X$ be a u.l.f.\@ metric space with property $A$. Let $\Phi\in\Aut(\cstu(X))$. Then there is a unitary $u\in\cstu(X)$ such that $\Ad(u)\circ\Phi(\ell_\infty(X))=\ell_\infty(X)$.
\end{theorem}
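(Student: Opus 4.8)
The plan is to reconstruct a bijection of $X$ from the abstract automorphism $\Phi$ and then conjugate by the associated change-of-basis unitary. First, since $\ell_\infty(X)\subseteq\cstu(X)$ and $\chi_{\{x\}}\cstu(X)\chi_{\{x\}}=\C\chi_{\{x\}}$, each $\chi_{\{x\}}$ is a minimal projection of $\cstu(X)$, and hence so is $p_x:=\Phi(\chi_{\{x\}})$. Any minimal projection $p$ of $\cstu(X)$ is rank one in $\cB(\ell_2(X))$: for every $x$ the compression $p\chi_{\{x\}}p=(p\delta_x)(p\delta_x)^*$ has rank $\le 1$ and must be a scalar multiple of $p$, so either $p$ is rank one or $p\delta_x=0$ for all $x$, the latter forcing $p=0$. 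Thus $p_x=\xi_x\xi_x^*$ for unit vectors $\xi_x$, pairwise orthogonal because the $\chi_{\{x\}}$ are, and $(\xi_x)_{x\in X}$ is an orthonormal basis of $\ell_2(X)$ (their ranges are the atoms of the masa $\Phi(\ell_\infty(X))$ and span a subspace reducing $\cstu(X)$ and containing every atom, which forces it to be all of $\ell_2(X)$). It then suffices to produce a bijection $f\colon X\to X$ and a unitary $u\in\cstu(X)$ with $u\xi_x=\delta_{f(x)}$ for all $x$.

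The heart of the argument, and the step where property A is indispensable, is to show that the $\xi_x$ are \emph{uniformly localized}: for every $\e>0$ there is $r>0$, independent of $x$, and a point $f(x)\in X$ with $\norm{\xi_x-\chi_{B_r(f(x))}\xi_x}<\e$. I would obtain this from the operator norm localisation property of \cite{Sako2014}, the formulation of property A used throughout, together with the uniform approximability machinery of \cite{BragaFarah2018}: an isomorphism between uniform Roe algebras of ONL spaces is approximately propagation-preserving in a manner uniform over the algebra, so the image $p_x$ of the zero-propagation projection $\chi_{\{x\}}$ is $\e$-close, uniformly in $x$, to an operator of propagation at most some $r=r(\e)$. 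Since $\norm{(1-\chi_B)\xi_x}=\norm{(1-\chi_B)p_x}$ for every diagonal projection $\chi_B$, and a rank-one projection lying $\e$-close to an operator of propagation $\le r$ cannot have its range vector spread across two regions more than $r$ apart, $\xi_x$ is confined to a single ball $B_r(f(x))$ up to an $\e$-error. I expect this localization to be the main obstacle: it is exactly what can fail without a regularity hypothesis, and it is what makes $\Phi(\ell_\infty(X))$ conjugate to $\ell_\infty(X)$ by a \emph{finite-propagation} unitary rather than merely by some unitary of $\cB(\ell_2(X))$.

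Granting the localization, the centre map $f$ is a coarse equivalence. Coarseness follows by transporting normalizing partial isometries: when $d(x,x')$ is small there is a finite-propagation partial isometry in $\cstu(X)$ carrying $\chi_{\{x\}}$ to $\chi_{\{x'\}}$, and applying $\Phi$ keeps its propagation controlled, so $d(f(x),f(x'))$ stays uniformly bounded; the same argument for $\Phi^{-1}$ yields a coarse inverse. Because $X$ is uniformly locally finite, a Hall-type marriage argument corrects $f$ to an honest bijection of $X$ that remains close to the original centre map, so the uniform localization persists with $f$ bijective. Bijectivity is used crucially in the final step.

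Finally I would set $u=\SOTh\sum_x\delta_{f(x)}\xi_x^*$, a unitary of $\ell_2(X)$ since both $(\xi_x)$ and $(\delta_{f(x)})$ are orthonormal bases. Its matrix entry $u_{yz}=\overline{\xi_{f^{-1}(y)}(z)}$ is, by the uniform localization, negligible once $d(y,z)>r(\e)$, so $u$ is a norm-limit of finite-propagation operators and hence lies in $\cstu(X)$. By construction $\Ad(u)\circ\Phi(\chi_{\{x\}})=up_xu^*=\chi_{\{f(x)\}}$. Writing $\Psi=\Ad(u)\circ\Phi$, the algebra $\Psi(\ell_\infty(X))$ commutes with every $\Psi(\chi_{\{x\}})=\chi_{\{f(x)\}}$; as $f$ is a bijection these exhaust the diagonal atoms, whose commutant in $\cB(\ell_2(X))$ is exactly $\ell_\infty(X)$, whence $\Psi(\ell_\infty(X))\subseteq\ell_\infty(X)$. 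Applying the same reasoning to $\Psi^{-1}$ gives the reverse inclusion, yielding $\Ad(u)\circ\Phi(\ell_\infty(X))=\ell_\infty(X)$, as desired.
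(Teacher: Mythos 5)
A preliminary remark: the paper does not prove this statement at all --- it imports it verbatim as Theorem E of \cite{WhiteWillett2017} --- so your attempt can only be measured against the known proofs (White--Willett's, and the \v{S}pakula--Willett rigidity argument it refines). Your outline does reproduce that architecture: the images $p_x=\Phi(\chi_{\{x\}})$ of the diagonal atoms are rank-one projections $\xi_x\xi_x^*$, one localizes the vectors $\xi_x$ uniformly, reads off a coarse map $f$, corrects it to a bijection, and conjugates by $u=\SOTh\sum_x\delta_{f(x)}\xi_x^*$. Your algebraic bookends are correct: minimal projections of $\cstu(X)$ are indeed rank one, and once $f$ is a bijection and $u\xi_x=\delta_{f(x)}$, the commutant argument does give $\Ad(u)\circ\Phi(\ell_\infty(X))=\ell_\infty(X)$. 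The problems are at the two analytic cruxes.

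First, the localization step. What $\e$-$r$-approximability of the rank-one projection $p_x=\xi_x\xi_x^*$ gives is exactly the pair condition $\norm{\chi_A\xi_x}\cdot\norm{\chi_{A'}\xi_x}\le\e$ whenever $d(A,A')>r$; your inference ``cannot be spread across two regions more than $r$ apart, hence confined to a single ball up to $\e$'' does not follow from it. Concretely, let $X$ be a coarse disjoint union of expander graphs (a u.l.f.\@ space) and $\xi$ the normalized constant vector on one of the blocks: expansion yields the pair condition with $r=O(\log(1/\e))$, and the block-diagonal sum of these rank-one projections is the ghost projection, which lies in $\cstu(X)$ because the Laplacian has finite propagation and a uniform spectral gap --- so these projections are even uniformly $\e$-$r(\e)$-approximable --- yet $\xi$ is spread over an arbitrarily large set. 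This is precisely the phenomenon property A must exclude, and the deduction you actually write down never uses ONL (you cite it, but the mechanism you describe is the pair condition, which is too weak). The correct mechanism, as in Lemmas~\ref{Lemma1} and~\ref{Lemma2} of this paper (in the Roe setting) and in \cite{WhiteWillett2017}, is: take the propagation-$\le r$ approximant $b$ of $p_x$, apply ONL to $b$ to obtain a unit vector $\eta$ with $\diam(\supp(\eta))\le t(r,\e)$ and $\norm{b\eta}\ge(1-\e)\norm{b}$, deduce $|\langle\xi_x,\eta\rangle|\ge 1-3\e$, and conclude that $\xi_x$ is norm-close to the localized vector $\eta$.

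Second, the membership $u\in\cstu(X)$. Entrywise smallness off a band does not imply being a norm limit of finite propagation operators: the block-diagonal sum along $\Z$ of $N\times N$ discrete Fourier transform matrices has every off-band entry of modulus at most $1/\sqrt{N}$, yet is at distance tending to $1$ from every finite propagation operator. Even upgrading to the vector localization, the natural banded candidate $u_r=\SOTh\sum_x\delta_{f(x)}(\chi_{B_r(f(x))}\xi_x)^*$ satisfies $\norm{u-u_r}^2=\norm{G}$ where $G$ is the Gram matrix of the tails $(1-\chi_{B_r(f(x))})\xi_x$, and per-vector smallness of the tails does not control $\norm{G}$: one needs estimates uniform over the whole family $(\xi_x)_{x\in X}$ simultaneously. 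That is exactly why the Roe-algebra analogue in this paper (Theorem~\ref{ThmIsoMakeAsympCoarseLike}) introduces the families $\cD_{t,r,k}$ and Lemma~\ref{Lemma3}, proves quasi-locality of the conjugating unitary, and then invokes \cite{SpakulaZhang2018} together with property A; some such family-level argument is needed to close your proof as well. Finally, a smaller gap: uniform local finiteness alone does not let one correct a coarse equivalence to a close bijection ($n\mapsto\floor{n/2}$ on $\Z$ is close to no bijection), so the Hall condition must be extracted from the orthonormality of the $(\xi_x)$ --- a dimension count inside $\ell_2\big(\bigcup_{x\in F}B_R(f(x))\big)$ for finite $F$ --- applied to both $\Phi$ and $\Phi^{-1}$; this is available in your setup but has to be said.
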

We restate Theorem~\ref{thm:uniform} for convenience.
\begin{theorem}\label{ThmIsoCoarseAutURA}
Let $X$ be a u.l.f.\@ metric space with property A. The canonical map 
\[
\BijCoa(X)\to \Out(\cstu(X))
\]
is a group isomorphism.
\end{theorem}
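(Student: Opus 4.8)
The map $\BijCoa(X)\to\Out(\cstu(X))$ is already known to be an injective group homomorphism from the discussion in \S\ref{SubsectionCanonicalMap} (recall $v_f\in\cstu(X)$ if and only if $f$ is close to $\id_X$), so the plan is to prove only surjectivity, and this is the single point where property A enters, through Theorem~\ref{thm:WW}. Fix $\Phi\in\Aut(\cstu(X))$. By Theorem~\ref{thm:WW} there is a unitary $u\in\cstu(X)$ such that $\Psi:=\Ad(u)\circ\Phi$ satisfies $\Psi(\ell_\infty(X))=\ell_\infty(X)$. Since $\Ad(u)$ is inner we have $[\Psi]=[\Phi]$ in $\Out(\cstu(X))$, so it suffices to produce a bijective coarse equivalence $f$ with $[\Psi]=[\Ad(v_f)]$.

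For $x,y\in X$ write $e_{xy}$ for the matrix unit determined by $e_{xy}\delta_y=\delta_x$ and $e_{xy}\delta_z=0$ for $z\neq y$; since $\propg(e_{xy})=d(x,y)<\infty$ we have $e_{xy}\in\cstu(X)$, and $\chi_{\{x\}}=e_{xx}$. The $\chi_{\{x\}}$ are exactly the minimal projections of $\ell_\infty(X)$, so $\Psi\rs\ell_\infty(X)$ permutes them, yielding a bijection $f\colon X\to X$ with $\Psi(e_{xx})=e_{f(x)f(x)}$. Then $\Psi(e_{xy})$ is a norm-one partial isometry in the corner $e_{f(x)f(x)}\cstu(X)e_{f(y)f(y)}=\mathbb{C}\,e_{f(x)f(y)}$, so $\Psi(e_{xy})=\lambda_{xy}e_{f(x)f(y)}$ for some $\lambda_{xy}\in\mathbb{T}$. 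Applying $\Psi$ to $e_{xy}e_{yz}=e_{xz}$ gives the cocycle identity $\lambda_{xy}\lambda_{yz}=\lambda_{xz}$; fixing a basepoint $x_0$ and setting $c_{f(x)}=\lambda_{x_0 x}$ defines a diagonal unitary $w=\SOTh\sum_z c_z e_{zz}\in\ell_\infty(X)$ for which $\Theta:=\Ad(w)\circ\Psi$ sends $e_{xy}\mapsto e_{f(x)f(y)}$ for all $x,y$. A direct matrix-entry computation (using $e_{xx}a e_{yy}=a_{xy}e_{xy}$) shows that any automorphism fixing every $e_{xy}$ must be the identity on matrix entries, hence the identity; since $\Ad(v_f)$ also sends $e_{xy}\mapsto e_{f(x)f(y)}$ it follows that $\Theta=\Ad(v_f)$. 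As $\Ad(w)$ is inner, $[\Phi]=[\Psi]=[\Theta]=[\Ad(v_f)]$.

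It remains to verify that $f$ is a bijective coarse equivalence. The crucial point is that $\Theta=\Ad(v_f)$ is an automorphism of $\cstu(X)$, so $v_f\,\cstu(X)\,v_f^*=\cstu(X)$. If $f$ were not coarse, there would be $r>0$ and pairs $(x_n,y_n)$ with $d(x_n,y_n)\le r$ but $d(f(x_n),f(y_n))\to\infty$; using that $X$ is u.l.f.\ we may pass to a subsequence making the $x_n$ pairwise distinct and the $y_n$ pairwise distinct. Then $S=\SOTh\sum_n e_{x_ny_n}$ is a partial isometry with $\propg(S)\le r$, hence $S\in\cstu(X)$, whereas $v_fSv_f^*=\SOTh\sum_n e_{f(x_n)f(y_n)}$ has matrix entry $1$ at each $(f(x_n),f(y_n))$ with $d(f(x_n),f(y_n))\to\infty$. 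Since every matrix entry is dominated by the operator norm, $v_fSv_f^*$ is at distance $\ge 1$ from every finite-propagation operator, hence $v_fSv_f^*\notin\cstu(X)$, contradicting $\Theta(S)\in\cstu(X)$. Thus $f$ is coarse, and the same argument applied to $\Theta^{-1}=\Ad(v_{f^{-1}})$ shows $f^{-1}$ is coarse; therefore $f\in\BijCoa(X)$ and maps to $[\Phi]$, proving surjectivity.

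The only substantive input is Theorem~\ref{thm:WW}, which is exactly where property A is used; everything afterward is elementary operator algebra. The step I expect to demand the most care is the final one---verifying coarseness in both directions---since it is what genuinely transports the algebraic normalisation of $\ell_\infty(X)$ back into the large-scale geometry of $X$. By contrast, the cocycle trivialisation carries no cohomological obstruction, precisely because every pair $x,y$ is joined by a matrix unit $e_{xy}\in\cstu(X)$, so the system $(\lambda_{xy})$ is globally a coboundary.
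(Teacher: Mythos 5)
Your proof is correct, and its skeleton coincides with the paper's: both use Theorem~\ref{thm:WW} to replace $\Phi$ by $\Psi=\Ad(u)\circ\Phi$ preserving $\ell_\infty(X)$, and both quote injectivity from \S\ref{SubsectionCanonicalMap}. The difference lies in how $\Psi$ is then identified with $\Ad(v_f)$ modulo $\Inn(\cstu(X))$. The paper does this by citation: every automorphism of $\cstu(X)$ is spatially implemented by a unitary $v\in\cB(\ell_2(X))$ (\cite[Lemma 3.1]{SpakulaWillett2013AdvMath}), and any unitary with $v\ell_\infty(X)v^*=\ell_\infty(X)$ must have the form $v\delta_x=\lambda_x\delta_{f(x)}$ for a bijection $f$ and unimodular scalars (\cite[Lemma 8.10]{BragaFarah2018}); the conclusion is then immediate since $vv_f^*$ is a diagonal unitary. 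You avoid spatial implementation entirely: you read off $f$ from the permutation of the minimal projections of $\ell_\infty(X)$, deduce $\Psi(e_{xy})=\lambda_{xy}e_{f(x)f(y)}$ from the one-dimensionality of the corners $e_{f(x)f(x)}\cstu(X)e_{f(y)f(y)}$, and then trivialize the cocycle $(\lambda_{xy})$ by a diagonal unitary $w\in\ell_\infty(X)$, after which the matrix-entry computation forces $\Ad(w)\circ\Psi=\Ad(v_f)$ on all of $\cstu(X)$. You also make explicit a point the paper leaves implicit in its references, namely that $f$ and $f^{-1}$ are coarse; your obstruction argument with the finite-propagation partial isometry $S=\SOTh\sum_n e_{x_ny_n}$, whose conjugate by $v_f$ would otherwise have uniformly large matrix entries arbitrarily far from the diagonal, is a clean and correct way to do this, and the passage to pairwise distinct $x_n$ and $y_n$ is exactly where uniform local finiteness is needed. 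The trade-off: the paper's proof is shorter but leans on two external lemmas, while yours is self-contained past Theorem~\ref{thm:WW} and records precisely where the algebraic normalisation is converted back into large-scale geometry.
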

\begin{proof}
If $\Phi \in \Aut(\cstu(X))$, then Theorem \ref{thm:WW} gives a unitary $u \in \cstu(X)$ so that $\Psi=\Ad(u)\circ \Phi$ is an automorphism of $\cstu(X)$ which takes $\ell_\infty(X)$ to itself. As every automorphism of $\cstu(X)$ is implemented by a unitary in $\cB(\ell_2(X))$ (see \cite[Lemma 3.1]{SpakulaWillett2013AdvMath}), let $v$ be this unitary, i.e., $\Psi=\Ad(v)$. As $v\ell_\infty(X)v*=\ell_\infty(X)$, there is a bijection $f\colon X\to X$ and a family $(\lambda_x)_{x\in X}$ in the unit circle of $\mathbb C$ so that $v\delta_x=\lambda_x\delta_{f(x)}$ for all $x\in X$ (see \cite[Lemma 8.10]{BragaFarah2018} for a proof of that). Hence, $\Ad(v_f)$ equals $\Psi$ modulo $\Inn(\cstu(X))$, which in turn, as $u\in \cstu(X)$, equals $\Phi$ modulo $\Inn(\cstu(X))$. 
\end{proof}
We now present a map which associates to a coarse equivalence of $X$ an automorphism of $\cst(X)$. This construction is well known to specialists but, as we use its specifics in the proof of Theorem~\ref{thm:main}, we give its details in full.

Fix metric spaces $X$ and $Y$, two orthonormal bases of $H$, $\bar\xi=(\xi_n)_n$ and $\bar\zeta=(\zeta_n)_n$, and let $f\colon X\to Y$ be a coarse equivalence. Let $Y_0=f[X]$. For each $y\in Y_0$, pick $x_y\in X$ such that $f(x_y)=y$, and let $X_0=\{x_y\mid y\in Y_0\}$. Since $f$ is 	a coarse equivalence, $X_0$ and $Y_0$ are cobounded in $X$ and $Y$, respectively. By the last statement, we can pick sequences of disjoint finite sets $(X^x)_{x\in X_0}$ and $(Y^y)_{y\in Y_0}$, and $r_0>0$, so that 
\begin{enumerate}
\item $X=\bigsqcup_{x\in X_0}X^x$ and $Y=\bigsqcup_{x\in Y_0}Y^y$,
\item $x\in X^x$ and $\diam(X^x)\leq r_0$ for all $x\in X_0$, and
\item $y\in Y^y$ and $\diam(Y^y)\leq r_0$ for all all $y\in Y_0$.
\end{enumerate} 
For each $x\in X_0$, pick a bijection \[g_x\colon X^x\times \N\to Y^{f(x)}\times \N.\] Define
\[
g=\bigcup_{x\in X_0}g_x.
\] So $g$ is a bijection between $X\times\N$ and $Y\times\N$. Let $g_1\colon X\times \N\to Y$ and $g_2\colon X\times \N\to \N$ be the compositions of $g$ with the projections onto the first and second coordinates, respectively. If $x\in X$, we write $g_1(x,\N)$ for the set $\{g_1(x,n)\mid n\in\N\}$.

 Define a unitary $u=u_g\colon\ell_2(X,H)\to \ell_2(Y,H)$ by
\[
u\delta_x\otimes \xi_n=\delta_{g_1(x,n)}\otimes \zeta_{g_2(x,n)}
\]
for all $(x,n)\in X\times \N$. For $a\in\mathcal B(\ell_2(X,H))$, define $\Psi\colon\cB(\ell_2(X,H))\to \cB(\ell_2(Y,H))$ by
\[
\Psi(a)=uau^*.
\]
Notice that $\Psi$ maps locally compact elements to locally compact elements. We intend to show that the image of $\cst(X)$ is in $\cst(Y)$. 

\begin{claim}\label{Claim1}
If $a$ has finite propagation so does $\Psi(a)$.
\end{claim}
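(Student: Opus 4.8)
The plan is to show that $\Psi(a) = uau^*$ has finite propagation in $\cB(\ell_2(Y,H))$ whenever $a$ does, by tracking how the unitary $u = u_g$ relates the geometry of $X$ to that of $Y$ through the coarse equivalence $f$. The key observation is that $u$ is built from the bijection $g = \bigcup_{x \in X_0} g_x$, and each $g_x$ maps $X^x \times \N$ onto $Y^{f(x)} \times \N$, so $u$ ``respects'' the block decomposition $X = \bigsqcup_{x \in X_0} X^x$ and $Y = \bigsqcup_{y \in Y_0} Y^y$ up to the bounded error controlled by $r_0$ and by the coarseness of $f$.

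First I would compute the matrix entries of $\Psi(a)$ directly. For $y, y' \in Y$ and basis vectors $\zeta_m, \zeta_{m'}$, the entry $\langle \Psi(a)(\delta_y \otimes \zeta_m), \delta_{y'} \otimes \zeta_{m'}\rangle$ equals $\langle a\, u^*(\delta_y \otimes \zeta_m), u^*(\delta_{y'} \otimes \zeta_{m'})\rangle$. Since $u$ permutes the orthonormal basis $\{\delta_x \otimes \xi_n\}$ onto $\{\delta_{g_1(x,n)} \otimes \zeta_{g_2(x,n)}\}$, the vector $u^*(\delta_y \otimes \zeta_m)$ is (when it is nonzero) of the form $\delta_x \otimes \xi_n$ where $(x,n) = g^{-1}(y,m)$; in particular the first coordinate $x$ satisfies $y \in Y^{f(x)}$, equivalently $f(x)$ lies in the same block as $y$. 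Thus a nonzero entry of $\Psi(a)$ at position $(y, y')$ forces the existence of $x, x' \in X$ with $f(x), f(x')$ in the blocks of $y, y'$ respectively and with $a_{x x'} \neq 0$, hence $d(x, x') \leq \propg(a) =: r$.

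Next I would convert this into a bound on $\partial(y, y')$. Because $f$ is coarse, $d(x,x') \leq r$ gives $\partial(f(x), f(x')) \leq s$ for some $s = s(r)$. Since $y$ and $f(x)$ lie in the same block $Y^{f(x)}$ of diameter at most $r_0$, and similarly for $y'$ and $f(x')$, the triangle inequality yields
\[
\partial(y, y') \leq \partial(y, f(x)) + \partial(f(x), f(x')) + \partial(f(x'), y') \leq r_0 + s + r_0.
\]
Therefore $\propg(\Psi(a)) \leq s(r) + 2r_0 < \infty$, which is exactly the claim.

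I expect the only real care to be in the bookkeeping of the preceding paragraph: one must be precise about which block a given $y \in Y$ belongs to and verify that $u^*(\delta_y \otimes \zeta_m)$ is supported on a single $X^x$ with $f(x)$ in the block containing $y$. This follows because $g_x$ maps into $Y^{f(x)} \times \N$, so $g^{-1}$ sends $Y^y \times \N$ back into $\bigl(f^{-1}(\{y\}) \cap X_0\text{-block}\bigr) \times \N$; writing this out cleanly is the main obstacle, but it is essentially a matter of unwinding the definition of $g$ rather than a genuine difficulty. Note that local compactness of the entries was already observed, so combining that observation with the finite-propagation bound will give $\Psi(a) \in \cst(Y)$ once $a$ has finite propagation, and the general case $a \in \cst(X)$ then follows by density and continuity of $\Psi$.
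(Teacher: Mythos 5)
Your proof is correct and is essentially the paper's own argument: compute the entries of $uau^*$ by pulling basis vectors back through $g^{-1}$, use that $g$ respects the block decompositions, and bound $\partial(y,y')$ via coarseness of $f$ together with the triangle inequality over the $r_0$-diameter blocks (the paper phrases this contrapositively, showing the $(y,y')$ entries vanish when $\partial(y,y')>s'+2r_0$, but that is the same argument). The one bookkeeping slip --- which you yourself flag --- is the assertion $y\in Y^{f(x)}$: what is actually true is $y\in Y^{f(z)}$ for the block representative $z\in X_0$ with $x\in X^z$, so one only gets $d(z,z')\le r+2r_0$ rather than $\le r$, and the coarseness modulus must therefore be evaluated at $r+2r_0$ (exactly as the paper does), giving the propagation bound $s(r+2r_0)+2r_0$ in place of your $s(r)+2r_0$; this changes only the constant, not the finiteness that the claim asserts.
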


\begin{proof}
Fix $r>0$ and pick $s'$ such that if $d(z,z')\leq r+2r_0$ then $\partial(f(z),f(z'))\leq s'$ whenever $z$ and $z'$ are in $X$. This exists as $f$ is coarse. 

Suppose now that $x$ and $x'$ are elements of $X$ with $d(x,x')\leq r$. Let $y$ and $y'$ be such that $y\in g_1(x,\N)$ and $y'\in g_1(x',\N)$. Pick $z$ and $z'$ such that $x\in X^z$ and $x'\in X^{z'}$. Notice that $y\in Y^{f(z)}$ and $y'\in Y^{f(z')}$. Since $f$ is coarse and the diameter of each $X^z$ is bounded by $r_0$, $d(z,z')\leq r+2r_0$, hence $\partial(f(z),f(z'))\leq s'$. Since the diameter of each $Y^y$ is bounded by $r_0$, we have that $\partial(y,y')\leq s'+2r_0$. 

Pick now $a\in\BD(X)$ with $\propg(a)\leq r$, and suppose that $y$ and $y'$ are elements on $Y$ with $\partial(y,y')> s'+2r_0$. Fix also $n,n'\in \N$. Let $w$ and $w'$ be in $X$ and, let $m,m'\in \N$ be such that $g(w,m)=(y,n)$ and $g(w',m')=(y',n')$. Since $\partial(y,y')>s'+2r_0$, then $d(w,w')>r$, hence $a_{ww'}=0$. In particular 
\begin{align*}
0&=\langle a(\delta_w\otimes\xi_n), \delta_{w'}\otimes\xi_{n'}\rangle=\langle u^*\Psi(a)u(\delta_w\otimes\xi_n), \delta_{w'}\otimes\xi_{n'}\rangle\\&=\langle \Psi(a)u(\delta_w\otimes\xi_n), u\delta_{w'}\otimes\xi_{n'}\rangle=\langle \Psi(a)(\delta_y\otimes\zeta_m),\delta_{y'}\otimes\zeta_{m'}\rangle
\end{align*}
Since $m$ and $m'$ are arbitrary, then $\Psi_{yy'}=0$. Since $y$ and $y'$ were arbitrary elements such that $\partial(y,y')>s'+2r_0$, then $\propg(\Psi(a))\leq s'+2r_0$. This concludes the proof.
\end{proof}
Claim~\ref{Claim1} implies that $\Psi(\cst(X))\subset \cst(Y)$. By symmetry of the arguments, it follows that $\Psi^{-1}(\cst(Y))\subset \cst(X)$. Hence $\Psi$ restricts to an isomorphism between $\cst(X)$ and $\cst(Y)$. We set 
\[
\Phi_{f,(\xi_n),(\zeta_n),g}=\Psi\rs\cst(X).
\]
This map is highly noncanonical, as it depends on the choices of $\bar\xi$, $\bar\zeta$, and $g$. (The latter in turns depends on $f$, but not canonically.) We want to make this association canonical.

If $\Phi$ and $\Psi$ are isomorphisms between $\cst(X)$ and $\cst(Y)$, we write
\[
\Phi\sim_{u,\BD}\Psi \iff\exists u\in \BD(Y) \text{ such that } \Phi=\Ad(u)\circ\Psi.
\]
We show that our association becomes canonical when $\Aut(\cst(X))$ is divided by $\Inn(\BD(X))$.
\begin{proposition}
Let $X$ and $Y$ be u.l.f.\@ metric spaces, and let $f$ and $h$ be coarse equivalences between $X$ and $Y$. Suppose that $\Phi_{f,(\xi_n),(\zeta_n),g}$ and $\Phi_{h,(\xi'_n),(\zeta'_n),g'}$ are constructed as above from different parameters. Then 
\[
\Phi_{f,(\xi_n),(\zeta_n),g}\sim_{u,\BD}\Phi_{h,(\xi'_n),(\zeta'_n),g'} \text{ if and only if } f \text{ is close to } h.
\]
\end{proposition}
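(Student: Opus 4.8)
Let me abbreviate $\Phi_f := \Phi_{f,(\xi_n),(\zeta_n),g} = \Ad(u_g)\rs\cst(X)$ and $\Phi_h := \Phi_{h,(\xi'_n),(\zeta'_n),g'} = \Ad(u_{g'})\rs\cst(X)$, where $u_g,u_{g'}\colon\ell_2(X,H)\to\ell_2(Y,H)$ are the implementing unitaries from the construction above. The plan is to first prove
\[
\Phi_f\sim_{u,\BD}\Phi_h \iff u_gu_{g'}^*\in\BD(Y).
\]
For $(\Leftarrow)$ one checks directly that $\Ad(u_gu_{g'}^*)\circ\Phi_h=\Phi_f$ on $\cst(X)$, so $u_gu_{g'}^*$ is a witnessing unitary lying in $\BD(Y)$. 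For $(\Rightarrow)$, if $w\in\BD(Y)$ is a unitary with $\Phi_f=\Ad(w)\circ\Phi_h$, then $v:=u_g^*wu_{g'}$ is a unitary on $\ell_2(X,H)$ satisfying $vav^*=a$ for every $a\in\cst(X)$. Since $\cst(X)$ contains $\cK(\ell_2(X,H))$ (every finite-rank operator has finite propagation and finite-rank, hence compact, blocks) and $\cK(\ell_2(X,H))$ acts irreducibly, $v$ is a scalar; hence $w$ is a scalar multiple of $u_gu_{g'}^*$, which therefore lies in $\BD(Y)$.

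\textbf{Localization estimate.} The geometric core is that each implementing unitary ``is'' its coarse map up to bounded error. From the construction, if $x\in X^{x_*}$ then $g_1(x,n)\in Y^{f(x_*)}$ for every $n$, so every point of $g_1(x,\N)$ lies within $C_f:=r_0+\omega_f(r_0)$ of $f(x)$, where $\omega_f$ is a coarseness modulus of $f$; thus $(u_g)_{yx}\neq 0$ forces $\partial(y,f(x))\le C_f$. The analogous statement holds for $u_{g'}$ with a constant $C_h$ and $h$ in place of $f$.

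\textbf{Reading off closeness.} Expanding the $(y',y)$-block $(u_gu_{g'}^*)_{y'y}=\sum_x(u_g)_{y'x}(u_{g'}^*)_{xy}$, a nonzero block forces an $x\in X$ with $\partial(y',f(x))\le C_f$ and $\partial(y,h(x))\le C_h$. If $f$ is close to $h$, say $M=\sup_x\partial(f(x),h(x))<\infty$, then $\partial(y,y')\le C_f+M+C_h$, so $u_gu_{g'}^*$ has finite propagation and lies in $\BD(Y)$. Conversely, suppose $f$ is not close to $h$ and pick $x_k$ with $\partial(f(x_k),h(x_k))\to\infty$. The unit vectors $w_k:=u_{g'}(\delta_{x_k}\otimes\xi'_0)$ are supported at a single point within $C_h$ of $h(x_k)$, while $u_gu_{g'}^*w_k=u_g(\delta_{x_k}\otimes\xi'_0)$ is a unit vector supported within $C_f$ of $f(x_k)$; hence $\partial(\supp w_k,\supp(u_gu_{g'}^*w_k))\to\infty$. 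If $u_gu_{g'}^*$ were band-dominated, choose a finite-propagation $b_0$ with $\norm{u_gu_{g'}^*-b_0}<1/2$; for $k$ large the supports of $b_0w_k$ and $u_gu_{g'}^*w_k$ are disjoint, whence
\[
1=\norm{u_gu_{g'}^*w_k}^2=\langle(u_gu_{g'}^*-b_0)w_k,\,u_gu_{g'}^*w_k\rangle<1/2,
\]
a contradiction. Thus $u_gu_{g'}^*\notin\BD(Y)$, completing the equivalence.

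\textbf{Main obstacle.} The delicate part is the bookkeeping in the localization estimate: the block decompositions $(X^x),(Y^y)$ and the coarseness moduli used for $f$ and for $h$ are a priori different, and one must fold them into the uniform constants $C_f,C_h$. Everything else is a clean propagation computation; the scalar-commutant step and the finite-propagation approximation argument are standard.
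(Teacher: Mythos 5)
Your proof is correct, and it is in fact more complete than the paper's own argument, although the underlying strategy --- reduce everything to whether a single comparison unitary lies in $\BD(Y)$, then read closeness off the matrix support of that unitary --- is shared. The paper proceeds by first normalizing the bases: it conjugates by the basis-change unitaries (which lie in $\BD(X)$ and $\BD(Y)$) to reduce to the case $\bar\xi=\bar\xi'$, $\bar\zeta=\bar\zeta'$, and then forms $g''=g\circ (g')^{-1}$ and the unitary $v$ implementing it. Note that with equal bases your $u_gu_{g'}^*$ \emph{is} this $v$, so you have simply folded the base-change step into the localization estimate --- legitimately, since the support of $u_g$ as a $Y\times X$ matrix depends only on $g_1$ and not on the chosen bases. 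More significantly, the paper is a sketch precisely at the two points where you do real work: it never justifies why an \emph{arbitrary} witnessing unitary $w\in\BD(Y)$ with $\Phi_f=\Ad(w)\circ\Phi_h$ forces the canonical comparison unitary into $\BD(Y)$ --- your scalar-commutant argument, using $\cK(\ell_2(X,H))\subseteq\cst(X)$ and irreducibility, supplies exactly this --- and it dismisses the geometric step with ``this follows from how $g$ and $g'$ are constructed because $f$ and $h$ are close,'' which addresses only the ``if'' direction of the equivalence. Your explicit bound $\partial(y',y)\le C_f+M+C_h$ and, for the ``only if'' direction, the disjoint-support contradiction against a finite-propagation approximant $b_0$ are what actually complete the if-and-only-if. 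One small slip to fix: your parenthetical reason for $\cK(\ell_2(X,H))\subseteq\cst(X)$ (``every finite-rank operator has finite propagation'') is false as stated, since a rank-one operator built from vectors with infinite support has infinite propagation; the correct (standard) justification is that finite-rank operators are norm limits of operators supported in $F\times F$ for finite $F\subseteq X$, and those have finite propagation and compact entries. This does not affect the architecture of your argument.
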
 
 
\begin{proof}
Let $u\in\BD(X)$ be the unitary such that for all $x\in X$ and $n\in\NN$, $u\delta_x\otimes\xi_n=\delta_x\otimes\xi'_n$, and let $w\in\BD(Y)$ be the unitary such that for all $y\in Y$ and $n\in\N$, $w(\delta_y\otimes \zeta_n)=\delta_y\otimes\zeta'_n$. Then 
\[
\Phi_{h,(\xi_n),(\zeta_n),g'}=\Ad(w^*)\circ \Phi_{h,(\xi'_n),(\zeta'_n),g'}\circ\Ad(u).
\]
Since $w$ and $\Phi_{h,(\xi'_n),(\zeta'_n),g'}(u)$ are in $\BD(Y)$, then 
\[
\Phi_{f,(\xi_n),(\zeta_n),g}\sim_{u,\BD}\Phi_{h,(\xi'_n),(\zeta'_n),g'} \ \Leftrightarrow\ \Phi_{f,(\xi_n),(\zeta_n),g}\sim_{u,\BD}\Phi_{h,(\xi_n),(\zeta_n),g'}.
\]
Let $g''\colon Y\times\N\to Y\times\N$ given by $g''=g\circ g'^{-1}$. Since $g$ and $g'$ are bijections, so is $g''$. Define a unitary $v\in\mathcal B(\ell_2(Y,H))$ by 
\[
v(\delta_y\otimes\zeta_n)=\delta_{g''_1(y,n)}\otimes\zeta_{g''_2(y,n)}.
\]
It is routine to check that $\Phi_{f,(\xi_n),(\zeta_n),g}=\Ad(v)\circ\Phi_{h,(\xi'_n),(\zeta'_n),g'}$. Hence we just need to show that $v\in\BD(Y)$. This follows from how $g$ and $g'$ are constructed because $f$ and $h$ are close.
\end{proof}

We have constructed a canonical injective homomorphism $\Coa(X)\to\Out(\cst(X))$. As in the proof of Theorem~\ref{thm:uniform}, our efforts will amount to prove surjectivity.

\section{Uniform approximability in Roe algebras}\label{SecUnifApprox}

This section deals with uniform approximability of maps $\Phi$ between \cstar-subalgebras of $\cB(\ell_2(X,H))$ and $\cB(\ell_2(Y,H))$. Precisely, in this section we study when maps satisfy coarse-like properties, that is, when morphisms respect the large-scale geometry of the underlying spaces. This concept was introduced for maps between uniform Roe algebras in \cite{BragaFarah2018}, and formalised in \cite{BragaFarahVignati2018}; here we define it in the setting of Roe algebras.

 \begin{definition}\label{DefiCoarseLikeProp}
Let $X$ and $Y$ be metric spaces, $\cA\subset \cB(\ell_2(X,H))$ and $\cB\subset \cB(\ell_2(Y,H))$ be \cstar-subalgebras. 
\begin{enumerate}
\item Given $\eps,r>0$, an element $a\in \cA$ is \emph{$\eps$-$r$-approximable in $\cA$} if there is $c\in \cA$ with $\propg(c)\leq r$ so that $\|a-c\|\leq \eps$.
\item A map $\Phi\colon\cA\to \cB$ is \emph{coarse-like} if for all $\eps,r>0$ there is $s>0$ so that $\Phi(a)$ is $\eps$-$s$-approximable in $\cB$ for all contractions in $a\in \cA$ with $\propg(a)\leq r$. 
\end{enumerate} 
\end{definition} 



The following theorem is the starting point for our research on uniform approximability for Roe algebras. It is a simple consequence of \cite[Lemma 4.9]{BragaFarah2018} (see \cite[Proposition 3.3]{BragaFarahVignati2019} for a precise proof; cf.\@ \cite[Theorem 4.4]{BragaFarah2018}). 

\begin{theorem}\label{ThmCoarseLikeURA}
Let $X$ and $Y$ be u.l.f.\@ metric spaces and let $\Phi\colon\cstu(X)\to \cstu(Y)$ be a strongly continuous compact preserving linear map. Then $\Phi$ is coarse-like. \qed
\end{theorem}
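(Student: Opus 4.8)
The statement is the uniform Roe algebra instance of the coarse-like phenomenon, and my plan is to reduce it to the quantitative approximability characterisation for uniform Roe algebras established in \cite[Lemma 4.9]{BragaFarah2018} (restated as \cite[Proposition 3.3]{BragaFarahVignati2019}). Concretely, I would use that lemma in the form: for each $\eps>0$ there is $\delta>0$ such that, for every $s>0$, one can find $s'\ge s$ with the property that any contraction $b\in\cstu(Y)$ whose two-set compressions satisfy $\|\chi_A b\chi_B\|\le\delta$ for all $A,B\subseteq Y$ with $d(A,B)>s'$ is automatically $\eps$-$s$-approximable. Thus to prove that $\Phi$ is coarse-like it suffices to bound, \emph{uniformly} over all contractions $a\in\cstu(X)$ with $\propg(a)\le r$, the far-off-diagonal compressions $\|\chi_A\Phi(a)\chi_B\|$ of the image. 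The essential difficulty is that $\Phi$ is only assumed to be linear, so one cannot push the cut-off projections $\chi_A,\chi_B\subseteq Y$ through $\Phi$ onto the structure of $a$ in $X$: the sole bridge between the coarse geometry of the domain and that of the codomain is the pair of hypotheses, strong continuity and compact-preservation.

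For the reduction I would exploit that $X$ is u.l.f.\ Fixing $r$, a vertex colouring of the $3r$-graph on $X$ (whose degree is bounded by $|B_{3r}(x)|$) splits $X=\bigsqcup_{i=1}^{m}X_i$ into $m=m(r)$ pieces that are each $3r$-separated, so that for a contraction $a$ with $\propg(a)\le r$ one has $a=\sum_{i=1}^m a\chi_{X_i}$ with each $a\chi_{X_i}=\SOTh\sum_{x\in X_i}a\chi_{\{x\}}$ a strongly convergent sum of finite-rank blocks with pairwise orthogonal supports. Applying $\Phi$, strong continuity gives $\Phi(a\chi_{X_i})=\SOTh\sum_{x\in X_i}\Phi(a\chi_{\{x\}})$, and compact-preservation makes each summand $\Phi(a\chi_{\{x\}})$ compact. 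Since $m$ depends only on $r$, the whole problem is reduced to showing that such strongly convergent sums of compacts have uniformly controlled spread.

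The natural first instinct for the remaining core is a contradiction-plus-gliding-hump argument: assuming $\Phi$ not coarse-like one fixes witnessing $\eps_0,r_0$ and, for each scale $s$, a contraction $a_s$ with $\propg(a_s)\le r_0$ such that $\Phi(a_s)$ is not $\eps_0$-$s$-approximable; the contrapositive of the approximability lemma then yields
\[
A_s,B_s\subseteq Y,\quad d(A_s,B_s)\to\infty,\quad \|\chi_{A_s}\Phi(a_s)\chi_{B_s}\|\ge\delta_0,
\]
and, using strong continuity, each $a_s$ may be localised to a finite-rank piece $a_s\chi_{G_s}$ that still witnesses the compression. One would then try to spread these pieces out in $X$ and assemble a compact operator whose image fails to be compact, contradicting compact-preservation. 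Here lies the main obstacle, and it is genuine: every single element of $\cstu(Y)$ is already $\eps$-$s$-approximable for $s$ large, so no single compact test operator can witness the failure—the failure is intrinsically \emph{uniform across scales}. In the assembled sum the weights needed to force compactness of the domain operator directly undercut the compression lower bounds one wants to preserve in the codomain, so a simultaneous, scale-by-scale control is required rather than a one-shot diagonalisation.

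It is exactly this simultaneous control that the cited lemma packages, and in the write-up I would lean on it rather than reprove it. Granting it, the off-diagonal compressions of $\Phi(a)$ fall below $\delta$ uniformly over contractions of propagation at most $r$ once the separation exceeds the scale $s'$ it provides; feeding this back into the approximability characterisation produces a single $s=s(\eps,r)$ for which every such $\Phi(a)$ is $\eps$-$s$-approximable, and summing the contributions of the $m(r)$ pieces (absorbing the factor $m(r)$ into the choice of tolerance) yields coarse-likeness. In short, the skeleton—u.l.f.\ decomposition, strong continuity to pass to compact blocks, and the approximability dictionary—is routine; the one deep input, and the step I expect to cost the most, is the uniform-across-scales compression bound furnished by \cite[Lemma 4.9]{BragaFarah2018}.
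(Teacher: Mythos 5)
Your argument has a genuine gap, and it lies precisely in the statement you attribute to \cite[Lemma 4.9]{BragaFarah2018}. That lemma is \emph{not} a quasi-locality characterisation of approximability. It is the Baire-category uniformity statement: if $(a_n)_n$ is a sequence of \emph{compact} operators such that $a_{\bar\lambda}=\SOTh\sum_n\lambda_n a_n$ lies in the algebra for every $\bar\lambda\in\D^\N$, then for every $\eps>0$ there is a single $r>0$ such that \emph{every} $a_{\bar\lambda}$ is $\eps$-$r$-approximable (this is exactly what Lemmas \ref{Lemma4.9BD} and \ref{Lemma4.9} of the present paper generalise to $\BD(X)$ and $\cst(X)$). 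The statement you substitute for it --- that uniformly small far off-diagonal corners $\|\chi_A b\chi_B\|\le\delta$ force $\eps$-$s$-approximability --- is in essence the theorem that quasi-local operators belong to the uniform Roe algebra, i.e.\ \cite[Theorem 3.3]{SpakulaZhang2018}, which requires property A. Since Theorem \ref{ThmCoarseLikeURA} carries no property A hypothesis, that tool is simply unavailable here, and a proof routed through off-diagonal compressions cannot establish the theorem in its stated generality. (Note that the present paper invokes quasi-locality only in Theorem \ref{ThmIsoMakeAsympCoarseLike}, exactly where property A is assumed.) Your final assembly is moreover circular: to apply your ``characterisation'' you need uniform bounds on $\|\chi_A\Phi(a)\chi_B\|$, which you never derive, and you then assert that the same lemma supplies those bounds --- even in the form you state it, it is an implication \emph{from} compression bounds \emph{to} approximability, not a bound on the compressions of $\Phi(a)$.

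The route actually behind the citation avoids compressions entirely and runs as follows: negate coarse-likeness to get $\eps,r>0$ and, for each $n$, a contraction $a_n$ with $\propg(a_n)\le r$ such that $\Phi(a_n)$ is not $\eps$-$n$-approximable; use SOT-approximation (as in Proposition \ref{PropSOTConvApprox}) and uniform local finiteness to arrange that the $a_n$ have pairwise disjoint \emph{finite} supports, hence are finite rank; then $a_{\bar\lambda}\in\cstu(X)$ for every $\bar\lambda\in\D^\N$ (disjoint supports, propagation $\le r$), and strong continuity, linearity and compact preservation of $\Phi$ give that the $\Phi(a_n)$ are compact with $\SOTh\sum_n\lambda_n\Phi(a_n)\in\cstu(Y)$ for all $\bar\lambda$; the genuine Lemma 4.9 applied to $(\Phi(a_n))_n$ now produces a single $s$ with every $\Phi(a_n)$ being $\eps$-$s$-approximable, contradicting the choice of $a_n$ for $n>s$. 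This is exactly the scheme the paper itself uses in Lemma \ref{LemmaCoarseLikeRA} and Theorem \ref{ThmAsympCoarseLike}; your $3r$-separated colouring decomposition is harmless but plays no role once the argument is set up this way.
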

 
In layman terms, Theorem \ref{ThmCoarseLikeURA} says that, for uniform Roe algebras, uniform approximability holds in a very strong sense. This result is false for Roe algebras.

\begin{proposition}\label{PropNotCoarseLike}
Given a finite metric space $X$ and a metric space of infinite diameter $Y$, there is a compact preserving strongly continuous embedding $\Phi\colon\cst(X)\to \cst(Y)$ onto a hereditary subalgebra of $\cst(Y)$ which is not coarse-like. 
\end{proposition}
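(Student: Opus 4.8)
The plan is to exploit the fact that a finite metric space $X$ has finite diameter, so $\cst(X)$ is a full matrix algebra over $\cK(H)$: if $|X|=k$, then $\cst(X)\cong M_k(\cK(H))\cong \cK(H^k)\cong\cK(\ell_2(X,H))$. Every operator on a finite space automatically has finite propagation (bounded by $\diam(X)$), so the coarse structure on the domain is degenerate. The idea is to build an embedding whose image uses arbitrarily large propagation in the target $Y$, so that the single finite-propagation constraint on the domain cannot control propagation in the codomain. First I would fix an infinite sequence of points $y_0,y_1,y_2,\dots$ in $Y$ with $\partial(y_i,y_j)\to\infty$ as $\max\{i,j\}\to\infty$ (possible since $\diam(Y)=\infty$), and choose an orthonormal system in $\ell_2(Y,H)$ indexed along these points so that a copy of $\ell_2(X,H)$ sits inside $\ell_2(Y,H)$ but spread out over the $y_i$.

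Concretely, I would pick a countable orthonormal basis $(e_m)_{m}$ of $\ell_2(X,H)$ (which is separable) and an isometry $V\colon \ell_2(X,H)\to\ell_2(Y,H)$ sending $e_m$ to $\delta_{y_{i(m)}}\otimes\eta_{j(m)}$, where $(\eta_j)_j$ is an orthonormal basis of $H$ and the indexing $m\mapsto (i(m),j(m))$ is a bijection onto a suitable subset of $\N\times\N$ arranged so that each basis vector $e_m$ lands on a distinct point $y_{i(m)}$ with $i(m)\to\infty$. Then I define
\[
\Phi(a)=VaV^*\quad\text{for }a\in\cst(X).
\]
Since $\cst(X)=\cK(\ell_2(X,H))$, the map $\Phi$ is a $\ast$-isomorphism onto $V\cK(\ell_2(X,H))V^*=\cK(V\ell_2(X,H))$, which is the compact operators on a closed subspace of $\ell_2(Y,H)$; this is a hereditary subalgebra of $\cK(\ell_2(Y,H))\subset\cst(Y)$, and $\Phi$ is compact-preserving and strongly continuous because it is implemented by an isometry and compacts go to compacts. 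I should check that $\Phi(a)\in\cst(Y)$, i.e.\ that each matrix entry $\Phi(a)_{yy'}$ is compact: this holds because $\Phi(a)$ is itself compact on $\ell_2(Y,H)$, so all its entries are compact a fortiori.

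The crux is verifying that $\Phi$ is \emph{not} coarse-like. The point is that a rank-one operator $e_{m}e_{m'}^*$ on $\ell_2(X,H)$ is a norm-one contraction of propagation at most $\diam(X)$, yet its image is the rank-one operator $(\delta_{y_{i(m)}}\otimes\eta_{j(m)})(\delta_{y_{i(m')}}\otimes\eta_{j(m')})^*$, whose only nonzero matrix entry sits at position $(y_{i(m)},y_{i(m')})$ and hence has propagation exactly $\partial(y_{i(m)},y_{i(m')})$. By choosing the indexing so that among such contractions the quantities $\partial(y_{i(m)},y_{i(m')})$ are unbounded, I get a family of contractions of propagation $\le\diam(X)$ whose images have unbounded propagation. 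To finish I would invoke a lower bound showing that a rank-one operator at far-apart points cannot be $\eps$-$s$-approximated by a propagation-$s$ operator once $\partial\gg s$: if $\|b\|_{\text{entry}}=1$ at position $(y_{i(m)},y_{i(m')})$ while any $c$ with $\propg(c)\le s< \partial(y_{i(m)},y_{i(m')})$ has $c_{y_{i(m)}y_{i(m')}}=0$, then $\|\Phi(e_m e_{m'}^*)-c\|\ge 1>\eps$ for small $\eps$. Thus for $\eps<1$ and any candidate $s$, there is a contraction of propagation $\le\diam(X)$ whose image is not $\eps$-$s$-approximable, contradicting coarse-likeness.

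The main obstacle I anticipate is purely bookkeeping: arranging the bijection $m\mapsto(i(m),j(m))$ so that it is genuinely a bijection onto a set giving an orthonormal (hence isometric) $V$, while simultaneously guaranteeing that the pairs $(i(m),i(m'))$ realize arbitrarily large distances among \emph{single} rank-one generators. Since $\diam(Y)=\infty$ supplies points at arbitrarily large mutual distance, this is achievable—e.g.\ spread the images so that consecutive basis vectors land on points marching off to infinity—but one must make the selection carefully so that the approximation lower bound uses a single contraction at a time rather than a sum. No deep input is needed beyond $\cst(X)\cong\cK$ for finite $X$ and the elementary fact that the $(y,y')$ entry of a propagation-$s$ operator vanishes when $\partial(y,y')>s$.
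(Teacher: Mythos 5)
Your proof is correct and takes essentially the same route as the paper's: both conjugate by an isometry that spreads an orthonormal basis of $\ell_2(X,H)$ over points of $Y$ at unbounded mutual distances, and both witness the failure of coarse-likeness with rank-one matrix units, whose images have a single nonzero entry located at far-apart points of $Y$ and hence cannot be $\eps$-$s$-approximated once the distance exceeds $s$. The paper merely avoids the bookkeeping you mention by taking $X$ a singleton and $Y=\N$, with the isometry $\delta_x\otimes\xi_n\mapsto\delta_n\otimes\xi_1$.
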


\begin{proof}
For simplicity, we assume $X$ to be a singleton, say $X=\{x\}$, and $Y$ to be countable, say $Y=\N$. Fix a metric $\partial$ on $Y$ of infinite diameter. Let $(\xi_n)_n$ be an orthonormal basis for $H$, and define $u\colon\ell_2(X,H)\to \ell_2(Y,H)$ by 
\[
u\delta_x\otimes \xi_n=\delta_n\otimes \xi_1, \,\,\text{ for } n\in\N.
\]
The operator $u$ is an isometry, and $\Ad(u)$ is a compact preserving strongly continuous embedding of $\mathcal B(\ell_2(X,H))$ into $\mathcal B(\ell_2(Y,H))$. As $\cst(X)=\chi_{\{x\}}\otimes \cK(H)$, then $\Ad(u)(\cst(X))\subset \cK(\ell_2(Y,H))\subset \cst(Y)$. The image of $\Ad(u)$ is a hereditary subalgebra of $\cst(Y)$ since it equals $\cK(\ell_2(Y))\otimes q_1$, where $q_1\colon H\to\Span \{\xi_1\}$ is the standard projection.

We are left to show that $\Ad(u)$ is not coarse-like. Fix $n\in\N$, and let $m\in\N$ be such that $\partial(1,m)\geq n$. Let $v\in\cB(\ell_2(X,H))$ be the rank $1$ partial isometry sending $\delta_x\otimes \xi_1$ to $\delta_x\otimes \xi_m$. Then $v$ has propagation $0$ but, as \[\langle \Ad(u)v(\delta_1\otimes \xi_1),\delta_m\otimes \xi_1\rangle=1,\] it follows that $\Ad(u)v$ cannot be $1/2$-$n$-approximated. As $n$ is arbitrary, we are done. 
\end{proof}

The map of Proposition~\ref{PropNotCoarseLike} sends a sequence which is converging in the strong operator topology to an element outside of $\cst(X)$ to a sequence which is converging in the strong operator topology to an element in $\cst(Y)$. This is the only obstacle in generalising Theorem~\ref{ThmCoarseLikeURA} to Roe algebras.

The following two theorems are our main uniform approximability results and most of this section is dedicated to prove them.

 \begin{theorem}\label{ThmCoarseLike}
 Let $X$ and $Y$ be u.l.f.\@ metric spaces. Then every isomorphism between $\cst(X)$ and $\cst(Y)$ is coarse-like.\end{theorem}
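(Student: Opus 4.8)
The plan is to first prove that every such isomorphism is \emph{spatial}, and then to run a propagation-counting argument that genuinely uses surjectivity --- the feature that fails for the embedding of Proposition~\ref{PropNotCoarseLike}. For the first step I would observe that the minimal projections of $\cst(X)$ are exactly the rank-one projections $p_v$ onto lines $\mathbb{C}v$, $v\in\ell_2(X,H)$: any such $p_v$ lies in $\cst(X)$ (it is compact, with rank-one, hence compact, matrix entries) and satisfies $p_v\cst(X)p_v=\mathbb{C}p_v$, while conversely a projection of rank $\ge 2$ in $\cst(X)$ dominates some $p_v\in\cst(X)$ and so is not minimal. Since the closed ideal generated by the minimal projections of $\cst(X)$ is precisely $\cK(\ell_2(X,H))$, and a $*$-isomorphism carries minimal projections to minimal projections, $\Phi$ restricts to an isomorphism $\cK(\ell_2(X,H))\to\cK(\ell_2(Y,H))$. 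Every isomorphism between algebras of compact operators is spatial, so there is a unitary $W\colon\ell_2(X,H)\to\ell_2(Y,H)$ with $\Phi|_{\cK}=\Ad(W)$; as $\cK(\ell_2(X,H))$ is an essential ideal of $\cst(X)$ and $\cst(X)\subseteq\cB(\ell_2(X,H))=\mathcal M(\cK(\ell_2(X,H)))$, the relation $\Phi(ak)=\Phi(a)\Phi(k)$ for $k\in\cK$ forces $\Phi=\Ad(W)$ on all of $\cst(X)$. The crucial point is that $W$ is a \emph{unitary}, not merely an isometry as in Proposition~\ref{PropNotCoarseLike}, and that $\Ad(W)$ maps \emph{onto} $\cst(Y)$.

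Next I would set up the contradiction. Suppose $\Phi$ is not coarse-like, witnessed by $\eps,r>0$: for each $n$ there is a contraction $a_n\in\cst(X)$ with $\propg(a_n)\le r$ such that $\Phi(a_n)$ is not $\eps$-$n$-approximable. For u.l.f.\ $Y$ the truncation of an operator to propagation $\le n$ is bounded and has propagation $\le n$, so taking $c$ to be this truncation shows that non-approximability yields, after localising the relevant $\ell_2$-vectors to finite sets, unit vectors $\eta_n,\theta_n\in\ell_2(Y,H)$ with $d(\supp\eta_n,\supp\theta_n)>n$ and $|\langle\Phi(a_n)\eta_n,\theta_n\rangle|>\eps/2$. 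Writing $\alpha_n=W^*\eta_n$, $\beta_n=W^*\theta_n$ and using $\Phi=\Ad(W)$, this reads $|\langle a_n\alpha_n,\beta_n\rangle|>\eps/2$. Since $\propg(a_n)\le r$, the vector $a_n\alpha_n$ is supported in the $r$-neighbourhood $N_r(\supp\alpha_n)$, whence $\|\chi_{N_r(\supp\alpha_n)}\beta_n\|\ge\eps/2$: a definite portion of $\beta_n$ sits within distance $r$ of $\alpha_n$. Thus $W^*$ sends the pair $(\eta_n,\theta_n)$, which is arbitrarily far apart in $Y$, to a pair that is $r$-close in $X$.

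The contradiction should come from packaging infinitely many of these witnesses into a single contraction $a\in\cst(X)$ with $\propg(a)\le r$ whose image $\Phi(a)$ is not $\eps/4$-$s$-approximable for \emph{any} $s$; this is impossible, since $\Phi(a)\in\cst(Y)=\overline{\{\text{finite propagation operators}\}}$ must be $\eps/4$-$s$-approximable for some $s$. Passing to a subsequence with the $\supp\eta_n$ (and the $\supp\theta_n$) pairwise disjoint, the unitarity of $W$ makes $\{\alpha_n\}$ and $\{\beta_n\}$ orthonormal. Localising $a_n$ to $\tilde a_n=\chi_{C_n}a_n\chi_{D_n}$, where $D_n$ captures the mass of $\alpha_n$ and $C_n=N_r(D_n)$, keeps $\propg(\tilde a_n)\le r$ and preserves the witness, and the candidate is $a=\sum_n\tilde a_n$. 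If the base-supports $D_n$ can be chosen pairwise disjoint and $>2r$-separated in $X$, then $a$ is a genuine block operator: it has propagation $\le r$, norm $\le 1$, and compact matrix entries (each coming from a single $\tilde a_n\in\cst(X)$), while $\Phi(a)=\sum_n\Phi(\tilde a_n)$ retains a chunk of norm $\ge\eps/3$ at propagation $>n$ for every $n$, as required.

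The hard part is exactly the opposite case, when the $\alpha_n$ refuse to spread out, i.e.\ $\|\chi_B\alpha_n\|\ge\delta$ for a fixed finite $B\subseteq X$ and infinitely many $n$. Then the $D_n$ cannot be separated in $X$, and the naive rank-one amalgam $\sum_n\langle a_n\alpha_n,\beta_n\rangle\,|\beta_n\rangle\langle\alpha_n|$ need not have compact entries --- it may land in $\BD(X)\setminus\cst(X)$, which is precisely the $\cst$-versus-$\BD$ phenomenon underlying Proposition~\ref{PropNotCoarseLike}. Here I would exploit the infinite-dimensionality of $H$: since $\{\alpha_n\}$ is orthonormal and concentrated on the fixed finite base $B$, the vectors $\chi_B\alpha_n$ form a Bessel, hence almost-orthogonal, sequence inside $\ell_2(B,H)\cong\mathbb{C}^{|B|}\otimes H$, and I would separate them in the \emph{fibre} $H$ rather than in the base, building mutually orthogonal blocks with compact entries while keeping their images mutually orthogonal in $Y$ (using that $\Phi$ preserves orthogonality of projections). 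Reconciling this fibre-separation with the propagation bound $\le r$ and with membership in $\cst(X)$ is the main obstacle, and is where I expect the bulk of the work in this section to lie.
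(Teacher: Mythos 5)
Your first step (spatiality) is fine --- it is essentially \cite[Lemma 3.1]{SpakulaWillett2013AdvMath}, which the paper itself invokes --- and your instinct that surjectivity of $\Phi$ must be the engine of the proof is correct. The fatal gap is the second step, where you convert non-approximability into localized witnesses: you claim that if $\Phi(a_n)$ is not $\eps$-$n$-approximable, then there are unit vectors $\eta_n,\theta_n\in\ell_2(Y,H)$ with $d(\supp\eta_n,\supp\theta_n)>n$ and $|\langle\Phi(a_n)\eta_n,\theta_n\rangle|>\eps/2$. Only the converse of this implication is true. What non-approximability gives you (since, for u.l.f.\ $Y$, the band truncation $T_n\Phi(a_n)$ is bounded by a Schur-test estimate, has propagation $\leq n$ and compact entries, hence lies in $\cst(Y)$) is that the off-band part $b_n=\Phi(a_n)-T_n\Phi(a_n)$ has $\|b_n\|>\eps$; but an operator supported off the $n$-band with large norm need not admit \emph{any} far-apart localized pair of unit vectors with large pairing. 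Ghost-type operators are exactly the counterexample: let $Y$ be a disjoint union of expanders $Y_k$ and $P=\oplus_k P_k\in\cstu(Y)$ the ghost projection onto constants (it lies in $\cstu(Y)$ by spectral-gap functional calculus), and let $q$ be a rank-one projection on $H$, so $P\otimes q\in\cst(Y)$. For every fixed $n$ one has $\|T_n(P_k\otimes q)\|\leq \sup_y|B_n(y)|/|Y_k|$, so the off-band part of $P\otimes q$ has norm close to $1$ on large pieces; yet if $\eta,\theta$ are unit vectors supported on sets $A,B$, then $|\langle(P\otimes q)\eta,\theta\rangle|\leq\sqrt{|A|\,|B|}/|Y_k|$ on each piece, and expansion gives a constant $D_0$ (depending only on the expansion constant, not on $n$) such that sets of proportional size are within distance $D_0$ of each other; hence $d(\supp\eta,\supp\theta)>D_0$ forces the pairing below $1/4$. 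Since Theorem~\ref{ThmCoarseLike} assumes no property A, you cannot exclude a priori that the operators $\Phi(a_n)$ behave like ghosts --- excluding precisely this is what the theorem asserts, so the step is circular rather than merely unproved.

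The second hole you flag yourself: the ``hard case,'' where the pulled-back vectors concentrate on a fixed finite set, is left as an acknowledged obstacle with only a vague fibre-separation idea. That case is exactly where the paper's proof does its real work, and the mechanism is different from anything in your sketch: the paper localizes witnesses on the \emph{domain} side (Proposition~\ref{PropSOTConvApprox} allows one to replace a non-approximable image $\Phi(a)$ by $\Phi(\chi_Ga\chi_G)$ for finite $G\subset X$), reduces to $\Phi\restriction\chi_{\{x\}}\cst(X)\chi_{\{x\}}$ via Lemma~\ref{LemmaCoarseLikeRA} --- whose proof is a Baire-category uniformity argument (Lemma~\ref{Lemma4.9BD}/\ref{Lemma4.9}) handling the spread-out case --- and then, for a single point $x$, builds from putative failure a projection $b=\SOTh\sum_n\chi_{F_n}\otimes p_{k(n)}\in\cst(Y)$ over a disjoint sequence $(F_n)_n$ and shows that $c=\Phi^{-1}(b)$ satisfies $\|cq_{I_m}\|\geq\eps/4$ for a disjoint sequence of spectral chunks $q_{I_m}$ at $x$, contradicting local compactness of $c\in\cst(X)$. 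This application of $\Phi^{-1}$ together with local compactness of preimages is the ingredient your outline is missing; if you want to salvage your approach, that is what should replace both the range-side localization of witnesses and the fibre-separation sketch.
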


Although Proposition \ref{PropNotCoarseLike} shows that Theorem \ref{ThmCoarseLikeURA} cannot be extended to Roe algebras, the latter can be extended to band-dominated algebras as follows:

\begin{theorem}\label{ThmCoarseLikeBD}
Let $X$ and $Y$ be u.l.f.\@ metric spaces. Then every strongly continuous compact preserving $^*$-homomorphism $\Phi\colon \BD(X)\to \BD(Y)$ is coarse-like.
\end{theorem}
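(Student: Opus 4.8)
The plan is to establish the contrapositive of coarse-likeness by a gliding-hump argument: a failure of uniform approximability will be packaged into a single contraction in $\BD(X)$ whose image, being a genuine orthogonal sum of badly-approximable pieces, cannot be band-dominated, contradicting that $\Phi$ maps into $\BD(Y)$. First I would record a reformulation: $\Phi$ is coarse-like if and only if for every $\e,r>0$ there is $s>0$ with $\norm{\chi_A\Phi(a)\chi_B}\le\e$ for every contraction $a\in\BD(X)$ with $\propg(a)\le r$ and all $A,B\subseteq Y$ satisfying $\partial(A,B)>s$. The forward implication is immediate, since an operator $c$ with $\propg(c)\le s$ has $\chi_A c\chi_B=0$ once $\partial(A,B)>s$. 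For the reverse implication I would use that $Y$ is u.l.f.: controlling all off-diagonal blocks at scale $s$ yields an honest propagation-$s$ approximant of $\Phi(a)$ by a partition-of-the-band (Schur multiplier) argument whose norm constant depends only on the u.l.f.\@ parameters of $Y$. Thus it suffices to bound far-off-diagonal blocks uniformly.

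Assume towards a contradiction that this fails: fix $\e,r>0$ and, for each $k$, a contraction $a_k$ with $\propg(a_k)\le r$ together with $A_k,B_k\subseteq Y$ satisfying $\partial(A_k,B_k)>k$ and $\norm{\chi_{A_k}\Phi(a_k)\chi_{B_k}}>\e$. I would first localize this data. Since $\Phi(a_k)\in\BD(Y)$ is band-dominated, the block norm is witnessed by finitely supported vectors, so I may take $A_k,B_k$ finite and find finite-rank projections $e_k\le\chi_{A_k}$ and $f_k\le\chi_{B_k}$ with $\norm{e_k\Phi(a_k)f_k}>\e$. Writing $a_k$ as the strong limit of its spatial truncations $\chi_F a_k\chi_F$ over finite $F\subseteq X$, strong continuity of $\Phi$ gives $e_k\Phi(\chi_F a_k\chi_F)f_k\to e_k\Phi(a_k)f_k$, and since $e_k,f_k$ are finite rank this convergence is in norm. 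Hence for a large enough finite $F_k\subseteq X$ the contraction $c_k:=\chi_{F_k}a_k\chi_{F_k}$, which is supported on the finite set $F_k$ with $\propg(c_k)\le r$, still satisfies $\norm{\chi_{A_k}\Phi(c_k)\chi_{B_k}}>\e/2$. Here compact preservation is what keeps $\Phi$ honest: it forces its multiplicity to be finite and prevents the witnesses from dispersing into infinitely many fibre directions.

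I would then assemble a single bad operator. Passing to a subsequence and using that $X$ is u.l.f., I would relocate the $c_k$ into mutually orthogonal corners of $\BD(X)$ — either by choosing the supports $F_k$ pairwise disjoint and far apart in $X$, or, when $X$ is bounded, by using the infinite multiplicity of the fibre $H$ — so that $a:=\sum_k c_k$ is a well-defined contraction with $\propg(a)\le r$, hence $a\in\BD(X)$. Because the corners are cut out by projections in $\BD(X)$ and $\Phi$ is a $^*$-homomorphism, these projections are sent to mutually orthogonal projections in $\BD(Y)$; consequently $\Phi(a)=\bigoplus_k\Phi(c_k)$ is a genuine block-diagonal sum, lying in $\BD(Y)$ by hypothesis. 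The orthogonality of the blocks prevents the off-diagonal contributions $\chi_{A_k}\Phi(c_j)\chi_{B_k}$ with $j\ne k$ from cancelling the surviving diagonal block, so $\Phi(a)$ carries blocks of norm bounded below by a fixed multiple of $\e$ at distances $\partial(A_k,B_k)\to\infty$. This contradicts $\Phi(a)\in\BD(Y)$ through the reformulation of the first step.

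The delicate point — and the main obstacle — is precisely the control of the cross terms $\chi_{A_k}\Phi(c_j)\chi_{B_k}$ in the assembled operator, equivalently a uniform control of propagation that does not degrade when one conjugates by the orthogonalizing projections. This is exactly where the three hypotheses are used in concert: the $^*$-homomorphism property transports orthogonal projections of $\BD(X)$ to orthogonal projections of $\BD(Y)$, so that the diagonal blocks of $\Phi(a)$ are literally the $\Phi(c_k)$ and are shielded from the other summands; strong continuity drives the localization of the witnesses; and compact preservation bounds the multiplicity. It is also the ingredient genuinely unavailable for Roe algebras: in Proposition~\ref{PropNotCoarseLike} the image of the analogous construction escapes into the compact operators, which are automatically band-dominated, so no contradiction with membership in $\BD(Y)$ can be extracted. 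I expect the technical heart of the write-up to be making the orthogonal assembly compatible with the finitely supported witnesses produced in the localization step.
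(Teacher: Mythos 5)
There are two genuine gaps here, and either one is fatal on its own.

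First, the reverse implication of your opening reformulation is not available at this level of generality. The statement you need --- that uniform smallness of all far-off-diagonal blocks $\norm{\chi_A\Phi(a)\chi_B}$ forces uniform approximability of $\Phi(a)$ by finite-propagation operators --- is exactly the assertion that quasi-local operators are band-dominated, uniformly. On a general u.l.f.\@ space this is a well-known open problem; it is known under property A by \cite{SpakulaZhang2018} (which is precisely why the paper invokes quasi-locality only in the proof of Theorem~\ref{ThmIsoMakeAsympCoarseLike}, where property A \emph{is} assumed), but Theorem~\ref{ThmCoarseLikeBD} carries no property A hypothesis. Your proposed justification does not work: on a u.l.f.\@ space the truncation $c_s$ of $c$ to a band of width $s$ is indeed bounded (Schur test, with constant $\sup_y|B_s(y)|$), but nothing bounds $\norm{c-c_s}$ in terms of far blocks --- the off-band region cannot be covered by boundedly many product sets $A_i\times B_i$ with $d(A_i,B_i)$ large, and Schur multipliers implementing such cutoffs with uniformly bounded norm are exactly what property A manufactures. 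Your argument needs this hard direction at the very first step, to convert ``$\Phi(a_k)$ is not $\e$-$k$-approximable'' into a witness block $\norm{\chi_{A_k}\Phi(a_k)\chi_{B_k}}>\e$ with $\partial(A_k,B_k)$ large; without it the contradiction never gets started. (The exit of your contradiction, that members of $\BD(Y)$ have small far blocks, is the trivial direction and is fine; note the paper's own proof never passes through quasi-locality at all.)

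Second, the assembly step does not close. You cannot in general choose the supports $F_k$ pairwise disjoint: the failure may concentrate in a single corner $\chi_{\{x\}}\BD(X)\chi_{\{x\}}\cong\cB(H)$, and after the reduction of Lemma~\ref{LemmaCoarseLikeRA2} this concentrated case is the \emph{entire} content of the theorem, not a side case. Your fallback --- relocating the $c_k$ using the infinite multiplicity of $H$ --- replaces $c_k$ by $b_kc_kb_k^*$ for partial isometries $b_k$, and then the visible block of $\Phi(a)$ is $\Phi(b_kc_kb_k^*)$, not $\Phi(c_k)$; returning to $\Phi(c_k)$ requires propagation control on $\Phi(b_k)$, which is an instance of the very statement being proven. (The paper breaks this circularity with explicit bookkeeping: the bad operator $a_{m(n)}$ is chosen \emph{after} the approximation scale $k(n)$ of $\Phi(b_n)$ has been fixed.) Even in the genuinely disjoint case your shielding mechanism is incorrect: the projections $q_j=\Phi(\chi_{F_j}\otimes 1_H)$ are mutually orthogonal, but $\chi_{A_k}$ does not commute with $q_j$, so the cross terms $\chi_{A_k}\Phi(c_j)\chi_{B_k}=\chi_{A_k}q_j\Phi(c_j)q_j\chi_{B_k}$ are in no way orthogonal to the main term and can cancel it; moreover the $\Phi(c_j)$ need not be compact (the fibres of $c_j$ are infinite-dimensional), so the subsequence trick that controls cross terms in the proof of Theorem~\ref{ThmCoarseLike} is also unavailable. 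What replaces all of this in the paper is Lemma~\ref{Lemma4.9BD}, a Baire-category argument on $\D^\N$: if \emph{every} sum $\SOTh\sum_n\lambda_na_n$ with $(\lambda_n)_n\in\D^\N$ of a sequence of compacts lies in $\BD$, then the $a_n$ are uniformly approximable. That lemma --- not any orthogonality of blocks --- is the engine that turns the gliding hump into a contradiction, and your proposal contains no substitute for it.
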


We now proceed to prove Theorems~\ref{ThmCoarseLike} and \ref{ThmCoarseLikeBD}. First, we show that $\e$-$r$-approximability does not depend on the ambient algebra.
\begin{proposition}\label{PropApprox}
 Let $X$ be a u.l.f.\@ metric space, $r>0$, $\e>0$, and $a\in \cst(X)$. The following are equivalent: 
 \begin{enumerate}
 \item\label{ItemPropApprox1} $a$ is $\e$-$r$-approximable in $\BD(X)$, 
 \item\label{ItemPropApprox2} $a$ is $(\e+\delta)$-$r$-approximable in $\cst(X)$, for all $\delta>0$, and
 \item\label{ItemPropApprox3} $a$ is $(\e+\delta)$-$r$-approximable in $\BD(X)$, for all $\delta>0$.
 \end{enumerate}
\end{proposition}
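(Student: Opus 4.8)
The plan is to establish the cycle of implications $(\ref{ItemPropApprox1})\Rightarrow(\ref{ItemPropApprox2})\Rightarrow(\ref{ItemPropApprox3})\Rightarrow(\ref{ItemPropApprox1})$. The middle implication $(\ref{ItemPropApprox2})\Rightarrow(\ref{ItemPropApprox3})$ is immediate: any $c\in\cst(X)$ witnessing $(\e+\delta)$-$r$-approximability in $\cst(X)$ also lies in $\BD(X)$, and so witnesses the same approximation there. The genuine content lies in the other two implications. In $(\ref{ItemPropApprox1})\Rightarrow(\ref{ItemPropApprox2})$ one must replace a band-dominated approximant by a locally compact one at the cost of an arbitrarily small error, while $(\ref{ItemPropApprox3})\Rightarrow(\ref{ItemPropApprox1})$ must absorb the $\delta$-slack entirely; the latter is where I expect the main difficulty.

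For $(\ref{ItemPropApprox1})\Rightarrow(\ref{ItemPropApprox2})$, I would fix $c\in\BD(X)$ with $\propg(c)\le r$ and $\|a-c\|\le\e$, fix $\delta>0$, and conjugate $c$ by a member of the approximate identity $\{q_f\}$ from Proposition~\ref{prop:approxid}. Each $q_f$ is diagonal of propagation $0$ with finite-rank diagonal entries $p_{f(x)}$, so $q_fcq_f$ has propagation at most $r$ and each of its entries $p_{f(x)}c_{xy}p_{f(y)}$ is finite rank; hence $q_fcq_f\in\cst(X)$. Since $a\in\cst(X)$ and $\{q_f\}$ is an approximate identity, $\|a-q_faq_f\|\to 0$ along the net, so I may choose $f$ with $\|a-q_faq_f\|<\delta$. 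As the $q_f$ are contractions,
\[
\|a-q_fcq_f\|\le\|a-q_faq_f\|+\|q_f(a-c)q_f\|<\delta+\e,
\]
so $q_fcq_f$ witnesses $(\e+\delta)$-$r$-approximability in $\cst(X)$.

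The main obstacle is $(\ref{ItemPropApprox3})\Rightarrow(\ref{ItemPropApprox1})$, which I would handle by a weak-operator-topology (WOT) compactness argument. For each $\delta\in(0,1]$ choose $c_\delta\in\BD(X)$ with $\propg(c_\delta)\le r$ and $\|a-c_\delta\|\le\e+\delta$; then $\|c_\delta\|\le\|a\|+\e+1=:M$, so the net $(c_\delta)_\delta$ lies in the closed ball of radius $M$, which is WOT-compact by Alaoglu's theorem. Passing to a WOT-convergent subnet with limit $c$, the bound $\propg(c)\le r$ is preserved: for $d(x,y)>r$ one has $\langle c(\delta_x\otimes\xi),\delta_y\otimes\eta\rangle=\lim\langle c_\delta(\delta_x\otimes\xi),\delta_y\otimes\eta\rangle=0$, so $c$ has finite propagation and in particular $c\in\BD(X)$. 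Finally, the operator norm is WOT-lower-semicontinuous, being a supremum of the WOT-continuous functionals $T\mapsto\abs{\langle T\xi,\eta\rangle}$ over the relevant unit balls, whence $\|a-c\|\le\liminf_\delta\|a-c_\delta\|\le\e$; this yields $(\ref{ItemPropApprox1})$. The only subtlety worth flagging is that, since a u.l.f.\@ space need not be countable, one must work with nets and subnets rather than sequences throughout this last step.
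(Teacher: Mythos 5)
Your proof is correct and takes essentially the same approach as the paper's: (1)$\Rightarrow$(2) is obtained by cutting a band-dominated approximant with a projection from the approximate identity of Proposition~\ref{prop:approxid} (the paper multiplies on one side only, which already suffices for local compactness), and (3)$\Rightarrow$(1) by weak-operator compactness of bounded balls, preservation of propagation under $\WOTh$limits, and lower semicontinuity of the operator norm, which the paper phrases as a contradiction argument along the choices $\delta=1/n$. One correction to your closing remark: a u.l.f.\@ metric space is automatically countable --- every point lies at finite distance from a fixed basepoint $x_0$, so $X=\bigcup_{n\in\N}B_n(x_0)$ is a countable union of finite sets --- hence $\ell_2(X,H)$ is separable and the paper's sequence/subsequence formulation is legitimate; your use of nets is valid but the caution about uncountable u.l.f.\@ spaces is unnecessary.
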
 
 \begin{proof}
 \eqref{ItemPropApprox1}$\Rightarrow$\eqref{ItemPropApprox2} If $a$ is $\e$-$r$-approximable in $\BD(X)$, pick $b$ of propagation $r$ with $\norm{a-b}\leq\e$. Let $p\in\cst(X)$ be a projection with $\propg(p)=0$ and such that $\norm{pa-a}<\delta$. This exists by Proposition~\ref{prop:approxid}. Then $pb\in\cst(X)$, $\propg(pb)\leq r$ and \[\norm{a-pb}\leq \norm{a-pa+pa-pb}<\norm{a-b}+\delta/2<\e+\delta.\] 
 
 \eqref{ItemPropApprox2}$\Rightarrow$\eqref{ItemPropApprox3} This is immediate. 
 
 \eqref{ItemPropApprox3}$\Rightarrow$\eqref{ItemPropApprox1} For each $n\in\N$, pick $b_n\in \BD(X)$ with $\propg(b_n)\leq r$ and $\|a-b_n\|\leq \eps+1/n$. Then $(b_n)_n$ is bounded and, by compactness of the unit ball of $\cB(\ell_2(X,H))$ with respect to the weak operator topology, by going to a subsequence if necessary, we can define $b=\WOTh\lim_n b_n$. Clearly, $\propg(b)\leq r$, so we only need to notice that $\|a-b\|\leq \eps$. Suppose $\|a-b\|>\eps$. Pick unit vectors $\xi$ and $\zeta$ in $\ell_2(X,H)$, and $n\in\N$ so that $|\langle (a-b)\xi,\zeta\rangle|>\eps+1/n$. By the definition of $b$, there is $m>n$ so that $|\langle (a-b_m)\xi,\zeta\rangle|>\eps+1/n$. As $m>n$, this implies that $\|a-b_m\|> \eps+1/n$; contradiction.
 \end{proof}

The set of $\e$-$r$-approximable elements is strongly closed:
 
 \begin{proposition}\label{PropSOTConvApproxBD}
 Let $X$ be a u.l.f.\@ metric space, $r>0$, $\eps>0$, $a\in \cB(\ell_2(X,H))$ and let $(a_n)_n$ be a sequence in $\BD(X)$ so that $a=\SOTh\lim a_n$. If each $a_n$ is $\eps$-$r$-approximable in $\BD(X)$, then $a$ is $\eps$-$r$-approximable in $\BD(X)$. 
 \end{proposition}

\begin{proof}
As each $a_n$ is $\eps$-$r$-approximable in $\BD(X)$, pick a sequence $(b_n)_n$ in $\BD(X)$ so that $\propg(b_n)\leq r$ and $\|a_n-b_n\|\leq \eps$ for all $n\in\N$. As $a=\SOTh\lim_na_n$, the principle of uniform boundedness implies that $(a_n)_n$ is a bounded sequence, hence so is $(b_n)_n$. By compactness of the unit ball of $\cB(\ell_2(X,H))$ in the weak operator topology, going to a subsequence if necessary, we can assume that $b=\WOTh\lim_nb_n$ exists. Clearly, $\propg(b)\leq r$ and $\|a-b\|\leq \eps$, so we are done. 
 \end{proof}

Propositions~\ref{PropApprox} and~\ref{PropSOTConvApproxBD} together give the following:

\begin{proposition}\label{PropSOTConvApprox}
 Let $X$ be a u.l.f.\@ metric space, $r>0$, $\eps>0$, $a\in\cst(X)$ and let $(a_n)_n$ be a sequence in $\BD(X)$ so that $a=\SOTh\lim a_n$. If each $a_n$ is $\eps$-$r$-approximable in $\cst(X)$, then $a$ is $(\eps+\delta)$-$r$-approximable in $\cst(X)$ for all $\delta>0$. \qed
 \end{proposition}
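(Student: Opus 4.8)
The plan is to deduce this statement directly by chaining Propositions~\ref{PropApprox} and~\ref{PropSOTConvApproxBD}, the only genuine manoeuvre being to shuttle between the two ambient algebras $\BD(X)$ and $\cst(X)$ at the cost of an arbitrarily small $\delta$.

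First I would observe that being $\eps$-$r$-approximable in $\cst(X)$ trivially implies being $\eps$-$r$-approximable in $\BD(X)$: any witness $c\in\cst(X)$ with $\propg(c)\le r$ and $\|a_n-c\|\le\eps$ also lies in $\BD(X)$, since $\cst(X)\subseteq\BD(X)$. Thus the hypothesis immediately upgrades to: each $a_n$ is $\eps$-$r$-approximable in $\BD(X)$.

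Next, since $(a_n)_n$ is a sequence in $\BD(X)$ with $a=\SOTh\lim a_n$ and each $a_n$ is now known to be $\eps$-$r$-approximable in $\BD(X)$, Proposition~\ref{PropSOTConvApproxBD} applies verbatim and yields that $a$ is $\eps$-$r$-approximable in $\BD(X)$. This is the step carrying the real analytic content --- the extraction, via weak-operator compactness of the unit ball, of a cluster point $b$ of the approximants with $\propg(b)\le r$ and $\|a-b\|\le\eps$ --- but that work has already been packaged in the proof of Proposition~\ref{PropSOTConvApproxBD}, so here it is simply cited.

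Finally, because $a\in\cst(X)$, I can invoke Proposition~\ref{PropApprox}: the previous step is exactly its condition~\eqref{ItemPropApprox1}, and the implication \eqref{ItemPropApprox1}$\Rightarrow$\eqref{ItemPropApprox2} gives precisely that $a$ is $(\eps+\delta)$-$r$-approximable in $\cst(X)$ for every $\delta>0$. Since each link in this chain is either immediate or already established, I do not expect any obstacle; indeed the two preparatory propositions were engineered exactly so that this statement falls out by composition, which is why it is marked \qed in the text.
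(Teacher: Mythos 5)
Your proposal is correct and is exactly the paper's intended argument: the paper introduces this proposition with the sentence ``Propositions~\ref{PropApprox} and~\ref{PropSOTConvApproxBD} together give the following'' and marks it with \qed, leaving precisely your chain (approximability in $\cst(X)$ trivially passes to $\BD(X)$, then Proposition~\ref{PropSOTConvApproxBD} handles the SOT-limit, then Proposition~\ref{PropApprox}\eqref{ItemPropApprox1}$\Rightarrow$\eqref{ItemPropApprox2} returns to $\cst(X)$ at the cost of $\delta$) to the reader. Nothing is missing.
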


Next, we study how strong convergence and $\e$-$r$-approximability interact. We prove the Roe algebra and the band-dominated algebra versions of \cite[Lemma 4.9]{BragaFarah2018}. Let $\D=\{z\in \C\mid |z|\leq 1\}$. If $(a_n)_n$ is a sequence of operators and $\bar\lambda\in\D^\N$ is such that $\SOTh\sum_n\lambda_na_n$ exists, we write 
\[
a_{\bar\lambda}=\SOTh\sum_n\lambda_na_n.
\]
When writing $a_{\bar\lambda}$, it is implicit that the limit above exists.
 \begin{lemma}\label{Lemma4.9BD}
 Let $X$ be a u.l.f.\@ metric space and let $(a_n)_n$ be a sequence of compact operators in $\BD(X)$ so that $a_{\bar\lambda}\in \BD(X)$ for all $\bar\lambda\in \D^\N$. Then for all $\eps>0$ there is $r>0$ so that $a_{\bar\lambda}$ is $\eps$-$r$-approximable in $\BD(X)$ for all $\bar\lambda\in \D^\N$.
 \end{lemma}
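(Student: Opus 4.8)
The plan is to argue by contradiction, following the strategy of the uniform Roe algebra case in \cite{BragaFarah2018} but inserting compactness of the $a_n$ at the two places where, for $\BD(X)$, the $H$-direction must be controlled. For $r>0$ and $a\in\BD(X)$ write $\phi_r(a)=\inf\{\norm{a-c}\mid c\in\BD(X),\ \propg(c)\le r\}$, so that $a$ is $\eps$-$r$-approximable exactly when $\phi_r(a)\le\eps$, and $a\in\BD(X)$ iff $\phi_r(a)\to0$ as $r\to\infty$. Suppose the conclusion fails: there is $\eps>0$ so that for every $r\in\N$ there is $\bar\lambda^{(r)}\in\D^\N$ with $\phi_r(a_{\bar\lambda^{(r)}})>\eps$. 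I would splice the $\bar\lambda^{(r)}$ into a single $\bar\lambda\in\D^\N$ for which $\phi_r(a_{\bar\lambda})>\eps/4$ for all large $r$; since $\phi_r$ is nonincreasing in $r$, this forces $a_{\bar\lambda}\notin\BD(X)$, contradicting the hypothesis.

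First come two reductions. By Proposition \ref{PropSOTConvApproxBD} the set $\{a:\phi_r(a)\le\eps\}$ is SOT-closed, and the partial sums $\sum_{n\le N}\lambda^{(r)}_n a_n$ converge SOT to $a_{\bar\lambda^{(r)}}$; hence some finite partial sum already has $\phi_r>\eps$, so each witness may be taken to be a finite-index sum $b=\sum_{n\in F}\lambda_n a_n$, a \emph{compact} operator in $\BD(X)$. To make the index sets of successive witnesses disjoint (so that the final $\bar\lambda$ is well defined) I use an index-tail trick: having committed indices $\le M$, I zero out those coordinates of a new witness; the discarded part $\sum_{n\le M}\lambda_n a_n$ is a fixed compact element of $\BD(X)$, hence $\delta$-approximable at some finite scale, so removing it lowers $\phi_r$ by at most $\delta$ and non-approximability is preserved on the high-index tail.

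Compactness enters again to localize chunks spatially. Because $X$ is u.l.f., for each $r$ the truncation $\Theta_r$ to the band $\{d(x,y)\le r\}$ is a bounded Schur multiplier (its pattern has row and column sums bounded by $\sup_x|B_r(x)|$), and $\phi_r(a)\le\norm{a-\Theta_r a}\le(1+\norm{\Theta_r})\phi_r(a)$; thus non-approximability of a chunk $b$ means its far part $(I-\Theta_r)b$ has norm bounded below. Since $b$ is compact, $\norm{b\,\chi_Z}\to0$ and $\norm{\chi_Z b}\to0$ as finite sets $Z$ exhaust $X$ (a compact operator composed with projections tending to $0$ in SOT tends to $0$ in norm); consequently each chunk is, up to arbitrarily small norm error, supported on a finite set $E$, and the mass of $b$ meeting a fixed finite window and lying far from it is norm-negligible. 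This is exactly the ingredient that fails in Proposition \ref{PropNotCoarseLike} and is automatic in the scalar (uniform Roe) setting. Using these estimates one excises, from each new witness, the rows and columns through the finite union $G$ of the previously used essential supports while keeping $\phi_r$ above $\eps/2$ at a suitable scale, and thereby produces witnesses $b_k=\sum_{n\in F_k}\lambda^{(k)}_na_n$ with the $F_k$ pairwise disjoint, scales $r_k\to\infty$, and pairwise disjoint finite essential supports $E_k$ carrying summable localization errors.

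Setting $\lambda_n=\lambda^{(k)}_n$ for $n\in F_k$ and $\lambda_n=0$ otherwise gives $a_{\bar\lambda}=\SOTh\sum_k b_k$. Fixing a large $r$ and choosing $k$ with $r_k\ge r$, disjointness of the $E_k$ and summability of the errors yield $\norm{\chi_{E_k}a_{\bar\lambda}\chi_{E_k}-b_k}<\eps/4$; since compressions by $\chi_{E_k}$ do not increase propagation, any propagation-$\le r$ operator approximating $a_{\bar\lambda}$ would, after compression, approximate $b_k$, forcing $\phi_r(b_k)\le\phi_r(a_{\bar\lambda})+\eps/4$. As $\phi_r(b_k)>\eps/2$, this gives $\phi_r(a_{\bar\lambda})>\eps/4$ for all large $r$, hence $a_{\bar\lambda}\notin\BD(X)$, the desired contradiction. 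The main obstacle is precisely the middle step: since the $a_n$ are fixed one cannot relocate mass, so arranging the \emph{full} chunks $b_k$ (not merely their excised versions, which alone survive into $a_{\bar\lambda}$) to have pairwise disjoint essential supports while retaining non-approximability at a controlled scale — reconciling the need for large-diameter supports carrying the far mass with the need to avoid the growing finite region $G$ — is the delicate bookkeeping on which the whole argument turns, and the compactness estimates above are what make it possible.
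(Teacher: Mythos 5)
Your architecture is genuinely different from the paper's: the paper argues by Baire category on $\D^\N$ (the sets $U_{r,\delta}$ of coefficient sequences whose compressions $(1_X\otimes p_j)a_{\bar\lambda}(1_X\otimes p_j)$ are all $\delta$-$r$-approximable are shown to be closed with empty interior, yet to cover $\D^\N$), whereas you negate the statement, extract finite-index ``chunk'' witnesses and splice them. That architecture can be made to work, and your first reduction (finite-index witnesses via Proposition \ref{PropSOTConvApproxBD}) and your final compression step are fine; even the head-removal step is fixable, although as written it has the wrong quantifiers (the ``discarded part'' depends on the witness being processed, so you need norm-compactness of $\{\sum_{n\le M}\lambda_na_n\mid\bar\lambda\in\D^M\}$ to fix one approximation scale for \emph{all} heads before choosing the witness scale $r$). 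The genuine gap is exactly the step you flag as the main obstacle, and the compactness estimates you cite do not close it. The excised operator $(1-\chi_G)b(1-\chi_G)$ is no longer of the form $\sum_{n\in F}\lambda_na_n$, so it cannot serve as a chunk --- only honest sums of the $a_n$ survive into $a_{\bar\lambda}$ --- and no argument is given that a bona fide witness of this form with essential support avoiding $G$ exists.

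Moreover, the uniform estimate that would be needed to produce such witnesses is false for spatial windows: it is not true that $\norm{\chi_E\bigl(\sum_{n\in F}\lambda_na_n\bigr)\chi_E}$ is small for all tuples supported on sufficiently high indices, even under the hypotheses of the lemma. Take $a_n=\chi_{\{x\}}\otimes e_ne_n^*$ for a fixed $x\in X$ and an orthonormal basis $(e_n)_n$ of $H$: each $a_n$ is compact, every $a_{\bar\lambda}$ has propagation zero (so the hypothesis of Lemma \ref{Lemma4.9BD} holds), yet every tail has norm one on the window $E=\{x\}$. Compactness of a single chunk $b$ gives $\norm{b\,\chi_{X\setminus Z}}\to 0$ at a rate depending on $b$, which is the wrong quantifier order: in the induction you need control over chunks not yet constructed. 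In fact, the uniform approximability of $\{\chi_G a_{\bar\mu}\mid\bar\mu\in\D^\N\}$ that your excision step requires is itself an instance of the lemma being proved, applied to the sequence $(\chi_Ga_n)_n$, so invoking it is circular. The repair is to localise with finite-\emph{rank} propagation-zero projections $p$ instead of the projections $\chi_E$ (compression by these also does not increase propagation): for such $p$ the tail estimate $\norm{p\,a_{\bar\mu}\,p}<\delta$ for all $\bar\mu$ supported on high indices is true, because otherwise the compressions of the partial sums of a suitably spliced $\bar\mu$ would fail to be norm-Cauchy in the finite-dimensional space $p\,\cB(\ell_2(X,H))\,p$, contradicting SOT-convergence. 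But then the bookkeeping must be reorganised --- earlier chunks handled as heads via compactness of $\D^M$, later ones via the finite-rank tail estimate, one window at a time --- since disjointness of spatial supports is no longer available. As written, the proposal is not a proof.
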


\begin{proof}
 Let $(p_j)_j$ be a sequence of finite rank projections on $H$ which strongly converges to $1_H$. Let $1_X$ denote the identity on $\ell_2(X)$ and notice that $a=\SOTh \lim_j(1_X\otimes p_j) a (1_X\otimes p_j)$. Given $\bar\lambda=(\lambda_n)_n\in \D^\N$ and $j\in\N$, let \[a_{\bar\lambda,j}= (1_X\otimes p_j)a_{\bar\lambda} (1_X\otimes p_j),
 \]
and $a_{\bar\lambda,\infty}=a_{\bar\lambda}$, so that $a_{\bar\lambda,\infty}=\SOTh\lim_j a_{\bar\lambda,j}$ for all $\bar\lambda\in \D^\N$. By Proposition \ref{PropSOTConvApprox}, it is enough to show that
 \begin{itemize}
 \item[$(\ast)$]\label{ItemStatement} for all $\eps>0$ there is $r>0$ so that $a_{\bar\lambda,j} $ is $\eps$-$r$-approximable in $\BD(X)$ for all $\bar\lambda\in \D^\N$ and all $j\in\N$. 
 \end{itemize}
Suppose ($\ast$) fails for $\eps>0$. For each finite $I\subset \N$, let
 \[\cZ_I=\Big\{\bar\lambda\in \D^\N\mid \forall j\in I, \lambda_j=0\Big\}\text{ and }\cY_I=\Big\{\bar\lambda\in \D^\N\mid \forall j\not\in I, \lambda_j=0\Big\}.\]
So $\cY_I$ is compact in the product space $\D^\N$.

\begin{claim}\label{Claim1Lemma4.9}
 For all $r>0$ and all finite $I\subset \N$, there exist $\bar\lambda\in \cZ_I$ and $j\in\N$ so that $a_{\bar\lambda,j}$ is not $\eps/2$-$r$-approximable in $\BD(X)$.
\end{claim} 

\begin{proof}
Suppose the claim fails and let $r>0$ and $I\subset \N$ witness that. Let $\N_\infty$ be the one-point compactification of $\N$. Notice that the map 
\[(\bar\lambda,j)\in \cY_I\times \N_\infty\mapsto a_{\bar\lambda,j}\in \BD(X)\]
is continuous. Indeed, the map is clearly continuous on $\cY_I\times \N$ and continuity on $\cY_I\times \{\infty\}$ follows since each $a_n$ is compact; therefore, $a_n=\lim_j(1_X\otimes p_j)a_n(1_X\otimes p_j)$ for all $n\in\N$ and it follows that $a_{\bar\lambda,\infty}=\lim_ja_{\bar\lambda, j}$ for all $\bar\lambda\in \cY_I$. The continuity of this map and the compactness of $\cY_I\times \N_\infty$ imply that 
 $\{a_{\bar\lambda,j}\mid \bar\lambda\in \cY_I, j\in\N_\infty\}$ is a norm compact subset of $\BD(X)$. In particular, the total boundedness of this set gives $s>0$ so that $a$ is $\eps/2$-$s$-approximable in $\BD(X)$ for all $a\in 
 \{a_{\bar\lambda,j}\mid \bar\lambda\in \cY_I, j\in\N\}$.

Let $\bar\lambda\in \D^\N$ and $j\in\N$. Write $\bar\lambda=\bar\lambda_1+\bar\lambda_2$ for $\bar\lambda_1\in \cY_I$ and $\bar\lambda_2\in 
 \cZ_I$, so $a_{\bar\lambda,j}=a_{\bar\lambda_1,j}+a_{\bar\lambda_2,j}$. As $r$ and $I$ witness that the claim fails, $a_{\bar\lambda_2,j}$ is $\eps/2$-$r$-approximable in $\BD(X)$. Moreover, our choice of $s$ implies that $a_{\bar\lambda_1,j}$ is $\eps/2$-$s$-approximable in $\BD(X)$; hence $a_{\bar\lambda,j}$ is $\eps$-$\max\{r,s\}$-approximable in $\BD(X)$. As $\bar\lambda\in \D^\N$ and $j\in\N$ were arbitrary, this contradicts that $(\ast)$ fails for $\eps$.
\end{proof}
 
 For $r>0$ and $\delta>0$, let \[U_{r,\delta}=\Big\{\bar\lambda \in \D^\N \mid a_{\bar\lambda,j} \text{ is }\delta\text{-}r\text{-approximable in }\BD(X)\text{ for all }j\in\N\Big\}.\] 
 
 \begin{claim}\label{claim:NWD1}
The set $U_{r,\delta}$ is closed for all $r>0$ and $\delta>0$. 
 \end{claim}
 
 \begin{proof}
Suppose the claim fails for $r>0$ and $\delta>0$. Then there is $\bar\lambda \in U^\complement_{r,\delta}\cap \overline{U_{r,\delta}}$. As $\bar\lambda \not\in U_{r,\delta}$, there is $j\in\N$ so that $a_{\bar\lambda,j}$ is not $\delta$-$r$-approximable in $\BD(X)$. Proposition \ref{PropSOTConvApprox} implies that there is a finite $F\subset X$ so that $\chi_{F}a_{\bar\lambda,j}\chi_{F} $ is not $\delta$-$r$-approximable in $\BD(X)$. 

Fix $\gamma>0$. By the definition of $a_{\bar \theta,j}$, $\chi_{F}a_{\bar\theta,j}\chi_{F}=(\chi_{F}\otimes p_j) a_{\bar\theta}(\chi_{F}\otimes p_j)$; since  $\chi_{F}\otimes p_j$ is compact, then there exists a finite $I\subset \N$ so that $\|\chi_{F}a_{\bar\theta,j}\chi_{F}\|<\gamma$ for all $\bar\theta\in \cZ_I$. Indeed, this follows since 
\[\chi_{F}a_{\bar\theta,j}\chi_{F}=\SOTh\sum_n\theta_n(\chi_{F}\otimes p_j)a_n (\chi_{F}\otimes p_j) \]
for all $\bar\theta\in \D^\N$. Let $\bar\lambda_1\in \cY_I$ and $\bar\lambda_2\in \cZ_I$ be so that $\bar\lambda=\bar\lambda_1+\bar\lambda_2$. As $\bar\lambda\in \overline{U_{r,\delta}}$, there exists $\bar\theta_1\in \cY_I$ and $\bar\theta_2\in \cZ_I$ so that $\bar\theta=\bar\theta_1+\bar\theta_2\in U_{r,\delta}$ and $\|a_{\bar\lambda_1,j}-a_{\bar\theta_1,j}\|\leq \gamma$. As $\bar\theta\in U_{r,\delta}$, $a_{\bar\theta,j}$ is $\delta$-$r$-approximable in $\BD(X)$. In particular, as $\chi_{F}\otimes p_m$ is a projection with propagation $0$, $\chi_{F}a_{\bar\theta,j}\chi_{F}$ is $\delta$-$r$-approximable in $\BD(X)$. Therefore, since 
\[a_{\bar\lambda,j}=a_{\bar\theta,j}+a_{\bar\lambda_1,j}-a_{\bar\theta_1,j}+a_{\bar\lambda_2,j}-a_{\bar\theta_2,j},\]
then $\chi_{F}a_{\bar\lambda,j}\chi_{F}$ is $(\delta+3\gamma)$-$r$-approximable in $\BD(X)$. As $\gamma$ was arbitrary, Proposition \ref{PropApprox} implies that $\chi_{F}a_{\bar\lambda,j}\chi_{F}$ is $\delta $-$r$-approximable in $\BD(X)$; contradiction.
 \end{proof}
Fix $\delta=\eps/4$. 
 
\begin{claim}\label{claim:NWD2}
 For all $r>0$, the set $U_{r,\delta}$ has empty interior. 
\end{claim} 
 
\begin{proof}
Fix $r>0$ and let $\bar\lambda\in U_{r,\delta}$. Fix a finite $I\subset \N$ and write $\bar\lambda=\bar\lambda_1+\bar\lambda_2$, for $\bar\lambda_1\in \cY_I$ and $\bar\lambda_2\in \cZ_I$. Pick $s>r$ so that $a_{\bar\lambda_1, j}$ is $\delta$-$s$-approximable in $\BD(X)$. By Claim \ref{Claim1Lemma4.9}, there is $\bar\theta_2\in \cZ_I$ and $j\in\N$ so that $a_{\bar\theta_2,j}$ is not $2\delta$-$s$-approximable in $\BD(X)$. Hence, letting $\bar\theta=\bar\lambda_1+\bar\theta_2$, we have that $a_{\bar\theta,j}$ is not $\delta$-$s$-approximable in $\BD(X)$. As $s>r$, $a_{\bar\theta,j}$ is not $\delta$-$r$-approximable in $\BD(X)$, so $\bar\theta\not\in U_{r,\delta}$. Since $I$ was an arbitrary finite subset of $\N$, this shows that $\bar\lambda$ is not an interior point of $U_{r,\delta}$.
\end{proof} 
By Claim~\ref{claim:NWD1} and \ref{claim:NWD2}, $U_{r,\delta}$ is nowhere dense for all $r>0$. However
\[\D^\N=\bigcup_{n\in\N}U_{n,\delta}.\]
Indeed, if $\bar\lambda\in \D^\N$, then there is $n\in\N$ so that $a_{\bar\lambda,\infty}$ is $\delta$-$n$-approximable in $\BD(X)$. In particular, $a_{\bar\lambda,j}$ is $\delta$-$n$-approximable in $\BD(X)$ for all $j\in\N$, so $\bar\lambda\in U_{n,\delta}$. Since $\D^\N$ is a Baire space, we have a contradiction.
\end{proof}

\begin{remark}
The Baire categorical nature of the proof of Lemma \ref{Lemma4.9BD} implies that its statement holds outside the scope of metrizable coarse spaces (for brevity, we refer the reader to \cite{BragaFarah2018} for definitions). Indeed, if $(X,\cE)$ is a coarse space which is \emph{small} (see \cite[Definition 4.2]{BragaFarah2018}), Lemma \ref{Lemma4.9BD} still holds. In particular, Theorems \ref{ThmCoarseLike} and \ref{ThmCoarseLikeBD} also hold for small coarse spaces.
\end{remark}

 The following is the Roe algebra version of Lemma~\ref{Lemma4.9BD}.
 
 \begin{lemma}\label{Lemma4.9}
 Let $X$ be a u.l.f.\@ metric space and let $(a_n)_n$ be a sequence of compact operators in $\cst(X)$ so that $a_{\bar\lambda}\in \cst(X)$ for all $\bar\lambda\in \D^\N$. Then for all $\eps>0$ there is $r>0$ so that $a_{\bar\lambda}$ is $\eps$-$r$-approximable in $\cst(X)$ for all $\bar\lambda\in \D^\N$. 
 \end{lemma}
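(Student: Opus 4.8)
The plan is to deduce this statement directly from its band-dominated counterpart, Lemma~\ref{Lemma4.9BD}, exploiting that $\e$-$r$-approximability of an element of $\cst(X)$ is insensitive to whether one approximates inside $\cst(X)$ or inside the larger algebra $\BD(X)$ (this is precisely the content of Proposition~\ref{PropApprox}). The first thing to check is that the hypotheses transfer: since $\cst(X)\subseteq\BD(X)$, every $a_n$ is a compact operator lying in $\BD(X)$, and the standing assumption $a_{\bar\lambda}\in\cst(X)$ gives $a_{\bar\lambda}\in\BD(X)$ for all $\bar\lambda\in\D^\N$. Hence $(a_n)_n$ satisfies the hypotheses of Lemma~\ref{Lemma4.9BD}.

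With this in hand, fix $\e>0$ and apply Lemma~\ref{Lemma4.9BD} to $(a_n)_n$ with the parameter $\e/2$, obtaining a single $r>0$ such that $a_{\bar\lambda}$ is $(\e/2)$-$r$-approximable in $\BD(X)$ for every $\bar\lambda\in\D^\N$. Since each $a_{\bar\lambda}$ lies in $\cst(X)$, the implication \eqref{ItemPropApprox1}$\Rightarrow$\eqref{ItemPropApprox2} of Proposition~\ref{PropApprox} upgrades this to $(\e/2+\delta)$-$r$-approximability in $\cst(X)$ for every $\delta>0$; taking $\delta=\e/2$ shows that $a_{\bar\lambda}$ is $\e$-$r$-approximable in $\cst(X)$, with the same $r$ for all $\bar\lambda\in\D^\N$. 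As $\e>0$ was arbitrary, this is exactly the assertion.

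I expect no genuine obstacle here, since all the real content --- the Baire category argument producing a uniform bound $r$ at all --- is already packaged in Lemma~\ref{Lemma4.9BD}, which applies verbatim because $\cst(X)\subseteq\BD(X)$. The only points requiring care are the elementary constant bookkeeping (splitting the error budget as $\e/2+\e/2$ so as to absorb the $\delta$ that appears when passing between the two algebras) and the observation underlying Proposition~\ref{PropApprox} that a propagation-$0$ projection drawn from the approximate identity of Proposition~\ref{prop:approxid} lets one replace an approximant in $\BD(X)$ by one in $\cst(X)$ at the cost of an arbitrarily small error.
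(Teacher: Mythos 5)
Your proposal is correct and follows essentially the same route as the paper: the paper likewise applies Lemma~\ref{Lemma4.9BD} with error $\eps/2$ to obtain a uniform $r$ for approximability in $\BD(X)$, and then invokes Proposition~\ref{PropApprox} to convert this into $\eps$-$r$-approximability in $\cst(X)$. Your additional remarks (verifying that the hypotheses transfer, and tracking the $\delta$ via the propagation-$0$ projection from Proposition~\ref{prop:approxid}) are exactly the details the paper leaves implicit.
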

\begin{proof}
By Lemma~\ref{Lemma4.9BD}, we know that for all $\e>0$ there is $r$ such that $a_{\bar\lambda}$ is $\eps/2$-$r$-approximable in $\BD(X)$ for all $\bar\lambda\in \D^\N$. Given $\bar\lambda\in\D^\N$, Proposition~\ref{PropApprox} implies that $a_{\bar\lambda}$ is $\e$-$r$-approximable in $\cst(X)$. Since $\e$ and $\bar\lambda$ are arbitrary, we are done.
\end{proof}

 \begin{lemma}\label{LemmaCoarseLikeRA}
Let $X$ and $Y$ be u.l.f.\@ metric spaces and let $\Phi\colon\cst(X)\to \cst(Y)$ be a strongly continuous compact preserving linear map. If $\Phi\restriction \chi_F\cst(X)\chi_F$ is coarse-like for all finite $F\subset X$, then $\Phi$ is coarse-like.
\end{lemma}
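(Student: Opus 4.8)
The plan is to argue by contradiction, reducing the failure of coarse-likeness to a single well-structured sequence of finitely supported contractions on which either the hypothesis (coarse-likeness on corners) or the Baire-category input of Lemma~\ref{Lemma4.9} must apply. Suppose $\Phi$ is not coarse-like, witnessed by $\eps_0,r_0>0$: for every $s$ there is a contraction of propagation $\le r_0$ whose image is not $\eps_0$-$s$-approximable in $\cst(Y)$. Taking $s=n$ produces contractions $a_n$ with $\propg(a_n)\le r_0$ and $\Phi(a_n)$ not $\eps_0$-$n$-approximable. First I would localize each $a_n$ to a finite region: since $\chi_E a_n\chi_E\to a_n$ strongly as $E$ exhausts $X$ through an increasing sequence of finite sets, strong continuity of $\Phi$ together with Proposition~\ref{PropSOTConvApprox} (the set of $\eps$-$r$-approximable elements is strongly closed) forces some finite $E_n\subseteq X$ with $b_n:=\chi_{E_n}a_n\chi_{E_n}$ a contraction of propagation $\le r_0$ and $\Phi(b_n)$ not $\tfrac{\eps_0}{2}$-$n$-approximable. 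Each $b_n$ is compact, having finite support and compact entries.

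The heart of the argument is a greedy construction yielding a dichotomy. I would build finite sets $\emptyset=S_0\subseteq S_1\subseteq\cdots$ and a decreasing chain of infinite index sets as follows. Given a finite region $S$, split each surviving $b_n$ as $b_n=\mathrm{near}_n+\mathrm{away}_n$, where $\mathrm{away}_n=\chi_{X\setminus S}\,b_n\,\chi_{X\setminus S}$ is supported off $S$ and $\mathrm{near}_n=b_n-\mathrm{away}_n$ is supported on $B_{r_0}(S)\times B_{r_0}(S)$, a finite set by uniform local finiteness and $\propg(b_n)\le r_0$. Both summands are sums of compressions of $b_n$, hence have propagation $\le r_0$, with $\|\mathrm{near}_n\|\le 2$. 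Since $\Phi$ is linear and $\Phi(b_n)$ is not $\tfrac{\eps_0}{2}$-$n$-approximable, for each $n$ at least one of $\Phi(\mathrm{near}_n),\Phi(\mathrm{away}_n)$ fails to be $\tfrac{\eps_0}{4}$-$n$-approximable. If infinitely many surviving indices have bad away-part, I pick such an $n_{k+1}$ with $n_{k+1}$ large, set $\beta_{k+1}:=\mathrm{away}_{n_{k+1}}$, enlarge $S_{k+1}:=S_k\cup E_{n_{k+1}}$, and continue; by construction $\supp(\beta_{k+1})\subseteq (X\setminus S_k)^2$, so the $\beta_k$ have pairwise disjoint supports.

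Two outcomes are possible. If the process never halts, the $\beta_k$ are finitely supported compact contractions with pairwise disjoint supports and $\propg\le r_0$, whence $\beta_{\bar\lambda}=\SOTh\sum_k\lambda_k\beta_k\in\cst(X)$ for every $\bar\lambda\in\D^\N$ (disjointness forces compact entries, and the propagation stays $\le r_0$); by strong continuity and linearity $\SOTh\sum_k\lambda_k\Phi(\beta_k)=\Phi(\beta_{\bar\lambda})\in\cst(Y)$, so Lemma~\ref{Lemma4.9} applied to the compact sequence $(\Phi(\beta_k))_k$ yields a single $s$ making each $\Phi(\beta_k)$ $\tfrac{\eps_0}{4}$-$s$-approximable, contradicting non-approximability once $n_k\ge s$. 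If instead the process halts at stage $k$, then infinitely many surviving indices have bad near-part, i.e.\ $\Phi(\mathrm{near}_n)$ is not $\tfrac{\eps_0}{4}$-$n$-approximable, while each $\tfrac12\mathrm{near}_n$ is a contraction of propagation $\le r_0$ lying in the fixed finite corner $\chi_B\cst(X)\chi_B$ with $B=B_{r_0}(S_k)$; here the hypothesis that $\Phi\restriction\chi_B\cst(X)\chi_B$ is coarse-like supplies one $s_B$ with all such images $\tfrac{\eps_0}{8}$-$s_B$-approximable, a contradiction for $n\ge s_B$. As both outcomes are impossible, $\Phi$ is coarse-like.

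The main obstacle is precisely the uniformity encoded in this dichotomy. Lemma~\ref{Lemma4.9} alone cannot deliver a propagation bound independent of the operator, as Proposition~\ref{PropNotCoarseLike} shows, so the construction must be arranged so that whatever cannot be dispersed into a disjointly supported sequence (handled by Lemma~\ref{Lemma4.9}) is instead trapped inside a single finite corner (handled by the hypothesis). Keeping the $\beta_k$ genuinely disjoint while preserving non-approximability at each stage, and distributing the error budget $\eps_0\rightsquigarrow\tfrac{\eps_0}{2}\rightsquigarrow\tfrac{\eps_0}{4}\rightsquigarrow\tfrac{\eps_0}{8}$ so that both branches close, is the delicate part of the proof.
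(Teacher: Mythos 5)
Your proof is correct and is essentially the paper's own argument: the same near/far splitting of a finite-propagation contraction relative to a finite set (with the near part trapped in a finite corner thanks to uniform local finiteness and $\propg\le r_0$), the same use of the corner hypothesis, and the same final contradiction obtained by feeding a block-disjointly supported sequence of bad elements into Lemma~\ref{Lemma4.9}. The only difference is organizational: the paper applies the corner hypothesis up front, inside a claim showing that badness can always be pushed off any finite set (so its construction deterministically produces far-supported bad elements), whereas you defer the hypothesis to the halting branch of a greedy dichotomy --- the two formulations are logically interchangeable.
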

\begin{proof}
Suppose that $\Phi\restriction\chi_E\cst(X)\chi_E$ is coarse-like for all finite $E\subseteq X$ but $\Phi$ is not coarse-like. Let $\e>0$ and $r>0$ be such that for every $s>0$ there is a contraction $a_s\in\cst(X)$ of propagation at most $ r$ such that $\Phi(a_s)$ is not $\e$-$s$-approximable.
\begin{claim}
For all cofinite $F\subset X$ and all $s>0$ there is a contraction $a\in \chi_F\cst(X)\chi_F$ with finite support and propagation at most $r$ so that $\Phi(a)$ is not $\eps/2$-$s$-approximable. 
\end{claim}
 
\begin{proof}
Fix $F$ and $s$. Let 
\[
E=\Big\{x\in X\mid d(x, X\setminus F)\leq r\Big\}.
\]
As $X\setminus F $ is finite and $X$ is u.l.f., $E$ is finite, and therefore $\Phi\restriction \chi_E\cst(X)\chi_E$ is coarse-like. Pick $s'>s$ so that $\Phi(\chi_Ea\chi_E)$ is $\eps/2$-$s'$-approximable for all contractions $a\in\cst(X)$. 

By the definition of $E$, we have that 
\[
\{(x,y)\in X^2\mid d(x,y)\leq r\}\subset (E\times E)\cup (F\times F).
\] 
Hence, if $a\in\cst(X)$ has propagation at most $ r$ then there is $b\in\chi_E\cst(X)\chi_E$ such that $a=b+\chi_Fa\chi_F$. If $a$ is a contraction, then $\norm{b}\leq 2$ and $\propg(b)\leq r$. Let $b\in\chi_E\cst(X)\chi_E$ be as above for $a=a_{s'}$. By our choice of $s'$, $\Phi(b)$ is $\eps/2$-$s'$-approximable. As $s'>s$, if $\Phi(\chi_Fa\chi_F)$ is $\e/2$-$s$-approximable, then $\Phi(a)$ is $\eps$-$s'$-approximable. This contradicts our choice of $a_s$, so $\Phi(\chi_Fa_s\chi_F)$ is not $\e/2$-$s$-approximable. By Proposition \ref{PropSOTConvApprox}, we can obtain a finite $G\subset X\setminus F$ so that $\Phi(\chi_Ga_s\chi_G)$ is not $\e/2$-$s$-approximable. This finishes that claim.
\end{proof} 

By the previous claim, there exists a disjoint sequence of finite subsets $(E_n)_n$ of $X$ and a sequence $(a_n)_n$ in $\cst(X)$ so that $a_n\in \cB(\ell_2(E_n,H))$, $\propg(a_n)\leq r$ and $\Phi(a_n)$ is not $\eps/2$-$n$-approximable for all $n\in\N$. Since the $E_n$'s are disjoint, for all $\bar\lambda\in\D^\N$, $a_{\bar\lambda}$ is a well defined element of $\cst(X)$. By Lemma \ref{Lemma4.9}, there is $s>0$ so that $\Phi(a_n)$ is $\eps/2$-$s$-approximable for all $n\in\N$, a contradiction.
\end{proof}

 If we substitute instances of $\cst(X)$ and of $\cst(Y)$ with $\BD(X)$ and $\BD(Y)$ in the proof of Lemma~\ref{LemmaCoarseLikeRA}, we obtain the following.
 \begin{lemma}\label{LemmaCoarseLikeRA2}
Let $X$ and $Y$ be u.l.f.\@ metric spaces and let $\Phi\colon\BD(X)\to \BD(Y)$ be a strongly continuous compact preserving linear map. If $\Phi\restriction \chi_F\BD(X)\chi_F$ is coarse-like for all finite $F\subset X$, then $\Phi$ is coarse-like.\qed
\end{lemma}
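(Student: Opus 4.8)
The plan is to follow the proof of Lemma~\ref{LemmaCoarseLikeRA} essentially verbatim, replacing $\cst(X),\cst(Y)$ by $\BD(X),\BD(Y)$ throughout and invoking the band-dominated forms of the auxiliary results (Proposition~\ref{PropSOTConvApproxBD} in place of its Roe-algebra counterpart, and Lemma~\ref{Lemma4.9BD} in place of Lemma~\ref{Lemma4.9}). So I would argue by contradiction: assuming $\Phi\restriction\chi_F\BD(X)\chi_F$ is coarse-like for every finite $F\subset X$ while $\Phi$ is not, fix $\e>0$ and $r>0$ witnessing the failure, i.e.\@ for every $s>0$ there is a contraction $a_s\in\BD(X)$ with $\propg(a_s)\le r$ and $\Phi(a_s)$ not $\e$-$s$-approximable.

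The internal claim is reproved unchanged: for every cofinite $F\subset X$ and every $s>0$ there is a finite-support contraction $a$ with $\propg(a)\le r$, supported off the finite set $X\setminus F$, such that $\Phi(a)$ is not $\e/2$-$s$-approximable. Setting $E=\{x\mid d(x,X\setminus F)\le r\}$, which is finite because $X$ is u.l.f.\@ and $X\setminus F$ is finite, one decomposes $a_{s'}=b+\chi_F a_{s'}\chi_F$ with $b\in\chi_E\BD(X)\chi_E$; coarse-likeness of $\Phi$ on the finite corner $\chi_E\BD(X)\chi_E$ handles $\Phi(b)$, and Proposition~\ref{PropSOTConvApproxBD} then yields a finite-support cut of $\chi_F a_{s'}\chi_F$ that is still not $\e/2$-$s$-approximable.

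The one genuinely new point, and where I expect the only real care is needed, is that Lemma~\ref{Lemma4.9BD} requires the witnessing operators to be \emph{compact}. In the Roe-algebra argument a finite-support element of $\cst(X)$ is automatically compact, but a finite-support element of $\BD(X)$ need not be. To repair this I would compress further: with $(p_j)_j$ finite-rank projections converging strongly to $1_H$, each $(1_X\otimes p_j)a(1_X\otimes p_j)$ is a compact contraction of propagation $\le r$ converging strongly to $a$; since $\Phi$ is strongly continuous and the $\e/2$-$s$-approximable elements form a strongly closed set (Proposition~\ref{PropSOTConvApproxBD}), some compression is still not $\e/2$-$s$-approximable, and I replace each witness by this compact one.

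Finally I would iterate the claim along the cofinite complements of a finite exhaustion of $X$ to extract compact contractions $(a_n)_n$ with pairwise disjoint finite supports, propagation $\le r$, and $\Phi(a_n)$ not $\e/2$-$n$-approximable. Disjointness of the supports together with the common propagation bound gives $a_{\bar\lambda}=\SOTh\sum_n\lambda_n a_n\in\BD(X)$ for every $\bar\lambda\in\D^\N$, so Lemma~\ref{Lemma4.9BD} furnishes a single $s$ for which every $\Phi(a_n)$ is $\e/2$-$s$-approximable, contradicting the construction. Apart from the compactness bookkeeping just described, the entire argument is a transcription of Lemma~\ref{LemmaCoarseLikeRA}, its substance being carried by Lemma~\ref{Lemma4.9BD}.
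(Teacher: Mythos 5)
Your proposal is correct and follows exactly the route the paper itself takes: the paper's entire ``proof'' of Lemma~\ref{LemmaCoarseLikeRA2} is the one-line remark that one substitutes $\cst(X)$, $\cst(Y)$ by $\BD(X)$, $\BD(Y)$ in the proof of Lemma~\ref{LemmaCoarseLikeRA}, which is precisely your transcription. The one point where you go beyond the paper is the compactness repair, and it is a genuine and necessary addition rather than mere bookkeeping: in the Roe-algebra argument the finite-support witnesses $a_n\in \cB(\ell_2(E_n,H))\cap\cst(X)$ are automatically compact, so each $\Phi(a_n)$ is compact and Lemma~\ref{Lemma4.9} can be applied to the sequence $(\Phi(a_n))_n$ in $\cst(Y)$; after a blind substitution the witnesses lie in $\BD(X)$ and need not be compact (e.g.\ $\chi_{\{x\}}\otimes 1_H$ has finite support), so Lemma~\ref{Lemma4.9BD} cannot be invoked for $(\Phi(a_n))_n$ as literally written, since compact preservation of $\Phi$ gives nothing for non-compact witnesses. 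Your fix --- compressing each finite-support witness by $1_X\otimes p_j$ (which is then finite rank, hence compact, with the same propagation bound) and using strong continuity of $\Phi$ together with Proposition~\ref{PropSOTConvApproxBD} to preserve non-approximability of some compression --- is exactly what is needed, and afterwards the application of Lemma~\ref{Lemma4.9BD} to $(\Phi(a_n))_n$ in $\BD(Y)$ is legitimate. So your write-up is, if anything, more complete than the paper's own.
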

 
We are ready to prove our uniform approximability results.
 
 \begin{proof}[Proof of Theorem~\ref{ThmCoarseLikeBD}]
 Fix u.l.f metric spaces $X$ and $Y$. Recall that we need to show that any strongly continuous compact preserving $^*$-homomorphism $\Phi\colon\BD(X)\to \BD(Y)$ is coarse-like. By Lemma~\ref{LemmaCoarseLikeRA2}, it is enough to show that $\Phi\restriction\chi_F\BD(X)\chi_F$ is coarse-like for all such $\Phi$ and all finite $F\subseteq X$. As $\chi_{\{x\}}\BD(X)\chi_{\{y\}}\cong\cB(H)$ for all $x$ and $y$ in $X$, it is enough to show that any strongly continuous compact preserving $^*$-homomorphism $\Phi\colon \cB(H)\to \BD(Y)$ is coarse-like. Fix such $\Phi$. 
 
Let $(\xi_n)_n$ be an orthonormal basis for $H$. If $F\subseteq\N$, let $p_F$ be the projection onto $\overline{\spann}\{\xi_i\mid i\in F\}$. We write $p_n$ for $p_{\{1,\ldots,n\}}$. By Proposition \ref{PropSOTConvApproxBD}, it is enough to show that for all $\e>0$ there is $s>0$ so that for all $n\in\N$ and all contractions $a\in \cB(H)$, $\Phi(p_nap_n)$ is $\eps$-$s$-approximable in $\BD(Y)$. Suppose that this fails for $\eps\in (0,1)$. By compactness of the unit sphere of $p_n\cB(H)p_n$, we then have that 
\begin{itemize}
\item[$(\ast)$] For all finite $E\subset \N$ and all $s>0$ there is a contraction $a\in \cB(H)$ with finite support so that $p_Eap_E=0$ and $\Phi(a)$ is not $\eps/2$-$s$-approximable in $\BD(Y)$.
\end{itemize} 

\begin{claim}
For all cofinite $F\subset \N$ and all $s>0$ there is a contraction $a\in \cB(p_FH)$ with finite support so that $\Phi(a)$ is not $\eps/4$-$s$-approximable in $\BD(Y)$.
\end{claim}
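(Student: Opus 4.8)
The plan is to argue by contradiction, using the \emph{negation} of the Claim as the engine that approximates every block of a witness to the failure hypothesis. So suppose the Claim fails: there are a cofinite $F\subseteq\N$ and an $s_*>0$ such that
\begin{center}
(H)\qquad every finite-support contraction $a\in\cB(p_FH)$ has $\Phi(a)$ that is $\eps/4$-$s_*$-approximable in $\BD(Y)$.
\end{center}
Write $E=\N\setminus F$, which is finite. Since $p_FH$ is infinite-dimensional, I fix once and for all a partial isometry $U\in\cB(H)$ with $U^*U=p_E$ and $UU^*\le p_F$ (an isometric copy of $p_EH$ inside $p_FH$, e.g.\ $U=\sum_{i\in E}\theta_{\xi_{j_i},\xi_i}$ for distinct $j_i\in F$). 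As $U,U^*\in\cB(H)$, we have $\Phi(U),\Phi(U^*)\in\BD(Y)$, so by the definition of $\BD(Y)$ there is an $R>0$ and operators of propagation at most $R$ approximating them within an auxiliary $\delta>0$ to be chosen small.

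Two easy ingredients feed the argument. First, since $\cB(p_EH)\cong M_{|E|}(\ce)$ is finite-dimensional, its image $\Phi(\cB(p_EH))$ is a finite-dimensional, hence closed, subspace of $\BD(Y)$; its unit ball is norm-compact, so by total boundedness together with the fact that each element of $\BD(Y)$ is approximable in propagation, there is a single $s_0>0$ with $\Phi(c)$ being $\eps/8$-$s_0$-approximable for \emph{every} contraction $c\in\cB(p_EH)$. Second, the \emph{folding trick}: for any finite-support contraction $a\in\cB(H)$, the operators $Up_Eap_F$ and $p_Fap_EU^*$ lie in $\cB(p_FH)$ and are contractions (their ranges and sources sit in $p_FH$ because $UU^*\le p_F$), so (H) applies to them; moreover, since $U^*U=p_E$ and $\Phi$ is a $^*$-homomorphism, $\Phi(p_Eap_F)=\Phi(U^*)\Phi(Up_Eap_F)$ and $\Phi(p_Fap_E)=\Phi(p_Fap_EU^*)\Phi(U)$.

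Now I apply the failure hypothesis at scale $s^\dagger:=\max\{s_0,\,s_*+R\}$ to obtain a finite-support contraction $a=p_nap_n$ with $\Phi(a)$ not $\eps$-$s^\dagger$-approximable, and I decompose $a=p_Fap_F+p_Eap_F+p_Fap_E+p_Eap_E$. I estimate the four summands of $\Phi(a)$ separately: $\Phi(p_Fap_F)$ is $\eps/4$-$s_*$-approximable by (H); $\Phi(p_Eap_F)=\Phi(U^*)\Phi(Up_Eap_F)$ is $(\eps/4+\delta')$-$(s_*+R)$-approximable, by multiplying the $\eps/4$-$s_*$-approximant of $\Phi(Up_Eap_F)$ given by (H) with the propagation-$R$ approximant of $\Phi(U^*)$ (the extra error $\delta'$ comparable to $\delta$); symmetrically $\Phi(p_Fap_E)$ is $(\eps/4+\delta')$-$(s_*+R)$-approximable; and $\Phi(p_Eap_E)$ is $\eps/8$-$s_0$-approximable. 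All four approximants have propagation at most $s^\dagger$, so $\Phi(a)$ is $\big(\tfrac34\eps+\tfrac18\eps+2\delta'\big)$-$s^\dagger$-approximable, and choosing $\delta$ small enough that this quantity falls below $\eps$ contradicts the choice of $a$. Hence no such $F,s_*$ exist, proving the Claim.

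The main obstacle is exactly the cross terms $p_Eap_F$ and $p_Fap_E$: unlike the diagonal blocks, their $\Phi$-images are not confined to a fixed finite-dimensional subalgebra of $\BD(Y)$, and there is no reason for them to be uniformly approximable on their own. The resolving device is the fixed partial isometry $U$ folding the finite block $p_EH$ into $p_FH$, which converts each cross term into an element of $\cB(p_FH)$—where (H) is available—at the single cost of one multiplication by the fixed operators $\Phi(U),\Phi(U^*)\in\BD(Y)$, enlarging propagation by only the constant $R$. I expect this folding step, rather than any of the routine estimates, to be the crux. Finally, the arithmetic $\tfrac34\eps+\tfrac18\eps<\eps$ is what forces contradicting the level-$\eps$ failure hypothesis directly rather than the sharper statement~$(\ast)$, whose budget $\eps/2$ is too small to absorb two cross terms.
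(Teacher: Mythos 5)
Your proof is correct, and it takes a genuinely different route from the paper's. The paper contradicts the intermediate statement $(\ast)$: assuming the Claim fails for some cofinite $F$ and $s>0$, it extracts from $(\ast)$ a sequence of contractions, bad at scale $n$, which (after using the negated Claim to discard the corner in $\cB(p_FH)$) may be taken supported in $F_n\times A_0$ with $(F_n)_n$ disjoint and $A_0=\N\setminus F$ fixed; since the ranges of such operators all pile up in the finite-dimensional space $p_{A_0}H$, their SOT-sums need not exist and Lemma~\ref{Lemma4.9BD} cannot be applied directly, so the paper spreads the ranges into disjoint blocks via a sequence of partial isometries $b_n\colon p_{A_0}H\to p_{E_n}H$, applies the Baire-category Lemma~\ref{Lemma4.9BD} to $(b_na_{m(n)})_n$, and then unfolds, the propagation cost $k(n)$ of approximating $\Phi(b_n^{-1})$ being absorbed by choosing $a_{m(n)}$ bad at scale $k(n)+n$. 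You instead contradict the level-$\eps$ failure hypothesis with a \emph{single} operator, via the four-corner decomposition relative to $p_E+p_F=1$: the corner $p_Fap_F$ is covered by the negated Claim, the corner $p_Eap_E$ by norm-compactness of the image of the finite-dimensional algebra $\cB(p_EH)$, and the two cross corners by folding into $\cB(p_FH)$ through one fixed partial isometry $U$ (folding in the opposite direction to the paper's), at the cost of one multiplication by the fixed elements $\Phi(U),\Phi(U^*)\in\BD(Y)$, approximable at a fixed propagation $R$. Your argument is shorter and more elementary: it avoids Lemma~\ref{Lemma4.9BD} and all sequence/subsequence extractions, and it never uses strong continuity or compact preservation of $\Phi$, only multiplicativity, the definition of $\BD(Y)$, and finite dimensionality of $\cB(p_EH)$; your closing budget analysis ($\tfrac{7}{8}\eps$ against the $\eps/2$ allowed by $(\ast)$) correctly identifies why the contradiction must target the $\eps$-level hypothesis rather than $(\ast)$. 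What the paper's route buys is mainly uniformity of method: it runs on exactly the same pattern as the step following the Claim and reuses machinery needed anyway. One small fix to your write-up: since $R$ depends on $\delta$ and $s^\dagger$ depends on $R$, you should fix $\delta$ explicitly in terms of $\eps$ alone (e.g.\ $\delta=\eps/32$, which gives $2\delta(1+\eps/4)<\eps/8$) \emph{before} invoking the failure hypothesis; deferring the choice of $\delta$ to the last line is harmless only because your constraint on $\delta$ involves nothing chosen after it, and this should be said.
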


\begin{proof}
Suppose the claim fails for a cofinite $F\subset\N$ and $s>0$. Let $A_0=\N\setminus F$. By $(\ast)$ there are an increasing sequence of finite subsets $(A_n)_n$ of $\N$ and a sequence of contractions $(a_n)_n$ in $\cB(H)$ so that $\supp(a_n)\subset A_n^2\setminus A_{n-1}^2$ and $\Phi(a_n)$ is not $\eps/2$-$n$-approximable in $\BD(Y)$ for all $n\in\N$. Since the claim fails, going to a subsequence and redefining $(a_n)_n$, we can assume that $\supp(a_n)\subset (A_n\setminus A_{n-1})\times A_0$ for all $n\in\N$ and that $\Phi(a_n)$ is not $\eps/8$-$n$-approximable in $\BD(Y)$ for all $n\in\N$ (otherwise, we could assume that $\supp(a_n)\subset A_0\times (A_n\setminus A_{n-1})$ and the proof would proceed similarly). Let $F_n=A_n\setminus A_{n-1}$. 

Let $(E_n)_n$ be a disjoint sequence of finite subsets of $\N$ so that $|E_n|=|A_0|$ for all $n\in\N$. For each $n\in\N$, let $b_n\in\cB(p_{A_0}H,p_{E_n}H)$ be a unitary. For each $n\in\N$, let $k(n)\in\N$ be so that $\Phi(b_n)$ can be $\eps/17$-$k(n)$-approximable in $\BD(Y)$ and let $m(n)\in\N$ be so that $\Phi(a_{m(n)})$ is not $\eps/8$-$(k(n)+n)$-approximable in $\BD(Y)$. Without loss of generality $(m(n))_n$ is strictly increasing.

Notice that $\supp(b_n a_{m(n)})\subset F_{m(n)}\times E_n$ for all $n\in\N$. As both $(E_n)_n$ and $(F_{m(n)})_n$ are disjoint sequences, we have that 
\[
\SOTh\sum_{n\in\N}\lambda_nb_na_{m(n)}\in \cB(H)
\]
for all $\lambda_n\in\D^\N$. As $\Phi$ is strongly continuous and compact preserving, each $\Phi(b_na_{m(n)})$ is compact and \[\SOTh\sum_{n\in\N}\lambda_n\Phi(b_na_{m(n)})\in \BD(Y)\]
for all $(\lambda_n)_n\in\D^\N$. By Lemma \ref{Lemma4.9BD}, there exists $s'>0$ so that $\Phi(b_na_{m(n)})$ is $\eps/16$-$s'$-approximable in $\BD(Y)$ for all $n\in\N$. Fix a sequence $(c_n)_n$ in $\BD(Y)$ so that $\propg(c_n)\leq s'$ and $\|\Phi(b_na_{m(n)})-c_n\|\leq \eps/16$ for all $n\in\N$.

As $b_n$ is unitary, each $\Phi(b_n^{-1})$ is $\eps/17$-$k(n)$-approximable in $\BD(Y)$. Fix $(d_n)_n$ in $\BD(Y)$ so that $\propg(d_n)\leq k(n)$ and $\|\Phi(b_n^{-1})-d_n\|\leq \eps/17$ for all $n\in\N$. Then, for all $n\in\N$,
\begin{align*}
\|\Phi(a_{m(n)})- & d_nc_n\|=&\\=&\norm{\Phi(b_n^{-1})\Phi(b_na_{m(n)})-d_nc_n}\\
\leq& \|\Phi(b_n^{-1})\Phi(b_na_{m(n)})-\Phi(b_n)^{-1} c_n\|+\|\Phi(b_n)^{-1} c_n-d_nc_n\|\\
\leq& \|\Phi(b_na_{m(n)})-c_n\|+\|\Phi(b_n)^{-1}-d_n\|\cdot\|c_n\|\\
\leq& \frac{\eps}{16}+\frac{\eps}{17}\Big(1+\frac{\eps}{16}\Big)\leq \frac{\eps}{8}.
\end{align*} 
As $\propg(d_nc_n)\leq k(n)+s'$, then $\Phi(a_{m(n)})$ is $\eps/2$-$(k(n)+s')$-approximable in $\BD(Y)$ for all $n\in\N$. For $n>s'$, this gives a contradiction.
\end{proof}

By the previous claim we can pick mutually disjoint finite sets $E_n\subseteq \N$ and a sequence of contractions $(a_n)_n$ so that $a_n\in \cB(P_{E_N}H)$ and $\Phi(a_n)$ is not $\eps/4$-$n$-approximable for all $n\in\N$. Since the $E_n$'s are mutually disjoint, $a_{\bar\lambda}\in\cB(H)$ for all $\bar\lambda\in\D^\N$. By Lemma \ref{Lemma4.9BD}, there exists $s>0$ so that $\Phi(a_n)$ is $\eps/4$-$s$-approximable in $\BD(Y)$ for all $n\in\N$, a contradiction.
 \end{proof}

\begin{proof}[Proof of Theorem~\ref{ThmCoarseLike}]
 Fix u.l.f metric spaces $X$ and $Y$, and an isomorphism $\Phi\colon\cst(X)\to \cst(Y)$. By Lemma~\ref{LemmaCoarseLikeRA}, it is enough to show that $\Phi\restriction\chi_F\cst(X)\chi_F$ is coarse-like for all finite $F\subseteq X$. Therefore, finiteness of $F$ implies that we only need to show that $\Phi\restriction\chi_{\{x\}}\cst(X)\chi_{\{y\}}$ is coarse-like for all $x$ and $y$ in $X$. To simplify the notation, assume $x=y$. 

We prove the following stronger statement:
\begin{enumerate}
\item[($*$)] For every $\e>0$ there is a finite $F\subseteq Y$ such that $\norm{\chi_F\Phi(a)\chi_F-\Phi(a)}<4\e$ for all positive contractions $a\in \chi_{\{x\}}\cst(X)\chi_{\{x\}}$. 
\end{enumerate} Notice that, since $\chi_F\Phi(a)\chi_F$ has propagation at most $\diam(F)$, for all $a\in\cst(X)$, ($*$) implies the desired result. We proceed by contradiction, so assume the statement in ($*$) fails for $\eps>0$.

Let $(\xi_n)_n$ be an orthonormal base for $H$, and let $p_n$ be the projection onto $\spann\{\xi_i\mid i\leq n\}$. For each $n\in\N$, let $q_n=\chi_{\{x\}}\otimes p_n$. 
Given a finite $I\subset \N$, write $q_I= q_{\max I}-q_{\min I}$.

\begin{claim}
For every finite $F\subseteq Y$ and $n\in\N$ there is a positive contraction $a\in \chi_{\{x\}}\cst(X)\chi_{\{x\}}$ with $aq_n=q_na=0$ and 
\[
\norm{\chi_{Y\setminus F}\Phi(a)\chi_{Y\setminus F}}>\e^2.
\]
\end{claim}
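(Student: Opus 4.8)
The plan is to argue by contradiction against the standing assumption that \textup{($*$)} fails for $\e$. So I fix a finite $F\subseteq Y$ and $n\in\N$ and suppose the Claim fails for this pair, i.e.\ that $\norm{\chi_{Y\setminus F}\Phi(a)\chi_{Y\setminus F}}\le\e^2$ for \emph{every} positive contraction $a\in\chi_{\{x\}}\cst(X)\chi_{\{x\}}$ with $aq_n=q_na=0$. From this I will produce a single finite set $F'\subseteq Y$ witnessing \textup{($*$)} for $\e$, contradicting the hypothesis. The elementary tool I use repeatedly is that for a positive contraction $c$ and a projection $\chi$ one has $\norm{c-\chi c\chi}\le 2\norm{(1-\chi)c(1-\chi)}^{1/2}$: indeed $c-\chi c\chi=\chi c(1-\chi)+(1-\chi)c$, while $\norm{c(1-\chi)}=\norm{c^{1/2}c^{1/2}(1-\chi)}\le\norm{c^{1/2}(1-\chi)}=\norm{(1-\chi)c(1-\chi)}^{1/2}$ (as $\norm{c^{1/2}}\le1$), and $\norm{(1-\chi)c}=\norm{c(1-\chi)}$. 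This is exactly what converts a bound of the form $\e^2$ on a corner into the bound $4\e$ demanded by \textup{($*$)}.

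Given an arbitrary positive contraction $b\in\chi_{\{x\}}\cst(X)\chi_{\{x\}}$, I split it along $q_n$ as $b=a+h$, where $a=(\chi_{\{x\}}-q_n)b(\chi_{\{x\}}-q_n)$ and $h=q_nb+bq_n-q_nbq_n$. Since $\chi_{\{x\}}-q_n\in\BD(X)$ and $\cst(X)$ is an ideal of $\BD(X)$, the element $a$ lies in $\cst(X)$, is a positive contraction, and satisfies $aq_n=q_na=0$; hence the failed-Claim hypothesis gives $\norm{\chi_{Y\setminus F}\Phi(a)\chi_{Y\setminus F}}\le\e^2$, and the same bound holds with $F$ replaced by any finite $F'\supseteq F$ by compressing to the smaller subspace.

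The main obstacle is the head term $h$: as $b$ ranges over all positive contractions, $h$ ranges over a \emph{non-compact} bounded set, so no uniform control on the spatial tail of $\Phi(h)$ can come from the compactness of a single operator. The observation that resolves this is that $q_n$ is a \emph{finite-rank projection}, so, $\Phi$ being a compact-preserving $^*$-isomorphism, $P:=\Phi(q_n)$ is again a finite-rank projection in $\cst(Y)$, fixed once and for all. As $\Phi$ is multiplicative on $\cst(X)$, $\Phi(h)=\Phi(b)P+P\Phi(b)-P\Phi(b)P$, and every summand carries a factor $P$ on the left or the right. Because $P$ is compact, I can choose a finite $F'\supseteq F$, \emph{depending only on $P$ and hence only on $n$}, with $\norm{\chi_{Y\setminus F'}P}=\norm{P\chi_{Y\setminus F'}}<\e^2/6$; then, using $\norm{\Phi(b)}\le1$, each of the three summands of $\chi_{Y\setminus F'}\Phi(h)\chi_{Y\setminus F'}$ has norm at most $\e^2/6$, so $\norm{\chi_{Y\setminus F'}\Phi(h)\chi_{Y\setminus F'}}\le\e^2/2$ uniformly in $b$.

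Combining the two estimates, $\norm{\chi_{Y\setminus F'}\Phi(b)\chi_{Y\setminus F'}}\le\e^2+\e^2/2=\tfrac32\e^2$ for every positive contraction $b$, whence by the positivity inequality $\norm{\Phi(b)-\chi_{F'}\Phi(b)\chi_{F'}}\le 2(\tfrac32\e^2)^{1/2}<4\e$. Since this holds for all positive contractions $b$, the single finite set $F'$ witnesses \textup{($*$)} for $\e$, contradicting the standing assumption; hence the Claim holds. The only non-routine input is the reduction of the head term to a fixed finite-rank projection $P=\Phi(q_n)$, which decouples the otherwise uncontrolled head contribution from $b$; everything else is the two-line positivity estimate together with compactness of $P$.
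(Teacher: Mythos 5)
Your proof is correct and is essentially the paper's own argument with the contradiction run in the opposite direction: both split the arbitrary positive contraction $b$ along the finite-rank projection $q_n$, both control the resulting head terms using that $\Phi$ sends $q_n$ (respectively, the finite-dimensional corner $q_n\cst(X)q_n$) to a compact operator whose tail outside a suitably enlarged finite set is uniformly small, and both pass from the $\e^2$-bound on the compression $\chi_{Y\setminus F'}\Phi(b)\chi_{Y\setminus F'}$ to the $4\e$-bound in ($*$) via the \cstar-identity for the positive element $\Phi(b)$ (the paper compresses $b^2$ and uses $\norm{\chi_{Y\setminus G'}\Phi(b)}^2=\norm{\chi_{Y\setminus G'}\Phi(b^2)\chi_{Y\setminus G'}}$; you compress $b$ and run the same identity through $\Phi(b)^{1/2}$). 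One minor point where your write-up is tighter: two of the paper's three head terms, $q_nb^2(\chi_{\{x\}}-q_n)$ and its adjoint, do not lie in $q_n\cst(X)q_n$, so the paper's choice of $G$ controls them only after factoring through the compact projection $\Phi(q_n)$ --- exactly the step you make explicit with $P=\Phi(q_n)$.
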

\begin{proof}
Fix a finite $F\subset Y$ and $n\in\N$. Since $q_n \cst(X) q_n$ is finite dimensional and each element of $q_n\cst(X)q_n$ has finite rank, there is a finite $G\subseteq Y$ such that whenever $a\in q_n\cst(X)q_n$ is a contraction and $G'\supset G$, then 
\[\norm{\Phi(a)-\chi_{G'}\Phi(a)}<\e^2.\]
 Fix $G'=G\cup F$. By our choice of $\eps$, there is a positive contraction $b\in \chi_{\{x\}}\cst(X)\chi_{\{x\}}$ such that $\norm{\chi_{G'}\Phi(b)\chi_{G'}-\Phi(b)}>4\e$. In particular, the triangular inequality implies that \[\norm{\chi_{Y\setminus G'}\Phi(b)}=\norm{\chi_{G'}\Phi(b)-\Phi(b)}>2\e.\] 

Let $a=(\chi_{\{x\}}-q_n)b^2(\chi_{\{x\}}-q_n)$. So $a$ is a positive contraction. Assume for a contradiction that $\norm{\chi_{Y\setminus F}\Phi(a)\chi_{Y\setminus F}}\leq \e^2$. Then $\norm{\chi_{Y\setminus G'}\Phi(a)\chi_{Y\setminus G'}}\leq \e^2$. Since 
\[
b^2=a+q_nb^2q_n+q_nb^2(\chi_{\{x\}}-q_n)+(\chi_{\{x\}}-q_n)b^2q_n,
\]
 we have that $\norm{\chi_{Y\setminus G'}\Phi(b^2)\chi_{Y\setminus G'}}\leq 4\e^2$, so $\norm{\chi_{Y\setminus G'}\Phi(b)}\leq 2\e$. This is a contradiction.
\end{proof}

Notice that $a=\SOTh\lim_m(q_m-q_n)a(q_m-q_n)$, for all $a\in \chi_{\{x\}}\cst(X)\chi_{\{x\}}$ with $q_na=aq_n=0$. Therefore, the previous claim can be used to produce a sequence $(a_n)_n$ of contractions in $\chi_{\{x\}}\cst(X)\chi_{\{x\}}$, a sequence of natural numbers $(k(n))_n$, and sequences $(F_n)_n$ and $(I_n)_n$ of finite subsets of $Y$ and $\N$, respectively, so that $(F_n)_n$ is a disjoint sequence, $\max I_n<\max I_ {n+1}$ for all $n\in\N$, and 
\[
\norm{(\chi_{F_n}\otimes p_{k(n)})\Phi(q_{I_n}a_na^*_nq_{I_n})(\chi_{F_n}\otimes p_{k(n)})}>\e^2/4
\]
for all $n\in\N$. Hence, the \cstar-equality gives that 
\[
\norm{\chi_{F_n}\otimes p_{k(n)}\Phi(q_{I_n}a_n)}>\e/2
\]
for all $n\in\N$. 

As both $(\chi_{F_n}\otimes p_{k(n)})_n$ and $(\Phi(q_{I_n}a_n))_n$ are sequences of compact operators converging to zero in the strong operator topology, passing to a subsequence if necessary, we can assume that
\[
\norm{\chi_{F_n}\otimes p_{k(n)}\Phi(q_{I_m}a_m)}\leq 2^{-n-3}\eps
\]
for all $n\neq m$ in $\N$. As $(F_n)_n$ is a disjoint sequence, $b=\SOTh\sum_n\chi_{F_n}\otimes p_{k(n)}$ exists and it clearly belongs to $\cst(Y)$. Let $c=\Phi^{-1}(b)$; in particular $c\in \cst(X)$. Then 
\begin{align*}
\|cq_{I_m}\|&\geq \|cq_{I_m}a_m\|=\|b\Phi(q_{I_m}a_m)\|\\
&\geq \norm{\chi_{F_n}\otimes p_{k(n)}\Phi(q_{I_m}a_m)}-\sum_{n\neq m}\norm{\chi_{F_n}\otimes p_{k(n)}\Phi(q_{I_m}a_m)} \\
&\geq \eps/4
\end{align*}
for all $m\in\N$. As $(I_n)_n$ are disjoint, this contradicts the fact that $c\in \cst(X)$, i.e., that $c$ is locally compact. 
\end{proof}
 \begin{remark}
 We used positivity and the fact that the $q_n$'s are projection in the above proof. We would not need it, by playing with functional analysis. So if we ever want, we could follow the strategy highlighted above to show that if $\Phi\colon\cst(X)\to\cst(Y)$ is a strongly continuous linear compact preserving map with the property that for every sequence of operators $(a_n)$ we have that if $\Phi(a_n)$ converges strongly to $b$ in $\cst(Y)$ then there is $c\in\cst(X)$ such that $a_n$ converges strongly to $c$. This shows that having a strongly continuous sequence which is converging outside $\cst(X)$ and which is sent to a strongly converging sequence converging inside $\cst(Y)$ is indeed the only obstruction to generalising Theorem~\ref{ThmCoarseLikeURA} to the Roe algebra setting (see, e.g., the discussion after Proposition~\ref{PropNotCoarseLike}).
 \end{remark}

We finish the section introducing a weaker version of coarse-likeness for which Theorem \ref{ThmCoarseLikeURA} has an equivalent for Roe algebras.
 
\begin{definition}\label{DefiAsympCoarseLikeDEF}
Let $X$ and $Y$ be metric spaces. A map $\Phi\colon\cst(X)\to \cst(Y)$ is \emph{asymptotically coarse-like} if for all $\eps>0$ and $r>0$ there are $s>0$ and a cofinite $X'\subset X$ so that $\Phi(a)$ is $\eps$-$s$-approximable in $\cst(Y)$ for all contractions in $a\in \cst(X')$ with $\propg(a)\leq r$. 
\end{definition}

 \begin{theorem}\label{ThmAsympCoarseLike}
Let $X$ and $Y$ be u.l.f.\@ metric spaces. Every strongly continuous compact preserving linear map $\Phi\colon\cst(X)\to \cst(Y)$ is asymptotically coarse-like. 
\end{theorem}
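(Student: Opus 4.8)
The plan is to argue by contradiction, following the template of the proof of Lemma~\ref{LemmaCoarseLikeRA}, but replacing its use of coarse-likeness on finite corners (which may genuinely fail, by Proposition~\ref{PropNotCoarseLike}) with the freedom, built into Definition~\ref{DefiAsympCoarseLikeDEF}, to discard a finite portion of $X$. So suppose asymptotic coarse-likeness fails for some $\eps>0$ and $r>0$. Negating the definition, for every $s>0$ and every cofinite $X'\subseteq X$ there is a contraction $a\in\cst(X')$ with $\propg(a)\leq r$ such that $\Phi(a)$ is not $\eps$-$s$-approximable in $\cst(Y)$.

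The heart of the argument is to manufacture from this a disjointly supported sequence of bad finite pieces. I would build inductively disjoint finite sets $E_n\subseteq X$ and contractions $a_n\in\cst(X)$ supported on $E_n\times E_n$ with $\propg(a_n)\leq r$ and $\Phi(a_n)$ not $\eps/2$-$n$-approximable. At stage $n+1$, having set $D_n=E_1\cup\dots\cup E_n$, I apply the failure above to $s=n+1$ and the cofinite set $X'=X\setminus D_n$ to obtain a bad contraction $a\in\cst(X\setminus D_n)$. Since $a=\SOTh\lim_m\chi_{F_m}a\chi_{F_m}$ along an increasing sequence of finite sets $F_m\subseteq X\setminus D_n$ with union $X\setminus D_n$, and $\Phi$ is strongly continuous, Proposition~\ref{PropSOTConvApprox} forces some finite $E_{n+1}:=F_m\subseteq X\setminus D_n$ with $\Phi(\chi_{E_{n+1}}a\chi_{E_{n+1}})$ not $\eps/2$-$(n+1)$-approximable; otherwise $\Phi(a)$ would be $(\eps/2+\delta)$-$(n+1)$-approximable for all $\delta>0$, hence $\eps$-$(n+1)$-approximable, a contradiction. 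I then set $a_{n+1}=\chi_{E_{n+1}}a\chi_{E_{n+1}}$, a contraction of propagation $\leq r$ supported on $E_{n+1}\times E_{n+1}$; as $E_{n+1}\subseteq X\setminus D_n$, the sets $E_n$ remain pairwise disjoint. This is exactly where the cofinite clause does the work that the finite-corner hypothesis did in Lemma~\ref{LemmaCoarseLikeRA}: disjointness of the supports can be arranged only because each new bad element may be sought away from the finitely many points used so far.

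Finally I would feed the images into the Baire-category lemma. Because the $E_n$ are disjoint and finite, each $a_n$ is a finite matrix of compact operators, hence compact, and $a_{\bar\lambda}=\SOTh\sum_n\lambda_n a_n$ exists in $\cst(X)$ for every $\bar\lambda\in\D^{\N}$. Setting $b_n=\Phi(a_n)$, compact preservation makes each $b_n$ compact, and strong continuity gives $b_{\bar\lambda}=\SOTh\sum_n\lambda_n\Phi(a_n)=\Phi(a_{\bar\lambda})\in\cst(Y)$ for all $\bar\lambda$. Thus $(b_n)_n$ satisfies the hypotheses of Lemma~\ref{Lemma4.9}, which yields a single $s>0$ such that $\Phi(a_n)=b_n$ is $\eps/2$-$s$-approximable for every $n$. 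For any $n\geq s$ this contradicts the construction, since an $\eps/2$-$s$-approximation is a fortiori an $\eps/2$-$n$-approximation. I expect the main obstacle to be the extraction of a bad finitely supported piece from a bad element of possibly infinite support: this is what licenses the passage to disjoint supports required by Lemma~\ref{Lemma4.9}, and it rests on the interaction between strong convergence and $\eps$-$r$-approximability recorded in Propositions~\ref{PropSOTConvApproxBD} and~\ref{PropSOTConvApprox}.
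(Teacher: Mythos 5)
Your proposal is correct and follows essentially the same route as the paper's proof: negate the definition, use Proposition~\ref{PropSOTConvApprox} together with strong continuity and the cofinite clause to extract a disjointly supported sequence of bad finite contractions of propagation at most $r$, then invoke compact preservation and Lemma~\ref{Lemma4.9} to obtain a uniform $s$ and derive a contradiction. The only difference is presentational: you spell out the induction and the $\eps/2$ loss that the paper compresses into a single sentence.
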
 

 \begin{proof} 
Suppose this fails. So there is $\eps>0$ and $r>0$ so that for all $n\in\N$ and all cofinite $X'\subset X$, there is a contraction $a\in \cst(X')$ with $\propg(a)\leq r$ so that $\Phi(a)$ is not $\eps$-$n$-approximable. Then, by Proposition \ref{PropSOTConvApprox}, we can pick a disjoint sequence $(X_n)_n$ of finite subsets of $X$ and a sequence $(a_n)_n$ of contractions so that $a_n\in \cst(X_n)$, $\propg(a_n)\leq r$, and $\Phi(a_n)$ is not $\eps$-$n$-approximable for all $n\in\N$. 

Since each $X_n$ is finite and $\Phi$ preserves the compacts, each $\Phi(a_n)$ is compact. Hence, as $(X_n)_n$ is a disjoint sequence and as each $a_n$ has propagation at most $r$, strong continuity of $\Phi$ gives that $\SOTh\sum_n\lambda_n\Phi(a_n)\in \cstu(Y)$ for all $(\lambda_n)_n\in \D^\N$. Therefore, Lemma \ref{Lemma4.9} implies that there is $s>0$ to that each $\Phi(a_n)$ is $\delta$-$s$-approximable; contradiction.
\end{proof}
 
\begin{remark}
Although we will prove Theorems \ref{thm:main} and \ref{thm:WW} below using Theorem \ref{ThmCoarseLike}, we point out that both those results could be obtained (in a very similar way) using Theorem \ref{ThmAsympCoarseLike} above instead of Theorem \ref{ThmCoarseLike}. 
\end{remark}

\section{The multiplier algebra of $\cst(X)$}\label{SecMult}

In this short section, we characterize $\BD(X)$ as the multiplier algebra of $\cst(X)$. As a consequence, this shows that $\Inn(\BD(X))$ is a normal subgroup of $\Aut(\cst(X))$. Since all automorphisms of $\cst(X)$ are strongly continuous, being induced by a unitary in $\cB(\ell_2(X,H))$ (see e.g., \cite[Lemma 3.1]{SpakulaWillett2013AdvMath}), we have that all automorphisms of $\cst(X)$ extend to automorphisms of $\BD(X)$. An operator algebraist used to work with multipliers would not be surprised by this result, and may even find it obvious. However, we do not know of a proof that does not use uniform approximability in some way.
 
\begin{theorem}\label{prop:mult}
Let $X$ be a u.l.f.\@ metric space. Then $\BD(X)=\mathcal M(\cst(X))$.
\end{theorem}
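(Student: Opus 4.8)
The inclusion $\BD(X)\subseteq\mathcal M(\cst(X))$ is already recorded in the text, so the plan is to establish only the reverse containment. Since the approximate identity $\{q_f\}$ of Proposition~\ref{prop:approxid} converges strongly to the identity of $\cB(\ell_2(X,H))$, the algebra $\cst(X)$ acts nondegenerately, and I would invoke the standard identification of the multiplier algebra with the idealiser inside $\cB(\ell_2(X,H))$:
\[
\mathcal M(\cst(X))=\{T\in\cB(\ell_2(X,H))\mid T\cst(X)\subseteq\cst(X)\ \text{and}\ \cst(X)T\subseteq\cst(X)\}.
\]
Fixing such a $T$, the goal becomes to exhibit, for every $\e>0$, a finite-propagation operator within $2\e$ of $T$, which places $T$ in $\BD(X)$.

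The engine is uniform approximability. Left and right multiplication $L_T(a)=Ta$ and $R_T(a)=aT$ are linear, strongly continuous, and compact preserving maps $\cst(X)\to\cst(X)$ (the containments $T\cst(X)\subseteq\cst(X)$ and $\cst(X)T\subseteq\cst(X)$ are exactly what make them land in $\cst(X)$), so Theorem~\ref{ThmAsympCoarseLike} applies to each. Applying it to $L_T$ with $r=0$ produces $s>0$ and a cofinite $X'\subseteq X$, with $F=X\setminus X'$ finite, such that $Ta$ is $\e$-$s$-approximable for every propagation-$0$ contraction $a\in\cst(X')$. The diagonal projections $q^{X'}_f=\sum_{x\in X'}\chi_{\{x\}}\otimes p_{f(x)}$ are such contractions and converge strongly to $\chi_{X'}$, so Proposition~\ref{PropSOTConvApproxBD} upgrades this to the assertion that $T\chi_{X'}$ is $\e$-$s$-approximable in $\BD(X)$. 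Symmetrically, applying Theorem~\ref{ThmAsympCoarseLike} to $R_T$ gives $s'>0$ and a cofinite $X''\subseteq X$, with $E'=X\setminus X''$ finite, for which $\chi_{X''}T$ is $\e$-$s'$-approximable in $\BD(X)$.

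I would then finish via the decomposition
\[
T=T\chi_{X'}+\chi_{X'\cap X''}\,T\chi_F+\chi_{X'\cap E'}\,T\chi_F+\chi_F T\chi_F,
\]
treating each summand. The first term is $\e$-$s$-approximable by the previous paragraph. For the second, writing $\chi_{X'\cap X''}T\chi_F=\chi_{X'}(\chi_{X''}T)\chi_F$ and using that compressing by the propagation-$0$ projections $\chi_{X'},\chi_F$ does not increase propagation, it is $\e$-$s'$-approximable. The last two terms are supported on products of two finite sets, hence have finite propagation and already lie in $\BD(X)$. Summing the four approximants produces a single finite-propagation operator within $2\e$ of $T$; letting $\e\to0$ yields $T\in\BD(X)$. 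The main obstacle is precisely that $L_T$ need not be coarse-like in the strong sense of Theorem~\ref{ThmCoarseLike} (cf.\ Proposition~\ref{PropNotCoarseLike}), only \emph{asymptotically} coarse-like; the device that circumvents this is to peel off the finite complementary blocks, observing that any block supported on a product of two finite sets automatically has finite propagation, while the single remaining cofinite-by-finite block is absorbed by the row-side estimate furnished by $R_T$.
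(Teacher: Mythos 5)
Your proof is correct, but it follows a genuinely different route from the paper's. You delegate all the hard analysis to Theorem~\ref{ThmAsympCoarseLike}: the observation that $L_T(a)=Ta$ and $R_T(a)=aT$ are strongly continuous, compact preserving linear maps $\cst(X)\to\cst(X)$ precisely because $T$ lies in the idealizer is exactly right, and the upgrade from approximability of $Tq^{X'}_{f_n}$ (resp.\ $q^{X''}_{f_n}T$) to approximability of $T\chi_{X'}$ (resp.\ $\chi_{X''}T$) via Proposition~\ref{PropSOTConvApproxBD} is legitimate, since right multiplication by $T$ is also SOT-continuous ($a_n T\xi\to aT\xi$ for the fixed vector $T\xi$) and that proposition allows the SOT-limit to be an arbitrary element of $\cB(\ell_2(X,H))$. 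Your four-term decomposition is exact, the two blocks supported on products of finite sets trivially have finite propagation, and compressions by propagation-zero projections do not increase propagation, so the $2\eps$-estimate and norm-closedness of $\BD(X)$ finish the job. (Cosmetic point: Definition~\ref{DefiAsympCoarseLikeDEF} quantifies over $r>0$, so invoke it with, say, $r=1$ rather than $r=0$.) The paper argues quite differently: it works by contradiction inside \S\ref{SecMult}, first proving the quantitative Claim~\ref{claim:multipliers} that rows and columns $\chi_Fb$, $b\chi_F$ of a multiplier localize near $F$, then, assuming $b$ is not $\e$-$n$-approximable for any $n$, inductively constructing disjoint finite sets $F_n$ and finite-rank projections $p_n$ with decaying off-diagonal interaction, and finally using that $cbc\in\cst(X)$ for the propagation-zero projection $c=\SOTh\sum_n\chi_{F_n}\otimes p_n$ to contradict the non-approximability of the diagonal blocks. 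The trade-off: the paper's argument is longer but elementary and self-contained, relying only on Propositions~\ref{prop:approxid}--\ref{PropSOTConvApproxBD} and producing the independently useful localization Claim~\ref{claim:multipliers} along the way; yours is shorter, direct rather than by contradiction, and more modular, but it imports the full Baire-category machinery of \S\ref{SecUnifApprox} (Lemma~\ref{Lemma4.9BD} via Theorem~\ref{ThmAsympCoarseLike}) --- consistent, incidentally, with the authors' remark that they know of no proof avoiding uniform approximability in some form.
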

\begin{proof}
We use the characterisation of the multiplier algebra given in \cite[II.7.3.5]{Blackadar.OA}. As $\cst(X)$ is already represented faithfully on $\ell_2(X,H)$, the multiplier algebra of $\cst(X)$ coincides with its idealizer, that is, 
\[
\mathcal M(\cst(X))=\Big\{b\in\cB(\ell_2(X,H))\mid \forall a\in\cst(X),\ ba, ab\in\cst(X)\Big\}.
\]
As $\cst(X)$ is an ideal in $\BD(X)$, we clearly have that $\BD(X)\subseteq\mathcal M(\cst(X))$.
\begin{claim}\label{claim:multipliers}
Let $b\in\mathcal M(\cst(X))$, $\eps>0$, and let $F\subseteq X$ be finite. Then there is a finite $G\subset X$ such that $\norm{\chi_Gb\chi_F-b\chi_F}<\e$ and $\norm{\chi_Fb\chi_G-\chi_Fb}<\e$. In particular, $b\chi_F $ and $\chi_Fb$ belong to $ \BD(X)$.
\end{claim}
\begin{proof}
Suppose not. Then, without loss of generality, we assume that there is a sequence $(G_n)_n$ of disjoint finite subsets $X$ so that $\norm{\chi_{G_n}b\chi_F}>\e/2$ for all $n\in\NN$. For each $n\in\N$, fix a unit vector $\xi_n$ such that $\norm{\chi_{G_n}b\chi_F\xi_n }>\e/2$ and let $p_n$ be a finite rank projection in $\cB(H)$ so that $\|(\chi_{G_n}\otimes p_n)b\chi_F\xi_n\|>\eps/2 $. Then $\SOTh\sum_n(\chi_{G_n}\otimes p_n)\in\cst(X)$, so, as $b\chi_F\in\mathcal M(\cst(X))$, then $\SOTh\sum_n(\chi_{G_n}\otimes p_n)b\chi_F\in\cst(X)$. Fix $k\in \N$ such that $\SOTh\sum_n(\chi_{G_n}\otimes p_n)b\chi_F$ can be $\e/2$-$k$-approximated in $\cst(X)$ and $m$ large enough so that $d(G_m,F)>k$. Since $\chi_{G_m}\otimes p_m$ and $\chi_F$ have propagation $0$, $(\chi_{G_{m}}\otimes p_m)b\chi_F$ can be $\e/2$-$k$-approximated in $\cst(X)$. Let $c\in \cst(X)$ be an element with propagation at most $k$ so that $\norm{c-(\chi_{G_{m}}\otimes p_m)b\chi_F}<\e/2$. Since $c$ has propagation at most $k$ and $d(G_{m},F)>k$, then $(\chi_{G_{m}}\otimes p_m)c\chi_F=0$. Hence $\norm{(\chi_{G_{m}}\otimes p_m)b\chi_F}<	\e/2$, a contradiction. 
\end{proof}

In order to get a contradiction, suppose $b\in\mathcal M(\cst(X))$ is such that there is $\e>0$ for which $b$ cannot be $\e$-$n$-approximated for every $n\in\N$. We will construct two sequences $(F_n)_n$ and $(p_n)_n$ of finite subsets of $X$ and finite-rank projections in $\mathcal B(H)$, respectively, with the following properties:
\begin{itemize}
\item the $F_n$ are disjoint,
\item for all $n$, 
\[
\norm{(\chi_{\bigcup_{m\neq n}F_m}\otimes 1_H)b(\chi_{F_n}\otimes p_n)}<2^{-n},
\]
\item each $(\chi_{F_n}\otimes p_n)b(\chi_{F_n}\otimes p_n)$ cannot be $\e/2$-$n$-approximated.
\end{itemize}
We do this by induction. Let $(q_n)_n$ be a sequence of finite-rank projections in $\cB(H)$ which is converging strongly to $1_H$. Since $b$ cannot be $\e$-$0$-approximated and 
\[
b=\SOTh\lim_{\substack{F\subseteq X\text{ finite }\\n\in\NN}}(\chi_F\otimes q_n)b(\chi_F\otimes q_n),
\]
 Proposition~\ref{PropSOTConvApproxBD} gives a finite $F_0\subseteq X$ and $n\in\NN$ such that $(\chi_F\otimes q_n)b(\chi_F\otimes q_n)$ cannot be $\e$-$0$-approximated. Let $p_0=q_n$. 
 
 We now make the inductive step: suppose that $p_0,\ldots,p_n$ and $F_0,\ldots,F_n$ have been defined. Let $b_n=b\chi_{\bigcup_{m\leq n}F_n}$. Using the previous claim, pick a finite $G\subseteq X$ such that $\norm{\chi_Gb_n-b_n}<2^{-n-1}$. By Claim \ref{claim:multipliers}, $\chi_{X\setminus G}b\chi_G+\chi_Gb\chi_{X\setminus G}+\chi_Gb\chi_G\in\BD(X)$, so there is $n'>n$ such that $\chi_{X\setminus G}b\chi_G+\chi_Gb\chi_{X\setminus G}+\chi_Gb\chi_G$ can be $\e/2$-$n'$-approximated. As 
\[
b=\chi_{X\setminus G}b\chi_{X\setminus G}+\chi_{X\setminus G}b\chi_G+\chi_Gb\chi_{X\setminus G}+\chi_Gb\chi_G
\] 
 cannot be $\e$-$n'$-approximated, $\chi_{X\setminus G}b\chi_{X\setminus G}$ cannot be $\e/2$-$n'$-approximated. As
\[
\chi_{X\setminus G}=\SOTh\lim_{\substack{F\subseteq X\setminus G \text{ finite}\\ i\in\NN}}\chi_{F}\otimes q_i,
\]
 Proposition~\ref{PropSOTConvApproxBD} gives a finite $F_{n+1}$ and $i\in\NN$ such that $(\chi_{F_{n+1}}\otimes q_i)b(\chi_{F_{n+1}}\otimes q_i)$ cannot be $\e/2$-$n'$-approximated. Setting $p_{n+1}=q_i$ concludes the construction.

Let now $c=\SOTh\sum_n(\chi_{F_n}\otimes p_n)$ and notice that
\[
cbc=\SOTh\sum_n(\chi_{F_n}\otimes p_n)b(\chi_{F_n}\otimes p_n)+d
\]
where 
\[
d=\SOTh\sum_{ n\in \N } \Big((\chi_{F_n}\otimes p_n)b\sum_{m\neq n} \chi_{F_m}\otimes p_m\Big).
\]
By our choice of $(F_n)_n$ and $(p_n)_n$, we have that 
\begin{eqnarray*}
\norm{\Big(1_X\otimes 1_H-\sum_{m\leq n}\chi_{F_m}\otimes p_m\Big)d}&\leq& \sum_{m>n}\norm{(\chi_{F_m}\otimes p_m)b\sum_{n'\neq m}\chi_{F_{n'}}\otimes p_{n'}}\\&\leq&\sum_{m>n}2^{-m}\leq 2^{-n+1}
\end{eqnarray*}
for all $n\in\N$. Hence $d$ is compact, so $d\in \cst(X)$. Let 
\[
b'=\SOTh\sum_n(\chi_{F_n}\otimes p_n)b(\chi_{F_n}\otimes p_n).
\]
As $b\in\mathcal M(\cst(X))$, it follows that $cbc\in\cst(X)$. Hence, as $d\in\cst(X)$, we have that $b'\in\cst(X)$. Pick $n$ such that $b'$ can be $\e/2$-$n$-approximated. Since $\chi_{F_n}\otimes p_n$ has propagation $0$, $(\chi_{F_n}\otimes p_n)b'(\chi_{F_n}\otimes p_n)$ can be $\e/2$-$n$-approximated. This is a contradiction since $(\chi_{F_n}\otimes p_n)b'(\chi_{F_n}\otimes p_n)=(\chi_{F_n}\otimes p_n)b(\chi_{F_n}\otimes p_n)$.
\end{proof} 

The following is a simple consequence of Theorem \ref{prop:mult}.

\begin{corollary}\label{CorNormalSubgroup}
Let $X$ be a u.l.f.\@ metric space. Any $\Phi\in\Aut( \cst(X))$ extends to an automorphism of $\BD(X)$. Moreover, $\Inn(\BD(X))$ is a normal subgroup of $\Aut(\cst(X))$.\qed
\end{corollary}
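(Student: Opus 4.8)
The plan is to deduce everything from the identification $\BD(X)=\mathcal{M}(\cst(X))$ of Theorem~\ref{prop:mult} together with the fact, recalled above, that every $\Phi\in\Aut(\cst(X))$ is of the form $\Ad(w)\rs\cst(X)$ for some unitary $w\in\cB(\ell_2(X,H))$ (via \cite[Lemma 3.1]{SpakulaWillett2013AdvMath}). First I would produce the extension using the idealizer description of $\mathcal{M}(\cst(X))$ from the proof of Theorem~\ref{prop:mult}: I would check that $\Ad(w)$ carries the idealizer of $\cst(X)$ onto itself. Indeed, if $b\in\BD(X)$ and $a\in\cst(X)$, then $w^*aw\in\cst(X)$ (as $\Ad(w^*)\rs\cst(X)=\Phi^{-1}$ preserves the ideal), so $wbw^*a=\Ad(w)(b\,w^*aw)\in\cst(X)$, and symmetrically on the other side; hence $\Ad(w)(b)\in\mathcal{M}(\cst(X))=\BD(X)$. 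Thus $\bar\Phi:=\Ad(w)\rs\BD(X)$ is an automorphism of $\BD(X)$ extending $\Phi$. (Alternatively one may invoke the general principle that an automorphism of a \cstar-algebra extends uniquely to its multiplier algebra; essentiality of the ideal $\cst(X)$ guarantees uniqueness, so that $\bar\Phi$ is canonical and $\Phi\mapsto\bar\Phi$ is a group homomorphism.) This proves the first assertion.

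Next I would address normality. Regard $\Inn(\BD(X))$ as a subgroup of $\Aut(\cst(X))$ by restricting each $\Ad(u)$, with $u\in\BD(X)$ unitary, to the ideal $\cst(X)$ (this restriction is an automorphism of $\cst(X)$ precisely because $\cst(X)$ is an ideal of $\BD(X)$). Fix $\Phi\in\Aut(\cst(X))$ with extension $\bar\Phi\in\Aut(\BD(X))$ and a unitary $u\in\BD(X)$. The heart of the matter is the conjugation formula
\[
\Phi\circ\big(\Ad(u)\rs\cst(X)\big)\circ\Phi^{-1}=\Ad(\bar\Phi(u))\rs\cst(X).
\]
To verify it, take $a\in\cst(X)$; then $u\,\Phi^{-1}(a)\,u^*\in\cst(X)$ by ideality, so $\Phi$ and $\bar\Phi$ agree on this element, and multiplicativity of $\bar\Phi$ together with $\bar\Phi\rs\cst(X)=\Phi$ yields
\[
\Phi\big(u\,\Phi^{-1}(a)\,u^*\big)=\bar\Phi(u)\,\bar\Phi(\Phi^{-1}(a))\,\bar\Phi(u)^*=\bar\Phi(u)\,a\,\bar\Phi(u)^*.
\]
Since $\bar\Phi(u)$ is a unitary of $\BD(X)$, the right-hand side is the restriction of an element of $\Inn(\BD(X))$, which establishes normality.

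The computations are routine once the extension is in hand, so the only point requiring real care---and the step I expect to be the main obstacle---is the \emph{coherence} of the extension: one must know not merely that some extension $\bar\Phi$ exists but that it is unique, so that $\Phi\mapsto\bar\Phi$ respects composition and inverses and the formula above is unambiguous. This is exactly where essentiality of $\cst(X)$ in $\BD(X)$ (equivalently, the faithfulness underlying the idealizer representation used in Theorem~\ref{prop:mult}) enters: two multipliers agreeing on an essential ideal coincide. With uniqueness secured, $\bar\Phi(u)$ is well defined independently of the implementing unitary $w$, and the argument closes.
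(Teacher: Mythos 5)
Your proposal is correct and takes essentially the same route as the paper: the corollary is stated there as an immediate consequence of Theorem~\ref{prop:mult} ($\BD(X)=\mathcal M(\cst(X))$) combined with the fact that every $\Phi\in\Aut(\cst(X))$ is unitarily implemented and hence extends (uniquely, by essentiality) to the multiplier algebra, which is exactly the argument you spell out. Your explicit verification of the idealizer stability and of the conjugation formula $\Phi\circ\Ad(u)\circ\Phi^{-1}=\Ad(\bar\Phi(u))$ simply fills in the routine details the paper leaves to the reader.
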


 \begin{remark}
If one is only interested in Corollary \ref{CorNormalSubgroup}, Theorem \ref{prop:mult} is not necessary. In fact, Theorem \ref{ThmCoarseLike} gives us an easy proof of Corollary \ref{CorNormalSubgroup}. We outline it here as an example of the power of Theorem \ref{ThmCoarseLike}.

Fix $\Phi\in \Aut(\cst(X))$ and let $u\in \cB(\ell_2(X,H))$ be a unitary so that $\Phi=\Ad(u)$ (e.g., \cite[Lemma 3.1]{SpakulaWillett2013AdvMath}). Fix $r>0$ and $\eps>0$ and let $a\in \BD(X)$ be a contraction with $\propg(a)\leq r$. Then, $a$ can be easily written as $a=\SOTh\lim_n a_n$ where $(a_n)_n$ is a sequence of contractions in $\cst(X)$ with $\propg(a_n)\leq r$ for all $n\in\N$. By Theorem \ref{ThmCoarseLike}, there is $s=s(r,\eps)>0$ and a sequence $(b_n)_n$ in $\cst(X)$ so that $\propg(b_n)\leq s$ and $\|\Phi(a_n)-b_n\|\leq \eps$ for all $n\in\N$. Using weak operator compactness and going to a subsequence, we can assume that $b=\WOTh\lim_nb_n$ exists. Clearly, $\propg(b)\leq r$ and $\|\Ad(u)(a)-b\|\leq \eps$. As $\eps$ was arbitrary, this shows that $\Ad(u)(a)\in \BD(X)$. We leave the remaining details to the reader. 
\end{remark}

\section{Proof of the main result}\label{SectionMakingIsoCoarseLike}

We use the uniform approximability results of \S\ref{SecUnifApprox} to prove Theorems~\ref{thm:main} and \ref{ThmCartan}.

\subsection{Technical lemmas}\label{SubsectionTechnical}
We prove several technical lemmas in this subsection. Their proofs are inspired by techniques in \cite[Section 6]{WhiteWillett2017}.

\begin{definition}
A u.l.f.\@ metric space $X$ has the \emph{operator norm localisation property} (\emph{ONL}) if for all $s>0$ and all $\rho\in (0,1)$ there is $r>0$ so that if $a\in \cB(\ell_2(X,H))$ has propagation at most $s$ then there exists a unit vector $\xi\in \ell_2(X,H)$ with $\diam(\supp(\xi))\leq r$ so that $\|a\xi\|\geq \rho\|a\|$.\footnote{Recall, $\supp(\xi)=\{x\in X\mid \xi(x)\neq 0\}$ for a given $\xi:X\to H$.} 
\end{definition}

By \cite[Theorem 4.1]{Sako2014}, a u.l.f.\@ metric space has property A if and only if it has ONL. The following assumption will be recurrent:

\begin{assumption}\label{AssumptionIsoONL} Let $X$ and $Y$ be u.l.f.\@ metric spaces with ONL, $H$ be a separable infinite-dimensional Hilbert spaces and let $\Phi\colon\cst(X)\to \cst(Y)$ be an isomorphism. For $\delta>0$, and projections $p\in \cst(X)$ and $q\in\cst(Y)$ of rank 1, we let \[Y_{p,\delta}=\{y\in Y \mid \|\Phi(p)\chi_{\{y\}}\|\geq \delta\}\]
and 
\[X_{q,\delta}=\{x\in X\mid \|\Phi^{-1}(q)\chi_{\{x\}}\|\geq \delta\}.\]
\end{assumption}
 
We point out that isomorphisms between Roe algebras are automatically strongly continuous and rank preserving (see \cite[Lemma 3.1]{SpakulaWillett2013AdvMath}). This will be used with no further mention in the proofs of the lemmas below.

\begin{lemma}\label{Lemma1}
In the setting of Assumption \ref{AssumptionIsoONL}, for all $r>0$ and $\eps>0$ there is $t>0$ so that for all projections $p\in \cstu(X)$ and $q\in\cst(Y)$ with propagation at most $r$ there is $E\subset X$ with $\diam(E)\leq t$ so that 
\[\|\Phi(p)q\chi_E\|\geq (1-\eps)\|\Phi(p)q\|-\eps.\]
\end{lemma}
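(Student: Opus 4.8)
The plan is to combine the coarse-likeness of $\Phi$ (Theorem~\ref{ThmCoarseLike}) with the operator norm localisation property of $Y$. The key observation is that, while $\Phi(p)$ is a bona fide element of $\cst(Y)$, it carries no a priori bound on its propagation, so ONL cannot be applied to $\Phi(p)q$ directly. Coarse-likeness fixes exactly this: since $p$ has propagation at most $r$, its image can be replaced, up to a small norm error, by a finite-propagation operator, to which ONL then applies to produce a unit vector---and hence a set $E\subseteq Y$ of controlled diameter---almost realising the norm of $\Phi(p)q$.

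First I would fix $\eps\in(0,1)$ and $r>0$ (if $\eps\geq 1$ the right-hand side is nonpositive and any singleton $E$ works). Applying Theorem~\ref{ThmCoarseLike} with parameters $\eps/2$ and $r$ produces $s>0$ so that every contraction of propagation at most $r$ is mapped by $\Phi$ to an $(\eps/2)$-$s$-approximable element of $\cst(Y)$. Since $Y$ has ONL, I then invoke it with propagation bound $s+r$ and localisation constant $\rho=1-\eps$ to obtain $t>0$ with the property that every operator $b$ on $\ell_2(Y,H)$ of propagation at most $s+r$ admits a unit vector $\xi$ with $\diam(\supp(\xi))\leq t$ and $\norm{b\xi}\geq\rho\norm{b}$. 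This $t$ is the constant claimed by the lemma.

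Now, given the projection $p$ of propagation at most $r$ and the projection $q\in\cst(Y)$ of propagation at most $r$, note that $p$ is a contraction, so there is $c\in\cst(Y)$ with $\propg(c)\leq s$ and $\norm{\Phi(p)-c}\leq\eps/2$. Then $b:=cq$ has $\propg(b)\leq s+r$, and ONL supplies a unit vector $\xi$ whose support $E:=\supp(\xi)\subseteq Y$ satisfies $\diam(E)\leq t$ and $\norm{cq\xi}\geq\rho\norm{cq}$. As $\xi=\chi_E\xi$, this gives $\norm{cq\chi_E}\geq\norm{cq\xi}\geq\rho\norm{cq}$. Finally, using $\norm{q}\leq 1$ and $\norm{\chi_E}\leq 1$, I would close the estimate:
\[
\norm{\Phi(p)q\chi_E}\geq\norm{cq\chi_E}-\tfrac{\eps}{2}\geq\rho\norm{cq}-\tfrac{\eps}{2}\geq\rho\Big(\norm{\Phi(p)q}-\tfrac{\eps}{2}\Big)-\tfrac{\eps}{2}\geq(1-\eps)\norm{\Phi(p)q}-\eps,
\]
where the last inequality uses $\rho=1-\eps\leq 1$, so that $\rho\eps/2+\eps/2\leq\eps$.

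I expect the only substantive step to be the opening reduction: recognising that $\Phi(p)$ must first be tamed via coarse-likeness, because ONL only bites on operators with a genuine propagation bound. Everything afterwards is a routine accounting of $\eps/2$-errors, together with the elementary identification $E=\supp(\xi)$, which is legitimate precisely because $\chi_E$ (with $E\subseteq Y$) acts on $\ell_2(Y,H)$ on the right, the space on which $\Phi(p)q$ lives.
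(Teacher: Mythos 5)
Your proposal is correct and follows essentially the same route as the paper: use Theorem~\ref{ThmCoarseLike} to replace $\Phi(p)$ by a finite-propagation approximant, multiply by $q$ to get an operator of propagation at most $s+r$, apply ONL with $\rho=1-\eps$ to extract the unit vector and the set $E=\supp(\xi)$, and close with a triangle-inequality accounting of the $\eps/2$-errors. You even correctly read $E$ as a subset of $Y$ (the statement's ``$E\subset X$'' and ``$p\in\cstu(X)$'' are typos), so there is nothing to add.
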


\begin{proof}
Fix $\eps>0$. Theorem \ref{ThmCoarseLike} gives $s>0$ so that $\Phi(p)$ is $\eps/2$-$s$-approximable for all projections $p\in \cst(X )$ with $\propg(p)\leq r$. Fix projections $p\in \cst(X)$ and $q\in \cst(Y)$ with propagation at most $r$. So there is $b\in \cst(Y)$ with $\propg(b)\leq s+r$ so that $\|\Phi(p)q-b\|\leq \eps/2$. As $Y$ has ONL, there exists $t>0$ (depending only on $\eps$ and $r$) and a unit vector $\xi\in \ell_2(Y,H)$ so that $\supp(\xi)\leq t$ and $\|b\xi\|\geq (1-\eps)\|b\|$. Let $E=\supp(\xi)$. Hence, 
\begin{align*}
\|\Phi(p)q\chi_E\|&\geq\|b\chi_E\|- \|\Phi(p)q\chi_E-b\chi_E\|\\
&\geq (1-\eps)\|b\| -\frac{\eps}{2}\\
&\geq (1-\eps)\|\Phi(p)q\|-(1-\eps)\|\Phi(p)q-b\|-\frac{\eps}{2}\\
&\geq (1-\eps)\|\Phi(p)q\|-\eps,
\end{align*}
and we are done.
\end{proof}

Given $n\in\N$, $r\geq 0$, we write 
 \[\Proj_{n,r}(X)=\Big\{p\in \Proj(\cst(X))\mid \rank(p)\leq n\text{ and }\propg(p)\leq r\Big\}.\]
We define $\Proj_{n,r}(Y)$ analogously.

\begin{lemma}\label{LemmaYxndeltaBoundedDiam}
In the setting of Assumption \ref{AssumptionIsoONL}, for all $r>0$ and $\delta>0$, we have that \[\sup_{p\in \Proj_{1,r}(X) }\diam(Y_{p,\delta})<\infty.\]
\end{lemma}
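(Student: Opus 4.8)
The plan is to combine coarse-likeness of $\Phi$ (Theorem~\ref{ThmCoarseLike}) with the fact that, being an isomorphism, $\Phi$ preserves the rank of projections. Fix $r>0$ and $\delta>0$. The first step is to unwind the meaning of $Y_{p,\delta}$ for a rank-$1$ projection $p\in\Proj_{1,r}(X)$: since $\Phi$ is rank preserving, $\Phi(p)$ is a rank-$1$ projection in $\cst(Y)$, say $\Phi(p)=\eta\eta^*$ for a unit vector $\eta\in\ell_2(Y,H)$. Writing $\eta(y)\in H$ for the value of $\eta$ at $y$, a direct computation gives $\|\Phi(p)\chi_{\{y\}}\|=\|\eta(y)\|$ and, more importantly, for any $y,y'\in Y$,
\[
\|\chi_{\{y\}}\Phi(p)\chi_{\{y'\}}\|=\|\eta(y)\|\cdot\|\eta(y')\|.
\]
In particular $Y_{p,\delta}=\{y\in Y\mid \|\eta(y)\|\geq\delta\}$, so for any two points $y,y'\in Y_{p,\delta}$ the $(y,y')$-entry of $\Phi(p)$ has norm at least $\delta^2$.

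The second step feeds this into coarse-likeness. Choose $\eps=\delta^2/2$. Since each $p\in\Proj_{1,r}(X)$ is a contraction of propagation at most $r$, Theorem~\ref{ThmCoarseLike} produces $s=s(r,\eps)>0$, depending only on $r$ and $\delta$ and not on $p$, together with $b\in\cst(Y)$ satisfying $\propg(b)\leq s$ and $\|\Phi(p)-b\|\leq\eps$. Now suppose $y,y'\in Y_{p,\delta}$ satisfy $\partial(y,y')>s$. Then $\chi_{\{y\}}b\chi_{\{y'\}}=0$, whence
\[
\delta^2\leq\|\chi_{\{y\}}\Phi(p)\chi_{\{y'\}}\|=\|\chi_{\{y\}}(\Phi(p)-b)\chi_{\{y'\}}\|\leq\|\Phi(p)-b\|\leq\eps=\frac{\delta^2}{2},
\]
a contradiction. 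Hence any two points of $Y_{p,\delta}$ lie within distance $s$, so $\diam(Y_{p,\delta})\leq s$; as $s$ is independent of $p$, this is the desired uniform bound.

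The argument is short, and its only genuinely delicate point is the middle display of the first paragraph: it is the rank-$1$ structure of $\Phi(p)$ that converts the two separate hypotheses ``$y\in Y_{p,\delta}$'' and ``$y'\in Y_{p,\delta}$'', each of which only controls a single column of $\Phi(p)$, into one large off-diagonal entry $\chi_{\{y\}}\Phi(p)\chi_{\{y'\}}$ that a finite-propagation approximant cannot reproduce. This is precisely where restricting to $\Proj_{1,r}(X)$ is essential, and it is the step I would verify most carefully; the remainder is bookkeeping. I note that, in contrast with the neighbouring lemmas, this bound uses only the u.l.f.\@ hypothesis (through Theorem~\ref{ThmCoarseLike}) and not the ONL assumption of Assumption~\ref{AssumptionIsoONL}.
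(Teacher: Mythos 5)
Your proof is correct, and it takes a genuinely different route from the paper's. The paper deduces the lemma from Lemma~\ref{Lemma1}, whose proof combines coarse-likeness (Theorem~\ref{ThmCoarseLike}) with the ONL hypothesis on $Y$: ONL produces a set $E\subset Y$ of uniformly bounded diameter with $\|\Phi(p)\chi_E\|^2>1-\delta^2$, and then the rank-one structure of $\Phi(p)$ (via $\|\Phi(p)\chi_F\|=\|\chi_F\xi\|$ for the range vector $\xi$) forces $Y_{p,\delta}\subset E$. You instead bypass ONL entirely: the rank-one structure converts the two single-column hypotheses $y,y'\in Y_{p,\delta}$ into the off-diagonal lower bound $\|\chi_{\{y\}}\Phi(p)\chi_{\{y'\}}\|\geq\delta^2$, which no propagation-$s$ approximant can accommodate once $\partial(y,y')>s$; your identity $\|\chi_{\{y\}}\Phi(p)\chi_{\{y'\}}\|=\|\eta(y)\|\cdot\|\eta(y')\|$ is correct, since $\chi_{\{y\}}(\eta\eta^*)\chi_{\{y'\}}=(\chi_{\{y\}}\eta)(\chi_{\{y'\}}\eta)^*$ is rank one. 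What each approach buys: yours is leaner and strictly more general --- as you observe, it needs only that $X$ and $Y$ be u.l.f.\@ (through Theorem~\ref{ThmCoarseLike}) and that $\Phi$ be rank preserving, with no property A/ONL assumption on $Y$ --- whereas the paper's route is essentially free within its own architecture, since Lemma~\ref{Lemma1} is needed anyway for Lemmas~\ref{Lemma2} and~\ref{Lemma3}, and that lemma carries extra information (localisation of products $\Phi(p)q$) that the later arguments use and your argument does not provide.
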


\begin{proof}
By Lemma \ref{Lemma1}, there is $t>0$ so that for all $p\in \Proj_{1,r}(X)$ there is $E\subset Y$ with $\diam(E)\leq t$ so that $\|\Phi(p)\chi_E\|^2> 1-\delta^2$. Fix $p\in \Proj_{1,r}(X)$ and let $E$ be as above. As $\Phi$ is rank preserving, $\Phi(p)$ has rank 1. Pick a unit vector $\xi\in \ell_2(Y,H)$ in the range of $\Phi(p)$. Then \[\|\Phi(p)\chi_F\|=\|\chi_F\Phi(p)\|=\|\chi_F\xi\|\] for all $F\subset Y$. In particular, $\|\chi_E\xi\|^2> 1-\delta^2$. If $y\not\in E$, then 
\[\|\Phi(p)\chi_{\{y\}}\|^2=\|\chi_{\{y\}}\xi\|^2\leq \|\xi\|^2-\|\chi_{Y\setminus \{y\}}\|^2\leq 1-\|\chi_E\xi\|^2 < \delta^2.\]
So, $y\not\in Y_{p,\delta}$. This shows that $Y_{p,\delta}\subset E$ and we must have $\diam(Y_{p,\delta})\leq t$.
\end{proof}

\begin{lemma}\label{Lemma2}
In the setting of Assumption \ref{AssumptionIsoONL}, for all $r>0$ and $\eps>0$ there is $\delta>0$ so that 
 \[\inf_{p\in \Proj_{1,r}(X)}\|\Phi(p)\chi_{Y_{p,\delta}}\|\geq 1-\eps.\]
\end{lemma}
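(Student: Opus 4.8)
The plan is to use that $\Phi$ is a rank-preserving $^*$-isomorphism, so that for each $p\in\Proj_{1,r}(X)$ the operator $\Phi(p)$ is a rank-one projection. Fix a unit vector $\xi=\xi_p\in\ell_2(Y,H)$ spanning its range; then $\Phi(p)\chi_F=\xi\langle\chi_F\xi,\cdot\rangle$, so $\|\Phi(p)\chi_F\|=\|\chi_F\xi\|$ for every $F\subseteq Y$, and in particular $\|\Phi(p)\chi_{\{y\}}\|=\|\xi(y)\|_H$. Consequently $Y_{p,\delta}=\{y\in Y\mid\|\xi(y)\|_H\ge\delta\}$ and the quantity to bound below is $\|\chi_{Y_{p,\delta}}\xi\|$. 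The strategy is to show that almost all of the mass of the unit vector $\xi$ sits on finitely many coordinates carrying a value $\ge\delta$.

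First I would localise $\xi$ on a set of uniformly bounded diameter. Exactly as in the proof of Lemma~\ref{LemmaYxndeltaBoundedDiam}, Lemma~\ref{Lemma1} yields, for a small auxiliary parameter $\eps_1>0$, a radius $t=t(r,\eps_1)$ \emph{independent of $p$} and a set $E=E_p\subseteq Y$ with $\diam(E)\le t$ such that $\|\Phi(p)\chi_E\|^2=\|\chi_E\xi\|^2\ge 1-\eps_1$. This is the only step invoking property A (through ONL and coarse-likeness), and it is already packaged in Lemma~\ref{Lemma1}.

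Next I would invoke uniform local finiteness. Since $\diam(E)\le t$, we have $|E|\le\sup_{y\in Y}|B_t(y)|=:K<\infty$, a bound depending only on $t$. Thus the mass $\ge 1-\eps_1$ carried by $E$ is spread over at most $K$ points. Every $y\in E$ with $\|\xi(y)\|_H\ge\delta$ belongs to $Y_{p,\delta}$, while the points of $E$ with $\|\xi(y)\|_H<\delta$ contribute at most $K\delta^2$ to $\|\chi_E\xi\|^2$. Hence
\[
\|\chi_{Y_{p,\delta}}\xi\|^2\ge\sum_{\substack{y\in E\\ \|\xi(y)\|_H\ge\delta}}\|\xi(y)\|_H^2\ge\|\chi_E\xi\|^2-K\delta^2\ge 1-\eps_1-K\delta^2 .
\]

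Finally I would fix the quantifier order, which is the only real subtlety: given $r$ and $\eps$, first set $\eps_1=\eps/2$, which determines $t$ and therefore $K$; only then choose $\delta>0$ small enough that $K\delta^2\le\eps/2$. With these choices the displayed bound gives $\|\chi_{Y_{p,\delta}}\xi\|^2\ge 1-\eps\ge(1-\eps)^2$, hence $\|\Phi(p)\chi_{Y_{p,\delta}}\|=\|\chi_{Y_{p,\delta}}\xi\|\ge 1-\eps$ for every $p\in\Proj_{1,r}(X)$, as required. Since the genuine analytic content (the uniform localisation of $\xi$) is already provided by Lemma~\ref{Lemma1}, the point to watch is merely that $t$, and hence $K$ and $\delta$, depend only on $r$ and $\eps$ and not on the individual projection $p$, so that the resulting estimate is uniform over $\Proj_{1,r}(X)$.
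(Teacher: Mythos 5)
Your proof is correct and follows essentially the same route as the paper's: both use Lemma~\ref{Lemma1} to localise the range vector of the rank-one projection $\Phi(p)$ on a set $E$ of $p$-independent diameter $t$, then invoke uniform local finiteness to bound $|E|$ and choose $\delta$ so that the points of $E$ outside $Y_{p,\delta}$ contribute negligibly to the mass. The only (immaterial) difference is bookkeeping of constants: the paper takes $\delta<\sqrt{(\eps-\eps^2)/N}$ to land directly on $(1-\eps)^2$, whereas you split the error as $\eps/2+\eps/2$.
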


\begin{proof}
Fix $r>0$ and $\eps\in (0,1)$. By Lemma \ref{Lemma1}, there is $t>0$ so that for all $p\in \Proj_{1,r}(X )$ there is $E\subset Y$ with $\diam(E)\leq t$ such that $\|\Phi(p)\chi_E\|^2> 1-\eps$. As $Y$ is u.l.f., $N=\sup_{y\in Y}|B_t(y)|$ is finite. Let $\delta\in (0, \sqrt{(\eps-\eps^2)/N})$ and fix $p\in \Proj_{1,r}(X)$. Let $\xi\in \ell_2(Y,H)$ be a unit vector in the range of the rank 1 projection $\Phi(p)$. Then, picking $E$ as above, we have that
\begin{align*}
\|\Phi(p)\chi_{ Y_{p,\delta}}\|^2& =\|\chi_{Y_{p,\delta}}\xi\|^2\\
&\geq \|\chi_{E\cap Y_{p,\delta}}\xi\|^2\\
&\geq \|\chi_E\xi\|^2-\|\chi_{ E\setminus Y_{p,\delta}}\xi\|^2\\
&= \|\Phi(p)\chi_E\|^2-\|\Phi(p)\chi_{ E\setminus Y_{p,\delta}}\|^2\\
&\geq 1-\eps-\delta^2N\\
&\geq (1-\eps)^2,
\end{align*}
and we are done.
\end{proof}

Before stating the next lemma, we need to introduce some technical notation. Given positive reals $t$, $r$ and $k$, we denote by $\cD_{t,r,k}(X)$ the set of all families $(p_i)_{i\in \N}$ of orthogonal projections in $\cst(X)$ satisfying:
\begin{enumerate}
\item each $p_i$ has rank at most 1,
\item each $p_i$ has propagation at most $r$, and
\item $|\{i\in\N\mid p_i\chi_E\neq 0\}|\leq k$ for any $E\subset X$ with $\diam(E)\leq t$.
\end{enumerate}
If $(p_i)_{i\in \N}\in \cD_{t,r,k}(X)$, then $\SOTh\sum_{i\in \N}p_i \in \cst(X)$. Let $\bar p=\SOTh\sum_{i\in\N}p_i$. By abuse of notation, we identify $\bar p$ with $(p_i)_{i\in \N}$ and write $\bar p\in \cD_{t,r,k}(X)$. We define $\cD_{t,r,k}(Y)$ analogously, and write $\bar q\in\cD_{t,r,k}(Y)$ if $\bar q=\SOTh\sum_{i\in\N}q_i$ where $(q_i)_{i\in\N}$ is in $\cD_{t,r,k}(Y)$.

\begin{remark}\label{RemarkDktX}
A word on the prototypical elements of $\cD_{t,r,k}(X)$ is 	useful: If $(P_x)_{x\in X}$ is a family of projections on $H$ and $p_x=\chi_{\{x\}}\otimes P_x$, then $(p_x)_{x\in X}$ belongs to $\cD_{t,r,k}(X)$ for any $t$ and $r\geq 0$, where $k=\sup_{x\in X}|B_t(x)|$. More generally, let $(X_n)_n$ be a sequence of disjoint subsets of $X$ with $r=\sup_n\diam(X_n)<\infty$, $\ell\in\N$, and for each $n\in\N$ let $(p_{n,i})_{i=1}^\ell$ be a family of orthogonal projections in $\cB(\ell_2(X_n,H))$ of rank at most 1. Then, for any $t>0$, there is $k>0$ so that $\bar p=((p_{n,i})_{i=1}^\ell)_{n\in\N}\in \cD_{t,r,k}(X)$. Indeed, first notice that the propagation of each $p_{n,i}$ is at most $r$. Moreover, since $X$ is u.l.f., there is $k_0\in\N$ so that any $E\subset X$ with $\diam(E)\leq t$ intersects at most $k_0$-many $X_n$'s. Therefore, $p_{n,i}\chi_E\neq 0$ for at most $k_0\ell$-many $(n,i)$'s. So $\bar p\in \cD_{t,r,k}(X)$ for $k=k_0\ell$. Notice that $k$ depends only on $\ell$ and $t$ (i.e., it depends on neither $\bar p$ not $r$).
\end{remark}

Let $t,r,k$ and $\delta$ be positive reals, $\bar p\in \cD_{t,r,k}(X)$ and $\bar q=(q_i)_{i\in\N}\in \cD_{t,r,k}(Y)$, we write 
\[Y_{\bar p,\delta}=\bigcup_{i\in\N} Y_{p_i,\delta}\text{ and }X_{\bar q,\delta}=\bigcup_{i\in\N} X_{q_i,\delta}.\]

\begin{lemma}\label{Lemma3}
In the setting of Assumption \ref{AssumptionIsoONL}, for all $r>0$ and $\eps>0$, there is $t>0$ so that for all $k\in\N$, there is $\delta>0$ so that 
\[ \sup_{\bar p\in \cD_{t,r,k}(X )} \|\Phi (\bar p)\chi_{Y\setminus Y_{\bar p,\delta}}\|\leq \eps.\]
\end{lemma}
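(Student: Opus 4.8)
The plan is to argue by contradiction, and the guiding principle is this: although the images \(\Phi(p_i)=\xi_i\xi_i^*\) may scatter their ``internal'' \(H\)-mass all over \(Y\) (so that the bulk regions \(Y_{p_i,\delta}\) need \emph{not} be sparse in \(Y\)), the only sparseness we are given, condition~(3) of \(\cD_{t,r,k}(X)\), lives in \(X\). So rather than trying to control the whole family \((\xi_i)\) inside \(Y\), I will manufacture a \emph{single} localised witness to a hypothetically large \(\|\Phi(\bar p)\chi_{Y\setminus Y_{\bar p,\delta}}\|\), transport it back to \(X\) through \(\Phi^{-1}\), and there invoke~(3) to count. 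Throughout I use that Assumption~\ref{AssumptionIsoONL} is symmetric in \(X,Y\), so Lemmas~\ref{Lemma2} and~\ref{LemmaYxndeltaBoundedDiam} apply verbatim to \(\Phi^{-1}\).

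I choose constants in the order dictated by the quantifiers. Given \(r,\eps\): fix \(\rho\in(0,1)\) and \(\eps'>0\) with \(\eps_1:=\rho(\eps-\eps')-\eps'>\eps/2\), and set \(\eps_2:=\eps_1/2\). Coarse-likeness of \(\Phi\) (Theorem~\ref{ThmCoarseLike}) gives \(s'=s'(\eps',r)\); ONL of \(Y\) then gives a radius \(R=R(s',\rho)\) valid for all operators of propagation \(\le s'\), and I set \(N_R:=\sup_y|B_R(y)|<\infty\). Applying Lemma~\ref{Lemma2} to \(\Phi^{-1}\) with propagation parameter \(R\) yields \(\delta'>0\) such that every \(Q\in\Proj_{1,R}(Y)\) has range vector \(\zeta_Q\) with \(\|\chi_{X\setminus X_{Q,\delta'}}\zeta_Q\|\le\eps_2\), while Lemma~\ref{LemmaYxndeltaBoundedDiam} for \(\Phi^{-1}\) bounds \(\diam(X_{Q,\delta'})\le D\) uniformly. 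I then set \(t:=D\) (note \(t\) depends only on \(r,\eps\)). Only now do I fix \(k\in\N\), and finally pick \(\delta>0\) with \(kN_R\delta^2<\eps_1^2-\eps_2^2\).

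Suppose the conclusion fails for this \(t,k,\delta\): there is \(\bar p=\sum_i p_i\in\cD_{t,r,k}(X)\) with \(\|\Phi(\bar p)\chi_G\|>\eps\), where \(G:=Y\setminus Y_{\bar p,\delta}\). Write \(\Phi(p_i)=\xi_i\xi_i^*\) and \(p_i=\omega_i\omega_i^*\), with \((\xi_i)\) and \((\omega_i)\) orthonormal. Coarse-likeness gives \(b\in\cst(Y)\), \(\propg(b)\le s'\), \(\|\Phi(\bar p)-b\|\le\eps'\), so \(\propg(b\chi_G)\le s'\) and \(\|b\chi_G\|\ge\eps-\eps'\); ONL produces a unit vector \(\eta\) with \(\diam\supp(\eta)\le R\) and \(\|b\chi_G\eta\|\ge\rho\|b\chi_G\|\). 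Setting \(\hat\mu:=\chi_G\eta/\|\chi_G\eta\|\) gives a unit vector supported on a set \(Z\subseteq G\) with \(\diam Z\le R\) (so \(|Z|\le N_R\)) and \(\|\Phi(\bar p)\hat\mu\|\ge\eps_1\). Let \(Q:=\hat\mu\hat\mu^*\in\Proj_{1,R}(Y)\) and \(Q_X:=\Phi^{-1}(Q)=\zeta\zeta^*\) with \(\zeta\) a unit vector. Since \(\Phi\) is isometric, \(\|\bar p\zeta\|=\|\bar pQ_X\|=\|\Phi(\bar p)Q\|=\|\Phi(\bar p)\hat\mu\|\ge\eps_1\), and likewise \(|\langle\zeta,\omega_j\rangle|=\|Q_Xp_j\|=\|Q\xi_j\|=|\langle\hat\mu,\xi_j\rangle|\le\|\chi_Z\xi_j\|\) for every \(j\).

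This identity is the crux. Because \(\hat\mu\) is supported on \(G\), each site \(y\in Z\) has \(\|(\xi_j)_y\|=\|\Phi(p_j)\chi_{\{y\}}\|<\delta\), whence \(|\langle\zeta,\omega_j\rangle|^2\le\|\chi_Z\xi_j\|^2<N_R\delta^2\) \emph{uniformly in \(j\)}; and the choice of \(\delta'\) makes \(\zeta\) concentrated on \(Z_X:=X_{Q,\delta'}\), a set of diameter \(\le D=t\), with \(\|\chi_{X\setminus Z_X}\zeta\|\le\eps_2\). Splitting \(\zeta=\chi_{Z_X}\zeta+\chi_{X\setminus Z_X}\zeta\) and using \(\langle\chi_{Z_X}\zeta,\omega_j\rangle=0\) whenever \(\supp p_j\cap Z_X=\emptyset\), Bessel's inequality handles the ``far'' indices and condition~(3) (legitimately applied to \(E=Z_X\), since \(\diam Z_X\le t\)) caps the ``near'' indices at \(k\):
\[
\eps_1^2\le\|\bar p\zeta\|^2=\sum_j|\langle\zeta,\omega_j\rangle|^2\le k\,N_R\delta^2+\eps_2^2<\eps_1^2,
\]
a contradiction. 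I expect this to be the genuinely hard point: one cannot bound \(\|\Phi(\bar p)\chi_G\|\) by a naive Schur/Gram estimate over \((\xi_i)\) (the tails \(\chi_G\xi_i\) may overlap on \(G\) for infinitely many \(i\)), nor by transferring the sparseness of \(\bar p\) to the bulk sets \(Y_{p_i,\delta}\) (which can pile up in \(Y\) through distinct \(H\)-directions, consistently with coarse-likeness). Pulling a single ONL-witness back to \(X\), the one place where hypothesis~(3) is available, is what converts an uncontrolled accumulation in \(Y\) into a clean count; deferring the choice of \(\delta\) until after \(k\) then absorbs the residual term \(kN_R\delta^2\).
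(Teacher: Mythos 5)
Your proof is correct, and it follows a genuinely different route from the paper's. The paper argues directly (no contradiction) and entirely at the level of operator norms: it applies Lemma~\ref{Lemma1} to $\Phi^{-1}$ to get the diameter-$t$ set $E\subset X$ and the estimate $\|\Phi(\bar p)q\|\le\bigl(\|\Phi(\bar p\chi_E)q\|+\theta\bigr)/(1-\theta)$ for $q=\chi_{Y\setminus Y_{\bar p,\delta}}\otimes Q$, uses condition~(3) to reduce $\bar p\chi_E$ to at most $k$ terms, and bounds each $\|\Phi(p_i)q\|\le\theta/k$ by applying Lemma~\ref{Lemma2} to $\Phi$ itself with precision $\theta/k$ (so there it is Lemma~\ref{Lemma2} that makes $\delta$ depend on $k$). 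You instead work with vectors and by contradiction: you re-derive the localisation in the opposite direction (coarse-likeness of $\Phi$ plus ONL of $Y$, i.e.\ the content of Lemma~\ref{Lemma1} for $\Phi$ rather than $\Phi^{-1}$, in vector form) to produce a witness $\hat\mu$ supported in $G$, pull the rank-one projection $\hat\mu\hat\mu^*$ back through $\Phi^{-1}$, and localise the resulting vector $\zeta$ in $X$ using Lemma~\ref{Lemma2} together with Lemma~\ref{LemmaYxndeltaBoundedDiam} applied to $\Phi^{-1}$ --- the latter being an ingredient the paper's proof of this lemma never needs, since its diameter-$t$ set comes straight from Lemma~\ref{Lemma1}. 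Your counting step is also different: the at most $k$ ``near'' terms are small purely by the definition of $Y_{\bar p,\delta}$ and the u.l.f.\ bound $|Z|\le N_R$ (so $\delta$ is chosen by the explicit inequality $kN_R\delta^2<\eps_1^2-\eps_2^2$, with no second appeal to Lemma~\ref{Lemma2}), while the infinitely many ``far'' terms are absorbed by Bessel's inequality, exploiting the orthonormality of $(\omega_j)$ --- something the paper's triangle-inequality count never uses. What the paper's route buys is brevity and uniformity once Lemma~\ref{Lemma1} is in place: no witness extraction, no contradiction, and only the two lemmas already proved. What yours buys is transparency: it isolates exactly where uniform local finiteness enters (through $N_R$), needs Lemma~\ref{Lemma2} only for $\Phi^{-1}$, and makes the quantifier order ($t$ before $k$, $\delta$ after $k$) visibly forced by the final inequality.
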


\begin{proof}
Let $\theta=\eps/(2+\eps)$. By Lemma \ref{Lemma1} applied to $\Phi^{-1}$, $r$ and $\theta$, there exists $t>0$ so that for all projections $p\in \cst(X)$ and $q\in \cst(Y)$ with propagation at most $r$ there is $E\subset X$ with $\diam (E)\leq t$ so that 
\[\|\Phi(p\chi_E)q\|=\|\Phi^{-1}(q)p\chi_E\|\geq (1-\theta)\|\Phi^{-1}(q)p\|-\theta=(1-\theta)\|\Phi(p)q\|-\theta.\]
Fix $k\in\N$. By Lemma \ref{Lemma2}, pick $\delta>0$ so that $\|\Phi(p)\chi_{Y_{p,\delta}}\|^2\geq 1-(\theta/k)^2$ for all $p\in \Proj_{1,r}(X)$. 

Fix $\bar p\in\cD_{t,r,k}(X)$. For each $i\in\N$ with $p_i\neq 0$, $p_i$ has rank 1; hence so has $\Phi(p_i)$. For each $i\in\N$, pick a unit vector $\xi_i\in \ell_2(Y,H)$ in the range of $\Phi( p_i)$. Then 
\begin{align*}
\|\Phi( p_i)(1-\chi_{Y_{ p_i,\delta}})\|^2&=\|(1-\chi_{Y_{ p_i,\delta}})\xi_i\|^2=\|\xi_i\|^2-\|\chi_{Y_{p_i,\delta}}\xi_{i}\|^2\\
&= 1-\|\Phi(p_i)\chi_{Y_{p_i,\delta}}\|^2\leq(\theta/k)^2. 
\end{align*}
In particular, for all $i\in\N$
\[
\|\Phi( p_i)(1-\chi_{Y_{ \bar p,\delta}})\|\leq \|\Phi( p_i)(1-\chi_{Y_{ p_i,\delta}})\|\leq \theta/k.
\] 
 Let $C= Y\setminus Y_{\bar p,\delta}$ and $Q$ be a finite rank projection on $H$. So $q=\chi_C\otimes Q$ is a projection in $\cst(Y)$ and $\propg(q)=0$. As $\propg(\bar p)\leq r$, our choice of $t$ gives $E\subset Y$ with $\diam(E)\leq t$ so that
 \[\|\Phi(\bar p)q\|\leq \frac{\|\Phi(\bar p\chi_E)q\|+\theta}{1-\theta}.\]
 Therefore, as $\bar p\in\cD_{t,r,k}(X)$, we must have 
 \begin{align*}
 \|\Phi( \bar p)q\|&\leq \frac{k \cdot \sup_{i\in\N}\|\Phi( p_i)q\|+\theta}{1-\theta}\\
 &\leq \frac{k\cdot \sup_{i\in\N}\|\Phi( p_i)(1-\chi_{Y_{ \bar p,\delta}})\|+\theta}{1-\theta}\\
 &\leq \frac{2\theta}{1-\theta}
 \end{align*}
 As $Q$ was an arbitrary finite rank projection on $H$, this shows that 
\[
\|\Phi( \bar p)(1-\chi_{Y_{ \bar p,\delta}})\| \leq \frac{2\theta}{1-\theta}\leq \eps,\]
and we are done.
\end{proof}

We need one more lemma before presenting the proof of Theorem \ref{ThmCartan}.

\begin{lemma}\label{LemmaDistYpdelta}
In the setting of Assumption \ref{AssumptionIsoONL}, for every positive reals $\delta$ and $t$ there exists $s>0$ so that for all $E\subset X $ with $\diam(E)\leq t$ and all rank 1 projections $p$ and $q$ in $\cB(\ell_2(E,H))$ we have that $\partial(Y_{p,\delta},Y_{q,\delta})\leq s$. 
\end{lemma}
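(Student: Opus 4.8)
The plan is to connect $p$ and $q$ by a single partial isometry of small propagation, transport it through $\Phi$, and then use coarse-likeness together with the uniform local finiteness of $Y$ to run a pigeonhole argument. Since $\diam(E)\le t$ and $X$ is u.l.f., the set $E$ is finite, so $p$ and $q$ are rank-one projections on $\ell_2(E,H)$, say onto $\C\xi$ and $\C\eta$ for unit vectors $\xi,\eta$ supported on $E$. Let $v$ be the rank-one partial isometry $\zeta\mapsto\langle\zeta,\xi\rangle\eta$, so that $v^*v=p$, $vv^*=q$, $v$ is a contraction in $\cst(X)$, and $\propg(v)\le\diam(E)\le t$. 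Since $\Phi$ is a $^*$-isomorphism, $\Phi(v)$ is a partial isometry in $\cst(Y)$ with $\Phi(v)^*\Phi(v)=\Phi(p)$ and $\Phi(v)\Phi(v)^*=\Phi(q)$.

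Now fix a unit vector $\zeta$ spanning the range of the rank-one projection $\Phi(p)$, and set $\omega=\Phi(v)\zeta$; a short computation shows $\omega$ is a unit vector spanning the range of $\Phi(q)$. A direct calculation gives $\|\Phi(p)\chi_{\{y\}}\|=\|\chi_{\{y\}}\zeta\|$ and $\|\Phi(q)\chi_{\{y\}}\|=\|\chi_{\{y\}}\omega\|$, so that
\[
Y_{p,\delta}=\{y\in Y\mid \|\chi_{\{y\}}\zeta\|\ge\delta\},\qquad Y_{q,\delta}=\{y\in Y\mid \|\chi_{\{y\}}\omega\|\ge\delta\}.
\]
We may assume $\delta\le 1$ and that both sets are nonempty, since otherwise (e.g. $\delta>1$) both are empty and the inequality is understood vacuously. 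As $\Phi$ is an isomorphism, Theorem~\ref{ThmCoarseLike} says $\Phi$ is coarse-like; fixing $\eps=\delta/2$ and $r=t$, this yields $s_0>0$, depending only on $\delta$ and $t$, and $w\in\cst(Y)$ with $\propg(w)\le s_0$ and $\|\Phi(v)-w\|\le\delta/2$, so $\|w\zeta-\omega\|\le\delta/2$ and $\|w\|\le 3/2$.

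Next I run the pigeonhole. Let $N=\sup_{y\in Y}|B_{s_0}(y)|<\infty$ and $\delta'=\delta/(3\sqrt N)$. Fix $y_1\in Y_{q,\delta}$. Then $\|\chi_{\{y_1\}}w\zeta\|\ge\|\chi_{\{y_1\}}\omega\|-\|w\zeta-\omega\|\ge\delta/2$, while $\propg(w)\le s_0$ gives $\chi_{\{y_1\}}w=\chi_{\{y_1\}}w\chi_{B_{s_0}(y_1)}$, whence $\delta/2\le\|w\|\,\|\chi_{B_{s_0}(y_1)}\zeta\|\le\tfrac32\|\chi_{B_{s_0}(y_1)}\zeta\|$. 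Thus $\|\chi_{B_{s_0}(y_1)}\zeta\|\ge\delta/3$, and since $|B_{s_0}(y_1)|\le N$ there is $y'\in B_{s_0}(y_1)$ with $\|\chi_{\{y'\}}\zeta\|\ge\delta/(3\sqrt N)=\delta'$; that is, $y'\in Y_{p,\delta'}$ and $\partial(y',y_1)\le s_0$. Finally I close the threshold gap using Lemma~\ref{LemmaYxndeltaBoundedDiam}: applied with threshold $\delta'$ and propagation bound $r=t$, it gives $D>0$, depending only on $\delta'$ and $t$ (hence only on $\delta$ and $t$), with $\diam(Y_{p,\delta'})\le D$ for every rank-one $p$ of propagation $\le t$. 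Since $\delta'\le\delta$ we have $Y_{p,\delta}\subseteq Y_{p,\delta'}$, so any $y_0\in Y_{p,\delta}$ satisfies $\partial(y_0,y')\le D$, and therefore $\partial(y_0,y_1)\le D+s_0$. As $y_0\in Y_{p,\delta}$ and $y_1\in Y_{q,\delta}$ were arbitrary, $\partial(Y_{p,\delta},Y_{q,\delta})\le s:=D+s_0$, which depends only on $\delta$ and $t$.

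The main obstacle is precisely that the threshold $\delta$ does \emph{not} transfer cleanly across the partial isometry: because the finite-propagation approximant $w$ may smear the mass of $\zeta$ over the whole ball $B_{s_0}(y_1)$, a coordinate carrying mass $\ge\delta$ for $\omega$ only forces a coordinate carrying mass $\ge\delta'$ (with $\delta'<\delta$) for $\zeta$. The device that repairs this loss is Lemma~\ref{LemmaYxndeltaBoundedDiam}, which guarantees that $Y_{p,\delta'}$ has diameter bounded uniformly in $p$; this lets us absorb the point produced at the lower threshold $\delta'$ back into a bounded neighbourhood of the genuine set $Y_{p,\delta}$.
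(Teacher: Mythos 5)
Your proof is correct, and although it opens the same way as the paper's --- both arguments connect $p$ and $q$ by a rank-one partial isometry $v$ of propagation at most $t$ and then use Theorem~\ref{ThmCoarseLike} to approximate $\Phi(v)$ by an operator $w$ of uniformly bounded propagation --- you close the argument by a genuinely different route. The paper proceeds by contradiction along a sequence of pairs $(p_n,q_n)$ and invokes the operator norm localisation property twice, applied directly to the approximants $b_n$, to manufacture sets $A_n$ and $B_n$ of uniformly bounded diameter at uniformly bounded distance from each other, and then shows $Y_{q_n,\delta}\subset A_n$ and $Y_{p_n,\delta}\subset B_n$; thus ONL is used afresh inside the proof. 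You instead argue directly and with explicit constants: after transporting the unit vector $\zeta$ spanning $\ran\Phi(p)$ to $\omega=\Phi(v)\zeta$ spanning $\ran\Phi(q)$ (a step which silently uses that $\Phi$ is rank preserving, which the paper records as automatic), you use only the finite propagation of $w$ together with uniform local finiteness of $Y$ to run a pigeonhole argument placing, next to each point of $Y_{q,\delta}$, a point of $Y_{p,\delta'}$ --- at the cost of dropping the threshold to $\delta'=\delta/(3\sqrt{N})$ --- and you then repair this loss with Lemma~\ref{LemmaYxndeltaBoundedDiam}. So in your proof ONL enters only indirectly, through the already-established Lemma~\ref{LemmaYxndeltaBoundedDiam} (i.e., through Lemma~\ref{Lemma1}), while the paper's proof re-uses ONL but never needs the threshold-dropping device; your version also directly yields the stronger statement that \emph{every} pair $(y_0,y_1)\in Y_{p,\delta}\times Y_{q,\delta}$ satisfies $\partial(y_0,y_1)\le s$ with $s$ depending only on $\delta$, $t$ and $\Phi$, which is the form actually needed in Claim~\ref{ClaimContainment}.

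One small caveat: your reduction ``we may assume both sets are nonempty'' is not fully justified, since for $\delta\le 1$ one of $Y_{p,\delta}$, $Y_{q,\delta}$ could be empty while the other is not, in which case $\partial(Y_{p,\delta},Y_{q,\delta})=+\infty$ under the usual convention. This, however, is a defect of the lemma's formulation rather than of your argument: the paper's own proof by contradiction has the identical blind spot (its containments give no contradiction when a set is empty), and in the lemma's sole application the conclusion is used only through containments that are vacuous for empty sets, so nothing is lost.
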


\begin{proof}
Suppose the lemma fails for $\delta$ and $t$. Without loss of generality, assume $\delta\in (0,1)$. Then there are a sequence of disjoint subsets $(E_n)_n$ of $X$, and sequences of rank 1 projections $(p_n)_n$ and $(q_n)_n$ so that 
\begin{enumerate}
\item $\diam(E_n)\leq t$ for all $n\in\N$,
\item $p_n,q_n\in \cB(\ell_2(E_n,H))$ for all $n\in\N$, and
\item $\partial(Y_{p_n,\delta},Y_{q_n,\delta})>n$ for all $n\in\N$. 
\end{enumerate}
For each $n\in\N$, let $a_n\in \cB(\ell_2(E_n,H))$ be a partial isometry so that $a_na_n^*=p_n$ and $a_n^*a_n=q_n$.

Let $\gamma>0$ be so that $\delta=\gamma(2-\gamma)$. As $\Phi$ is coarse-like (Theorem \ref{ThmCoarseLike}), there is $s>0$ so that $\Phi(a)$ is $\gamma/4$-$s$-approximable for all contractions $a\in \cst(X)$ with $\propg(a)\leq t$. Notice that all operators in each $\cB(\ell_2(E_n,H))$ must have propagation at most $t$. Hence, for each $n\in\N$ pick $b_n\in \cst(Y)$ so that $\propg(b_n)\leq s$ and $\|\Phi(a_n)-b_n\|\leq \gamma/4$. Since $Y$ has ONL, there are $s'>0$ and a sequence of subsets $(A_n)_n$ of $Y$ so that $\diam(A_n)\leq s'$ and $\|b_n\chi_{A_n}\|\geq 1-\gamma/2$ for all $n\in\N$. As $\propg(b_n\chi_{A_n})\leq s$ for all $n\in\N$, we can use that $Y$ has ONL once again in order to obtain a sequence of subsets $(B_n)_n$ of $Y$ so that $\propg(B_n)\leq s'$ and $\|\chi_{B_n}b_n\chi_{A_n}\|\geq 1-3\gamma/4$ for all $n\in\N$. Hence, for all $n\in\N$, 
\begin{align}\label{EqAnBn}
\|\chi_{B_n}\Phi(a_n)\chi_{A_n}\|\geq \|\chi_{B_n}b_n\chi_{A_n}\|-\|\Phi(a_n)-b_n\|> 1-\gamma.
\end{align}
Hence, as $\Phi(a_n)$ is $\gamma/4$-$s$-approximable for all $n\in\N$, this implies that $d(A_n,B_n)\leq s$ for all $n\in\N$. Therefore, \[\diam(A_n\cup B_n)\leq s+2s'\] for all $n\in\N$. As each $a_n$ is a partial isometry, it follows that 
\begin{align*}
\|\Phi(q_n)\chi_{A_n}\|\geq \|\Phi(a_nq_n)\chi_{A_n}\|=\|\Phi(a_na_n^*a_n)\chi_{A_n}\|
=\|\Phi(a_n)\chi_{A_n}\|> 1-\gamma
\end{align*}
for all $n\in\N$. Similarly, we have that $\|\chi_{B_n}\Phi(p_n)\|\geq 1-\gamma $ for all $n\in\N$.

Let $(\xi_n)_n$ and $(\zeta_n)_n$ be sequences of unit vectors in $\ell_2(Y,H)$ so that, for all $n\in\N$, $\xi_n$ and $\zeta_n$ belong to the range of $\Phi(p_n)$ and $\Phi(q_n)$, respectively.
 Given $ n\in\N$ and $y\not\in A_n$, we have that 
\[\|\Phi(q_n)\chi_{\{y\}}\|^2=\|\chi_{\{y\}}\zeta_n\|^2\leq \|\zeta_n\|^2-\|\chi_{A_n}\zeta_n\|^2\leq 1-\|\Phi(q_n)\chi_{A_n}\|^2< \gamma(2-\gamma).\]
As $\delta=\gamma(2-\gamma)$, this implies that $y\not\in Y_{q_n,\delta}$. As $y$ was an arbitrary element in $X\setminus A_n$, this shows that $Y_{q_n,\delta}\subset A_n$. Analogous arguments applied to $(\xi_n)_n$ and $(B_n)_n$ give us that $Y_{p_n,\delta}\subset B_n$ for all $n\in\N$. Therefore, we must have that \[\partial (Y_{p_n,\delta},Y_{q_n,\delta})\leq s+2s'\]
for all $n\in\N$. This contradicts our choice of $(p_n)_n$ and $(q_n)_n$. 
 \end{proof}

\subsection{Cartan masas}\label{SubsectionProof}
We now prove Theorem~\ref{ThmCartan}; even better, we prove a stronger version of it in Theorem~\ref{ThmIsoMakeAsympCoarseLike}. Recall, given an orthonormal basis $\bar\xi=(\xi_n)_n$ of $H$, we denote by $\ell_\infty(X,\bar \xi)$ the masa of $\cst(X)$ consisting of all operators $a\in\cB(\ell_2(X,H))$ such that for all $x\in X$ there is $(\lambda_n)_n\in c_0$ with 
\[
a(\delta_x\otimes \xi_n)=\lambda_n\delta_x\otimes \xi_n \ \text{ for all }\ n\in\N.
\]
This algebra is a masa in the algebra of those operators in $\cB(\ell_2(X,H))$ such that each entry is locally compact, and therefore it is such in $\cst(X)$.

\begin{definition}
Given metric spaces $X$ and $Y$, a map $\Phi\colon\cst(X) \to \cst(Y)$ is \emph{strongly coarse-like} if for all $r>0$ there is $s>0$ so that $\propg(\Phi(a))\leq s$ for all $a\in \cst(X)$ with $\propg(a)\leq r$.
\end{definition}

 \begin{theorem}\label{ThmIsoMakeAsympCoarseLike}
Let $X$ and $Y$ be u.l.f.\@ metric spaces and assume that $Y$ has property $A$. Let $ \Phi\colon\cst(X)\to\cst(Y)$ be an isomorphism. Let $\bar\xi=(\xi_n)$ and $\bar\zeta=(\zeta_n)$ be orthonormal bases of $H$. Then there exists a unitary $v\in \BD(Y)$ such that $\Ad(v)\circ \Phi:\cst(X)\to\cst(Y)$ is strongly coarse-like and 
 \[
 \Ad(v)\circ \Phi(\ell_\infty(X,\bar \xi))=\ell_\infty(Y,\bar \zeta).
 \]
\end{theorem}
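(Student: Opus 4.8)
The plan is to reduce everything to the behaviour of $\Phi$ on the minimal projections of the source masa and then to straighten these out by a single unitary of bounded propagation. Write $p_{x,n}=\chi_{\{x\}}\otimes|\xi_n\rangle\langle\xi_n|$ for the minimal projections of $\ell_\infty(X,\bar\xi)$, indexed by $X\times\N$; since $\Phi$ is rank preserving, $\Phi(p_{x,n})=|\eta_{x,n}\rangle\langle\eta_{x,n}|$ for unit vectors $\eta_{x,n}\in\ell_2(Y,H)$. Because $\Phi$ extends to a strongly continuous isomorphism of the multiplier algebras, which are the band-dominated algebras by Theorem~\ref{prop:mult}, and $\SOTh\sum_{x,n}p_{x,n}=1$, the family $(\eta_{x,n})$ is a complete orthonormal basis of $\ell_2(Y,H)$. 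I claim it suffices to produce a bijection $g=(g_1,g_2)\colon X\times\N\to Y\times\N$ which is \emph{coarse in the first coordinate} (i.e.\ $d(x,x')\le r$ implies $\partial(g_1(x,n),g_1(x',n'))\le s(r)$, and symmetrically) together with a unitary $v\in\BD(Y)$ with $v\eta_{x,n}\in\C\,\delta_{g_1(x,n)}\otimes\zeta_{g_2(x,n)}$ for all $(x,n)$. Indeed, then $\Ad(v)\circ\Phi(p_{x,n})=\chi_{\{g_1(x,n)\}}\otimes|\zeta_{g_2(x,n)}\rangle\langle\zeta_{g_2(x,n)}|$, which are exactly the minimal projections of $\ell_\infty(Y,\bar\zeta)$; as these generate the two masas, $\Ad(v)\circ\Phi(\ell_\infty(X,\bar\xi))=\ell_\infty(Y,\bar\zeta)$. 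Moreover $\Ad(v)\circ\Phi$ is strongly coarse-like: expanding $a$ with $\propg(a)\le r$ as $\SOTh\sum p_{x,n}\,a\,p_{x',n'}$ (nonzero only for $d(x,x')\le r$) and applying the strongly continuous map, each surviving summand $e^Y_{g(x,n)}\,\Ad(v)\Phi(a)\,e^Y_{g(x',n')}$ lives at matrix position $(g_1(x,n),g_1(x',n'))$, and coarseness of $g$ bounds $\partial$ between these by $s(r)$.

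To set up the machinery I first observe that, since $\cst(X)\cong\cst(Y)$ and $Y$ has property A, $X$ inherits property A across the isomorphism, so both spaces satisfy ONL and Assumption~\ref{AssumptionIsoONL} is available. Fixing a small $\eps$, Lemmas~\ref{LemmaYxndeltaBoundedDiam} and~\ref{Lemma2} (with $r=0$) furnish $\delta,t>0$ so that, for every $(x,n)$, the set $Y_{p_{x,n},\delta}$ has diameter at most $t$ and $\|\chi_{Y_{p_{x,n},\delta}}\eta_{x,n}\|\ge 1-\eps$; Lemma~\ref{LemmaDistYpdelta} with $E=\{x\}$ shows these sets lie within a fixed distance of one another as $n$ varies, so all $\eta_{x,n}$ are, up to $\eps$, supported on a single region $R_x\subset Y$ of bounded diameter, from which I pick a base point $c(x)\in R_x$. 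Running Lemma~\ref{LemmaDistYpdelta} with $E=\{x,x'\}$ shows $x\mapsto c(x)$ is bornologous, and the symmetric argument applied to $\Phi^{-1}$ produces a coarse inverse, so $c$ is a coarse equivalence. Using that $c$ is a coarse equivalence, that $Y$ is u.l.f., and that every point of $Y$ carries $\N$-many basis vectors $\zeta_m$, a Hall-type back-and-forth matching then yields the desired bijection $g$, coarse in the first coordinate and with $\partial(g_1(x,n),c(x))$ uniformly bounded.

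The main obstacle is the last step: producing the unitary $v\in\BD(Y)$, i.e.\ \emph{with bounded propagation}, carrying each $\eta_{x,n}$ onto the line through $\delta_{g_1(x,n)}\otimes\zeta_{g_2(x,n)}$. The naive choice $v\eta_{x,n}=\delta_{g_1(x,n)}\otimes\zeta_{g_2(x,n)}$ is a unitary because both families are orthonormal bases, but its propagation is not obviously finite: each $\eta_{x,n}$ is only $(1-\eps)$-localised near $c(x)\approx g_1(x,n)$, and since infinitely many of the $\eta_{x,n}$ pass through any given point of $Y$, the small tails can \emph{a priori} accumulate. The truncation error of $v$ at propagation $s$ is governed by the positive operator $\SOTh\sum_{x,n}|w^s_{x,n}\rangle\langle w^s_{x,n}|$, where $w^s_{x,n}=\chi_{Y\setminus B_s(g_1(x,n))}\eta_{x,n}$, and controlling its norm uniformly is precisely what the collective, multiplicity-bounded estimate of Lemma~\ref{Lemma3} is built for. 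The idea is to apply Lemma~\ref{Lemma3} not to the full family but to the fixed-$n$ subfamilies $(p_{x,n})_{x\in X}$, which lie in some $\cD_{t,0,k}(X)$ with $k=\sup_x|B_t(x)|$ by Remark~\ref{RemarkDktX}, and to arrange the per-layer tolerances $\eps_n$ to be summable; this bounds the $n$-th layer of tails and, after summation, forces $\|v-v_s\|\to 0$, placing $v$ in $\BD(Y)$.

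The delicate point, and the true crux, is that the tail control needs finer scales for large $n$ while the coarseness of $g$ needs a \emph{uniform} spatial displacement $\partial(g_1(x,n),c(x))$; shrinking $\delta_n$ enlarges the localising sets and threatens this uniformity. I expect to reconcile the two by a telescoping/approximate-intertwining construction: rather than defining $v$ in one stroke, conjugate $\Phi$ by a sequence of finite-propagation unitaries $v_k$, each handling one layer at its own scale, with $\propg(v_k)$ controlled and $\sum_k\|v_k-1\|<\infty$, so that the product converges in norm to the required $v\in\BD(Y)$ while keeping the spatial alignment bounded throughout. Once $v$ is in hand, the masa identity and strong coarse-likeness follow from the first paragraph, and the remaining verifications are routine bookkeeping with the lemmas already established.
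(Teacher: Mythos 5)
Your first two paragraphs are essentially the paper's own construction: the coarse equivalence obtained from the localisation sets $Y_{p_{x,1},\delta}$ (via Lemmas~\ref{LemmaYxndeltaBoundedDiam}, \ref{Lemma2}, \ref{LemmaDistYpdelta}), the bijection $g\colon X\times\N\to Y\times\N$ coarse in the first coordinate, and the ``naive'' unitary $v$ with $v\eta_{x,n}=\delta_{g_1(x,n)}\otimes\zeta_{g_2(x,n)}$ are exactly the paper's $f$, $g$ and $v=u_gw^*$ (where $\Phi=\Ad(w)$), and your reduction of both conclusions to the statement $v\in\BD(Y)$ is correct. (One cosmetic repair: the minimal projections do \emph{not} norm-generate the masas, so ``as these generate the two masas'' should be replaced by the relative-commutant characterisation $\ell_\infty(X,\bar\xi)=\{p_{x,n}\mid (x,n)\in X\times\N\}'\cap\cst(X)$, which your bijection of minimal projections does preserve.)

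The genuine gap is the step you yourself flag as the crux: you have no proof that $v\in\BD(Y)$, and the two mechanisms you propose cannot be made to work as described. The layer-by-layer truncation fails quantitatively: in Lemma~\ref{Lemma3} the parameter $\delta$ depends on the tolerance, so summable tolerances $\eps_n$ force $\delta_n\to 0$, and then the localising sets $Y_{p_{x,n},\delta_n}$ (hence the truncation radii $s_n$, cf.\ Claim~\ref{ClaimContainment}) grow without bound in $n$; an $\SOTh$-sum of approximants with unbounded propagations is not a finite-propagation operator, and asserting it lies in $\BD(Y)$ is circular. The telescoping fix is not a proof either: a unitary $v_k$ that rotates the entire layer-$k$ family $(\eta_{x,k})_{x\in X}$ onto basis vectors is generically a rotation of norm of order $1$, and aligning layer $k$ does nothing for layer $k+1$, so there is no mechanism producing $\sum_k\|v_k-1\|<\infty$. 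The paper resolves the crux by a different idea which is absent from your proposal: it proves $v$ is \emph{quasi-local}, i.e.\ $\|\chi_C v\chi_{Y\setminus B_r(C)}\|\le\eps$ uniformly over all $C\subset Y$, and then invokes \cite[Theorem 3.3]{SpakulaZhang2018}, which says that on a property A space quasi-local operators are band-dominated. The point that makes quasi-locality provable where your truncation is not is a duality trick: $\|\chi_C v\chi_D\|=\sup_{\bar q}\|\bar q\,v\chi_D\|$, where $\bar q=\SOTh\sum_{y\in C}\chi_{\{y\}}\otimes Q_y$ runs over families with \emph{one} rank-$\le 1$ projection per point of $C$. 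Pulled back through $\Psi^{-1}=\Ad(u_g^*)$, such a family lies in $\cD_{t,r_0,k}(X)$ with $k$ bounded by the geometry alone (finite multiplicity, independent of $C$ and of the $Q_y$), so Lemma~\ref{Lemma3} applies with a \emph{single} $\delta$ and a \emph{single} radius $r$ for all $C$ simultaneously --- precisely the uniformity your layered scheme destroys. This second, essential use of property A (beyond ONL) is the missing ingredient; without it, or a substitute for it, your proposal does not close.
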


 \begin{proof} 
By \cite[Theorem 4.1]{SpakulaWillett2013AdvMath}, $X$ and $Y$ are coarsely equivalent. Hence $X$ has property A and, as property A and ONL are equivalent for u.l.f.\@ metric spaces \cite[Theorem 4.1]{Sako2014}, we conclude that $X$, $Y$ and $\Phi$ satisfy Assumption \ref{AssumptionIsoONL}. 

For $n\in\N$, let $P_n$ and $Q_n$ be the projections onto $\Span\{\xi_n\}$ and $\Span\{\zeta_n\}$, respectively. For $x\in X$, $y\in Y$ and $n\in\N$, let $p_{x,n}=\chi_{\{x\}}\otimes P_n$ and $q_{y,n}=\chi_{\{y\}}\otimes Q_n$. By \cite[Lemma 4.6]{SpakulaWillett2013AdvMath} (or \cite[Lemma 3.1]{BragaChungLi2019}), there is $\delta>0$ so that $X_{q_{y,1},\delta}$ and $Y_{p_{x,1},\delta}$ are nonempty for all $x\in X$ and all $y\in Y$; such $\delta$ is fixed for the remainder of the proof. Hence, we can pick a map $f\colon X\to Y$ so that $f(x)\in Y_{p_{x,1},\delta}$ for all $x\in X$. By the proof of \cite[Theorem 4.1]{SpakulaWillett2013AdvMath}, $f$ is a coarse equivalence.

Let $X_0\subset X$, $Y_0\subset Y$, $r_0>0$, $(X^x)_{x\in X_0}$ and $(Y^y)_{y\in Y_0}$ be given as in \S\ref{SubsectionCanonicalMap} for $f$, i.e., 
\begin{enumerate}
\item $f\colon X_0\to Y_0$ is a bijection,
\item $X=\bigsqcup_{x\in X_0}X^x$ and $Y=\bigsqcup_{x\in Y_0}Y^y$,
\item $x\in X^x$ and $\diam(X^x)\leq r_0$ for all $x\in X_0$, and
\item $y\in Y^y$ and $\diam(Y^y)\leq r_0$ for all all $y\in Y_0$.
\end{enumerate} 
 
Let $g\colon X\times \N\to Y\times \N$ and $u=u_g\colon\ell_2(X,H)\to \ell_2(Y,H)$ be obtained as in \S\ref{SubsectionCanonicalMap}, i.e., for each $x\in X_0$, $g$ restricts to a bijection $X^x\times \N\to Y^{f(x)}\times \N$ and \[u\delta_x\otimes \xi_n=\delta_{g_1(x,n)}\otimes \zeta_{g_2(x,n)}\]
for all $(x,n)\in X\times \N$. Therefore, the discussion in \S\ref{SubsectionCanonicalMap} implies that $\Psi=\Ad(u)\colon\cst(X)\to \cst(Y)$ is a strongly coarse-like isomorphism.

By \cite[Lemma 3.1]{SpakulaWillett2013AdvMath}, we can pick a unitary $w\colon\ell_2(X,H)\to \ell_2(Y,H)$ so that $\Phi=\Ad(w)$. Let $v=uw^*$, so $\Ad(v)\circ\Phi=\Psi$. Therefore, as noticed above, $\Ad(v)\circ\Phi\colon\cst(X)\to \cst(Y)$ is strongly coarse-like. Moreover, it is clear from the definition of $u$ that 
 \[\Ad(v)\circ \Phi\Big(\ell_\infty\big(X,c_0(\bar \xi)\big)\Big)\subset \ell_\infty\big(Y,c_0(\bar \zeta)\big).\]
Therefore, in order to conclude the proof, we only need to notice that $v\in \BD(Y)$. For that, we now use the technical results proven in \S\ref{SubsectionTechnical}.

\begin{claim}\label{ClaimContainment}
For all $\delta'\in (0,\delta]$, there is $ r>0$ so that for all $y\in Y$ and all rank 1 projections $q\in \cB(\ell_2(\{y\},H))$ we have $Y_{\Psi^{-1}(q),\delta'}\subset B_r(y)$.
\end{claim}

\begin{proof}
Fix $\delta'\in (0,\delta]$. Let $s>0$ be given by Lemma \ref{LemmaDistYpdelta} applied to $\Phi$, $\delta'$ and $r_0$. Lemma \ref{LemmaYxndeltaBoundedDiam} gives $k>0$ so that $\diam(Y_{p,\delta'})\leq k$ for all $p\in \mathrm{Proj}_{1, r_0}(X)$. 

 Fix $y\in Y$ and a rank 1 projection $q\in \cB(\ell_2(\{y\},H))$. Let $y'\in Y_0$ and $x'\in X_0$ be so that $y\in Y^{y'}$ and $y'=f(x')$. Since $g\colon X\times \N\to Y\times \N$ restricts to a bijection $X^{x'}\times \N\to Y^{y'}\times \N$, the definition of $\Psi$ clearly implies that $\Psi^{-1}(q)\in \cB(\ell_2(X^{x'},H))$. Hence, as $p_{x', 1}\in \cB(\ell_2(X^{x'},H))$, $\diam (X^{x'})\leq r_0$, our choice of $s$ implies that $\partial(Y_{\Psi^{-1}(q),\delta'},Y_{p_{x',1},\delta'})\leq s$. By the defining property of $f\colon X\to Y$, we have that $y'= f(x')\in Y_{p_{x',1},\delta}$. As $\delta'\leq \delta$, $y'\in Y_{p_{x',1},\delta'}$. Therefore, as $\diam(Y^{y'})\leq r_0$ we have $\partial (y,y')\leq r_0$, and our choices of $ \ell$ and $k$ imply that 
\[Y_{\Psi^{-1}(q),\delta'}\subset B_{r_0+s+2k}(y)\]
The claim then follows by letting $r=r_0+s+2k$.
\end{proof}

We now show that $v=uw^*$ belongs to $\BD(Y)$. By \cite[Theorem 3.3]{SpakulaZhang2018}, as $Y$ has property A, it is enough to show that $v$ is \emph{quasi-local}.\footnote{An operator $a\in \cB(\ell_2(Y,H))$ is \emph{quasi-local} if for all $\eps>0$ there is $r>0$ so that $\partial(A,B)>r$ implies $\|\chi_Aa\chi_B\|<\eps$ for all $A,B\subset Y$.} Fix $\eps>0$. Let $t>0$ be given by Lemma \ref{Lemma3} for $\eps$, $r_0$ and $\Phi $. Then, for all $k\in\N$ there is $\delta'\in (0,\delta]$ so that 
\begin{align}\label{Eq1}
\|\Phi(\bar p)\chi_{Y\setminus Y_{\bar p,\delta'}}\|\leq \eps 
\end{align}
for all $\bar p=(p_i)_{i\in \N}\in \cD_{t,r_0,k}(X) $.

Before finishing the proof, we introduce some notation: 
 given $C\subset Y$ and a sequence of projections $(Q_y)_{y\in C}$ on $H$ of rank at most 1, we write $q_y=\chi_{\{y\}}\otimes Q_y$ for each $y\in C$.\footnote{Notice that $q_y$ and $Q_y$ (for $y\in C$) are distinct from $q_{y,n}$ and $Q_n$ (for $n\in\N$). We believe that, since the indices are different, this abuse of notation will cause no confusion.} Let 
\[X_C=\Big\{x\in X\mid \exists x'\in X_0\text{ with }x\in X^{x'}\text{ and }C\cap Y^{f(x')}\neq \emptyset\Big\}.\]
By the definition of $\Psi$, it follows that $\Psi^{-1}(q_y)\in \cB(\ell_2(X^{x'},H))$ for all $y'\in Y_0$ with $y\in Y^{y'}$ and $x'=f^{-1}(y')$. Remark \ref{RemarkDktX} then implies that there is $k>0$ so that $\Psi^{-1}(\bar q)=(\Psi^{-1}(q_y))_{y\in C}\in \cD_{t,r_0,k}(X_C)$. Moreover, as noticed in Remark \ref{RemarkDktX}, $k$ depends only on $t$ and $\sup_{y\in Y_0}|Y^{y}|$ (i.e., it depends on neither $C$ nor $(Q_y)_{y\in C}$). Fix such $k$. 

By the defining property of $t$, pick $\delta'\in (0,\delta]$ so that \eqref{Eq1} holds for $k$. Claim \ref{ClaimContainment} gives $r>0$ so that $Y_{\Psi^{-1}(q),\delta'}\subset B_r(y)$ for all $y\in Y$ and all rank 1 projections $q\in \cB(\ell_2(\{y\},H))$. Therefore, if $C\subset Y$ and $(Q_y)_{y\in C}$ is a sequence of projections on $H$ of rank at most 1, it follows that 
\[Y_{\Psi^{-1}(\bar q),\delta}=\bigcup_{y\in C} Y_{\Psi^{-1}(q_y),\delta}\subset B_r(C).\]

 Fix $C\subset Y$, $(Q_y)_{y\in C}$ and $(q_y)_{y\in C}$ as above. Then the previous inequalities give that 
\[\|\Phi(\Psi^{-1}(\bar q))\chi_{Y\setminus B_r(C)}\|\leq \eps.\]
As $\Psi^{-1}=\Ad(u^*)$ and $\Phi=\Ad(w)$, this implies that 
\[\|\bar qv\chi_{Y\setminus B_r(C)}\|=\| \bar quw^*\chi_{Y\setminus B_r(C)}\|=\|wu^*\bar quw^*\chi_{Y\setminus B_r(C)}\|\leq \eps.\]
The arbitrariness of $\bar q=(q_y)_{y\in C}$ (i.e., the arbitrariness of $C\subset Y$ and $(Q_y)_{y\in C}$) gives that $\|\chi_Cv\chi_{Y\setminus B_r(C)}\| \leq \eps$ for all $C\subset Y$.

 Let $A,B\subset Y$ be so that $d(A,B)> r$. Then 
\[
\|\chi_Av\chi_B\|\leq \|\chi_{A\cap Y'}v\chi_{ Y\setminus B_r(A)}\| \leq \eps \]
As $\eps$ was arbitrary, this shows that $v$ is quasi-local, so we are done.
\end{proof}

\begin{remark}
Although we chose to work with fixed bases $\bar \xi$ and $\bar\zeta$ inside each $H$-coordinate of $\ell_2(X,H)$ in Theorem \ref{ThmCartan} (Theorem \ref{ThmIsoMakeAsympCoarseLike}), we chose so merely for simplicity. The same proof holds if for each $x\in X$  we choose basis $\bar \xi^x=(\xi_n^x)_n$ and $\bar\zeta^x=(\zeta_n^x)_n$ of $H$. Precisely, given those choices, let $\prod_{x\in X} c_0(\bar \xi^x)$  be the $\ell_\infty$-sum of $(c_0(\bar \xi^x))_{x\in X}$, where $c_0(\bar \xi^x)$ consists of all $a\in \cB(H)$ so that there is $(\lambda_n)_n\in c_0$ for which  $a \xi^x_n=\lambda_n\xi^x_n$ for all $n\in\N$;  $\prod_{x\in X} c_0(\bar \zeta^x)$ is defined  analogously. Then, if $\Phi\in \Aut(\cst(X))$, there exists a unitary $v\in \BD(X)$ so that 
\[\Ad(v)\circ \Phi\Big(\prod_{x\in X} c_0\big(\bar \xi^x\big)\Big)= \prod_{x\in X} c_0\big(\bar \zeta^x\big).\]
\end{remark}

We are ready to prove Theorem~\ref{thm:main}, which we restate for convenience.
\begin{theorem}\label{ThmIsoCoa}
Let $(X,d)$ be a u.l.f.\@ metric space with property A. The canonical homomorphism 
\[
\Coa(X)\to\Out(\cst(X))
\]
described in \S\ref{SubsectionCanonicalMap} is an isomorphism.
\end{theorem}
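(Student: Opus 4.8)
The plan is to observe that, exactly as in the proof of Theorem~\ref{ThmIsoCoarseAutURA}, the map $\Coa(X)\to\Out(\cst(X))$ has already been shown to be a well-defined injective group homomorphism in \S\ref{SubsectionCanonicalMap}, so the whole content of the statement is \emph{surjectivity}. All the genuine work has been displaced into the Cartan-masa uniqueness theorem, here Theorem~\ref{ThmIsoMakeAsympCoarseLike}, and surjectivity will be a formal consequence of it.

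So I would fix $\Phi\in\Aut(\cst(X))$ together with an orthonormal basis $\bar\xi=(\xi_n)_n$ of $H$, and apply Theorem~\ref{ThmIsoMakeAsympCoarseLike} to the isomorphism $\Phi\colon\cst(X)\to\cst(X)$ in the special case $Y=X$ and $\bar\zeta=\bar\xi$. Inspecting that proof, the unitary $v\in\BD(X)$ it produces is constructed precisely so that $\Ad(v)\circ\Phi$ coincides with the map $\Psi=\Ad(u_g)\rs\cst(X)$ attached in \S\ref{SubsectionCanonicalMap} to an explicit coarse equivalence $f\colon X\to X$ and bijection $g\colon X\times\N\to X\times\N$; that is,
\[
\Ad(v)\circ\Phi=\Phi_{f,\bar\xi,\bar\xi,g}.
\]
Since $v\in\BD(X)$, the automorphism $\Ad(v)$ lies in $\Inn(\BD(X))$, which is a normal subgroup of $\Aut(\cst(X))$ by Corollary~\ref{CorNormalSubgroup}. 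Hence $\Phi$ and $\Phi_{f,\bar\xi,\bar\xi,g}$ determine the same class in $\Out(\cst(X))=\Aut(\cst(X))/\Inn(\BD(X))$. By the definition of the canonical homomorphism, the class $[\Phi_{f,\bar\xi,\bar\xi,g}]$ is exactly the image of $[f]\in\Coa(X)$. Therefore $[\Phi]$ lies in the image of $\Coa(X)\to\Out(\cst(X))$, which proves surjectivity; combined with the previously established injectivity, the homomorphism is an isomorphism.

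The only nontrivial ingredient is Theorem~\ref{ThmIsoMakeAsympCoarseLike}, where property~A (through ONL) and the coarse-likeness results of \S\ref{SecUnifApprox} are used to force the normalized automorphism to respect the large-scale geometry; given that theorem, the deduction above is purely formal, and I expect no further obstacle. If one preferred not to quote the internal construction of that proof, the alternative — and this is the one place I would spend a little care — is to begin from its weaker visible conclusion, namely that $\Ad(v)\circ\Phi$ is strongly coarse-like and satisfies $\Ad(v)\circ\Phi(\ell_\infty(X,\bar\xi))=\ell_\infty(X,\bar\xi)$. Rank preservation then yields a permutation $h$ of the minimal projections $p_{x,n}=\chi_{\{x\}}\otimes P_n$ with $\Ad(v)\circ\Phi(p_{x,n})=p_{h(x,n)}$, and strong coarse-likeness of both $\Ad(v)\circ\Phi$ and its inverse (applied to rank-one partial isometries between such projections) forces $x\mapsto h_1(x,1)$ to be a coarse equivalence $f$ with fibers of uniformly bounded diameter; absorbing the residual scalar phases into a further masa unitary of $\BD(X)$ recovers $\Phi_{f,\bar\xi,\bar\xi,g}$ up to $\Inn(\BD(X))$ and closes the argument in the same way.
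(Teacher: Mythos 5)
Your proposal is correct and takes essentially the same route as the paper: the paper's proof likewise fixes $\Phi\in\Aut(\cst(X))$, applies Theorem~\ref{ThmIsoMakeAsympCoarseLike} with $Y=X$ and $\bar\zeta=\bar\xi$, and inspects that proof's construction (where $\Phi=\Ad(w)$, $v=uw^*$, and $\Ad(u)=\Phi_{f,\bar\xi,\bar\xi,g}$ for the coarse equivalence $f$ built there) to conclude $\Ad(u)\circ\Phi^{-1}=\Ad(v)\in\Inn(\BD(X))$, hence $[\Phi]=T([f])$ and surjectivity follows. Your fallback sketch avoiding the inspection of that proof is extra; the core argument matches the paper's.
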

\begin{proof}
Let $T\colon\Coa(X)\to\Out(\cst(X))$ be the injective homomorphism constructed in \S\ref{SubsectionCanonicalMap}. Fix $\Phi\in \Aut(\cst(X))$. Let $v\in \BD(X)$ be given by Theorem \ref{ThmCartan} for $\Phi$. Moreover, let $f\colon X\to X$, $u\colon \ell_2(X,H)\to\ell_2(X,H)$ and $w\colon \ell_2(X,H)\to\ell_2(X,H)$ be as in the proof of Theorem \ref{ThmCartan} for $X=Y$ and $\bar\xi=\bar\zeta $. Hence, $\Phi=\Ad(w)$, $v=uw^*$. Notice that $T(f)=[\Ad(u)]$ when the latter is computed in $\Out(\cst(X))$.

We are left to show that $T(f)=[\Phi]$, that is, that $\Ad(u)\circ\Phi^{-1}\in \Inn(\BD(X))$. But this follows since
\[\Ad(u)\circ\Phi^{-1}=\Ad(u)\circ\Ad(w^*)=\Ad(v)\]
and $v\in \BD(X)$. 
\end{proof}

\section{Applications} \label{SecApp}

In this section, we use Theorem \ref{thm:uniform} and Theorem \ref{thm:main} in order to compute --- or at least better understand --- $\Out(\cstu(X))$ and $\Out(\cst(X))$ for some specific spaces $X$. In \S\ref{SubsectionOutX}, \S\ref{SubsectionOutNZ} and \S\ref{SubsectionOutNZn}, we apply our results to $\{n^2\mid n\in\N\}$, $\N^n$, and $\Z^n$, while in \S\ref{SubsectionSolBauSol} and \S\ref{SubsectionLamp}, we work with the solvable Baumslag-Solitar groups $B(1,n)$ and the lamplighter group $F\wr \Z$ for a finite group $F$. For brevity, we skip some definitions in these subsections and refer the reader to an appropriate source. 

\subsection{Outer automorphisms of the (uniform) Roe algebra of $\{n^2\mid n\in\N\}$}\label{SubsectionOutX}
Denote the group of permutations on $\N$ by $S_\infty$. Let $\sim_0$ be the equivalence relation on $S_\infty$ given by $\pi\sim_0\pi'$ if there is $n_0\in\N$ so that $\pi(n)=\pi'(n)$ for all $n\geq n_0$. Clearly, $N=\{\pi\in S_\infty\mid \pi\sim_0 \mathrm{Id}_\N\}$ is a normal subgroup of $S_\infty$, so $S_\infty/{\sim_0}=S_\infty/N$ is a group.

\begin{corollary}
Let $X=\{n^2\mid n\in\N\}$. Then $\mathrm{BijCoa}(X) $ is isomorphic to $S_\infty/{\sim_0}$. In particular, $\Out(\cstu(X)) $ is isomorphic to $S_\infty/{\sim_0}$.
\end{corollary}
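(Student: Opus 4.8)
The plan is to prove the first isomorphism $\BijCoa(X)\cong S_\infty/{\sim_0}$ directly and then feed it into Theorem~\ref{thm:uniform}; the second statement is then immediate, since $X$ is u.l.f.\ and, being a subspace of $\Z$, has property A (property A is inherited by subspaces). So the entire content is the computation of $\BijCoa(X)$.

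The decisive feature of $X$ is that consecutive gaps $d(n^2,(n+1)^2)=2n+1$ tend to infinity. First I would use this to show that \emph{every} bijection of $X$ is a bijective coarse equivalence. Fix $r>0$; writing $x=a^2$ and $y=b^2$ with $a<b$ gives $d(x,y)\ge 2b-1$, so the set of pairs of distinct points at distance $<r$ is finite. Hence for any bijection $f$ the coarseness condition is constrained only on finitely many pairs, and a single large $s$ witnesses it; the same argument applies to $f^{-1}$, and $f\circ f^{-1}$, $f^{-1}\circ f$ are the identity. Thus the set of bijective coarse equivalences of $X$ is exactly the full symmetric group $\mathrm{Sym}(X)$, which under the identification $n^2\leftrightarrow n$ is $S_\infty$.

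Next I would identify the closeness relation with cofinite agreement. If $f$ and $g$ agree off a finite set, then $\sup_x d(f(x),g(x))$ is a finite maximum, so $f$ and $g$ are close. Conversely, suppose $f(x)\neq g(x)$ for infinitely many $x$; writing $f(x)=a^2$ and $g(x)=b^2$ with $a\neq b$ gives $d(f(x),g(x))=|a-b|(a+b)\ge a=\sqrt{f(x)}$, and since $f$ is injective the value $\sqrt{f(x)}$ is unbounded on this infinite set, so $f$ and $g$ are not close. Under $n^2\leftrightarrow n$ this is precisely the relation $\sim_0$, whence $\BijCoa(X)$, being $\mathrm{Sym}(X)$ modulo closeness, is isomorphic to $S_\infty/{\sim_0}$ as groups. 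Combining with Theorem~\ref{thm:uniform} yields $\Out(\cstu(X))\cong\BijCoa(X)\cong S_\infty/{\sim_0}$.

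The only delicate points are the two ``growing gaps'' estimates, and in particular the converse half of the closeness characterisation, where injectivity of $f$ must be invoked to force $\sqrt{f(x)}$ to be unbounded on the set of disagreement; once these are in place the result is a direct application of Theorem~\ref{thm:uniform}.
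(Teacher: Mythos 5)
Your proof is correct, and its skeleton is the same as the paper's: identify the bijective coarse equivalences of $X$ with the full symmetric group, identify closeness with eventual agreement, and feed the resulting isomorphism $\BijCoa(X)\cong S_\infty/{\sim_0}$ into Theorem~\ref{thm:uniform}. The paper asserts the two combinatorial facts without proof, whereas you supply the growing-gap estimates; in doing so you correctly isolate the point the paper's phrasing glosses over, namely that ``close iff eventually equal'' requires injectivity (it fails for, say, two distinct constant maps), which is harmless here since only bijections are compared. The one genuinely different ingredient is the verification of property A. The paper proves it operator-algebraically: $\cstu(X)$ is generated by $\ell_\infty(X)$ and $\cK(\ell_2(X))$, so the corona $\cstu(X)/\cK(\ell_2(X))\cong\ell_\infty/c_0$ is nuclear, hence $\cstu(X)$ is nuclear, which is equivalent to property A. You instead note that $X$ is a metric subspace of $\Z$ and that property A passes to subspaces --- a standard coarse-geometric fact (alternatively, the growing gaps give $X$ asymptotic dimension $0$); this step would deserve a citation but is certainly sound. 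Both routes are valid: yours is more elementary and stays entirely on the geometric side of the correspondence, while the paper's argument has the side benefit of exhibiting the structure of $\cstu(X)$ as an extension of $\ell_\infty/c_0$ by the compacts.
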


\begin{proof}
First, notice that $X$ has property A. For this, notice that $\cstu(X)$ is generated by $\ell_\infty(X)$ and $\mathcal K(\ell_2(X))$. Moreover the uniform Roe corona of $X$, the algebra $\cstu(X)/\mathcal K(\ell_2(X))$ is isomorphic to $\ell_\infty/c_0$, and it is therefore nuclear. Since $\mathcal K(\ell_2(X))$ is nuclear, so is $\cstu(X)$, and therefore $X$ has property $A$. 

A map $f\colon X\to X$ is a bijective coarse equivalence if and only if $f$ is a bijection. So the group of bijective coarse equivalences of $X$ is isomorphic to $S_\infty$. Moreover, two maps $f,g\colon X\to X$ are close if and only if they eventually coincide, i.e., there is $n_0\in \N$ so that $f(n^2)=g(n^2)$ for all $n>n_0$. The result now follows. 
\end{proof}

Denote the group of \emph{cofinite partial bijections} on $\N$ by $S_\infty^*$, i.e., 
\begin{align*}
S_\infty^*=\Big\{(\pi,A,B)\in \N^\N\times \cP(\N)\times \cP(\N)\colon &|A^\complement|, |B^\complement|<\infty \text{ and } \\
& \pi\restriction A\colon A\to B\text{ is a bijection}\Big\}.\end{align*}
By a slight abuse of notation, we denote by $\sim_0$ the equivalence relation on $S^*_\infty$ given by $(\pi,A,B)\sim_0(\pi',A',B')$ if there is $n_0\in\N$ so that $\pi(n)=\pi'(n)$ for all $n\geq n_0$. 

\begin{corollary}
Let $X=\{n^2\mid n\in\N\}$. Then $\mathrm{Coa}(X)/\sim$ is isomorphic to $S^*_\infty/\sim_0$. In particular, $\Out(\cst(X)) $ is isomorphic to $S^*_\infty/\sim_0$.
\end{corollary}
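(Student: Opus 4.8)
The plan is to reduce everything to a computation of $\Coa(X)$. As established in the preceding corollary (and its proof), $X$ is u.l.f.\ and has property A, so Theorem \ref{thm:main} applies and gives $\Out(\cst(X))\cong\Coa(X)$, where $\Coa(X)$ denotes the group of coarse equivalences of $X$ modulo closeness (written $\mathrm{Coa}(X)/{\sim}$ in the statement). It therefore suffices to produce an isomorphism $\Coa(X)\cong S^*_\infty/{\sim_0}$. Throughout I identify $X$ with $\N$ via $n\mapsto n^2$, so a self-map of $X$ becomes a map $f\colon\N\to\N$ and the metric reads $d(n,m)=|n^2-m^2|=|n-m|(n+m)$.

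The whole computation rests on one elementary fact: for $n\neq m$ one has $d(n,m)=|n-m|(n+m)\geq n+m$, so for each $r>0$ the set of non-diagonal pairs with $d(n,m)<r$ is finite (it forces $n,m<r$). I would draw three consequences. First, \emph{every} map $f\colon\N\to\N$ is coarse: given $r$, the finitely many non-diagonal $r$-close pairs have images at some finite maximal distance, which supplies the required $s$. Second, any coarse equivalence $f$ is injective on a cofinite set with cofinite image, and $f(n)\to\infty$. Indeed, a coarse inverse $h$ satisfies $h\circ f\sim\Id$ and $f\circ h\sim\Id$; and for any $g$, the relation $g\sim\Id$ forces $g(n)=n$ off a finite set (if $g(n)\neq n$ for infinitely many $n$, then on those indices $|g(n)^2-n^2|\geq g(n)+n\geq n\to\infty$, contradicting boundedness of the sup). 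Hence $f$ is injective on the cofinite set $A=\{n:h(f(n))=n\}$ with cofinite image $B=f(A)$, and $f(n)\to\infty$. (The converse also holds: extending $(f|_A)^{-1}\colon B\to A$ arbitrarily yields a coarse inverse, so coarse equivalences are \emph{exactly} the eventually injective maps with cofinite image.) Third, for coarse equivalences closeness is cofinite agreement: if $f_1\sim f_2$ then $\sup_n|f_1(n)^2-f_2(n)^2|<\infty$, but both $f_i(n)\to\infty$, so any index with $f_1(n)\neq f_2(n)$ contributes at least $f_1(n)+f_2(n)\to\infty$; hence $f_1$ and $f_2$ agree off a finite set.

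With these in hand I would build the isomorphism by passing to germs at infinity. To a coarse equivalence $f$ assign the cofinite partial bijection $(f|_A,A,B)\in S^*_\infty$ from the second consequence; its class in $S^*_\infty/{\sim_0}$ is independent of the choice of $A$ (two choices agree on a cofinite set) and, by the third consequence, depends only on the closeness class of $f$. Conversely, any $(\pi,A,B)\in S^*_\infty$ extends off $A$ to a self-map of $\N$ that is a coarse equivalence (its coarse inverse being any extension of $(\pi^{-1},B,A)$), and whose germ is $(\pi,A,B)$ up to $\sim_0$. These assignments are mutually inverse, and they are homomorphisms because composition of coarse equivalences corresponds on cofinite sets to composition of partial bijections: for large $n$ one has $g(n)=\pi_g(n)$ and $\pi_g(n)\in A_f$, so $f(g(n))=\pi_f(\pi_g(n))$. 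This yields $\Coa(X)\cong S^*_\infty/{\sim_0}$, and composing with Theorem \ref{thm:main} gives the stated description of $\Out(\cst(X))$.

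The main obstacle is the closeness analysis in the third consequence: although closeness is a priori a statement about the squared quantity $|f_1(n)^2-f_2(n)^2|$, one must exploit that the values of a coarse equivalence escape to infinity in order to collapse it to honest cofinite agreement — and the same phenomenon is what lets coboundedness be read as cofiniteness of the image. Once these are in place, the remaining verifications (well-definedness, bijectivity, and the homomorphism property) are routine germ bookkeeping.
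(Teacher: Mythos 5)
Your proof is correct and follows essentially the same route as the paper: characterize coarse equivalences of $\{n^2\mid n\in\N\}$ as cofinite partial bijections, identify closeness with eventual agreement, and then invoke Theorem \ref{thm:main} together with property A from the preceding corollary. The paper simply asserts these two characterizations as clear, whereas you supply the (correct) elementary verifications via the estimate $|n^2-m^2|\geq n+m$.
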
 

\begin{proof}
Clearly, a map $f\colon X\to X$ is a coarse equivalence if and only if $f$ there are cofinite $A,B\subset \N$ so that $f\restriction A\colon A\to B$ is a bijection. Moreover, maps $f,g\colon X\to X$ are close if and only if they eventually coincide. The result follows. 
\end{proof}

 \subsection{Outer automorphisms of the uniform Roe algebras of $\N$ and $\Z$}\label{SubsectionOutNZ}

\begin{corollary}\label{CorOutNandZ}
The group $\mathrm{BijCoa}(\N)$ is trivial and $\mathrm{BijCoa}(\Z)$ is isomorphic to $\Z_2$. In particular, $\Out(\cstu(\N)) $ is trivial and $\Out(\cstu(\Z)) $ is isomorphic to $\Z_2$. 
\end{corollary}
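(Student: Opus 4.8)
The plan is to reduce both statements to a computation of the groups $\BijCoa(\N)$ and $\BijCoa(\Z)$ and then invoke Theorem~\ref{ThmIsoCoarseAutURA}. First I would record that $\N$ and $\Z$ have property A: $\Z$ is a finitely generated abelian (hence amenable) group, so its Cayley graph has property A, and $\N$ is coarsely equivalent to the subspace $\{0,1,2,\dots\}\subseteq\Z$, to which property A passes (alternatively, both have asymptotic dimension $1$). As both spaces are u.l.f., Theorem~\ref{ThmIsoCoarseAutURA} supplies canonical isomorphisms $\BijCoa(\N)\cong\Out(\cstu(\N))$ and $\BijCoa(\Z)\cong\Out(\cstu(\Z))$, so it suffices to prove that $\BijCoa(\N)$ is trivial and $\BijCoa(\Z)\cong\Z_2$.

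The core is a coarse-geometric lemma: every bijective coarse equivalence $f$ of $\N$ is close to $\mathrm{Id}_\N$, and every bijective coarse equivalence of $\Z$ is close to either $\mathrm{Id}_\Z$ or the reflection $\rho\colon n\mapsto -n$. The first reduction is that a bijection $f$ is a coarse equivalence precisely when $f$ and $f^{-1}$ are both coarse (one may take $h=f^{-1}$ in Definition~\ref{def:close}); telescoping the scale-$1$ control then shows $f$ and $g:=f^{-1}$ are both $C$-Lipschitz for some $C$, so $f$ is a bi-Lipschitz, hence proper, bijection. For $\N$ I would bound the displacement $|f(n)-n|$ via first- and last-passage times: set $\tau_N=\min\{t:f(t)\ge N\}$ and $\sigma_N=\max\{t:f(t)<N\}$, which are finite since $f(t)\to\infty$. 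Injectivity on $[0,\tau_N)$ gives $\tau_N\le N$, while surjectivity forces $[0,N)\subseteq f([0,\sigma_N])$, whence $\sigma_N\ge N-1$; the scale-$1$ bound then pins the witnessing values inside the windows $f(\tau_N)\in[N,N+C)$ and $f(\sigma_N)\in[N-C,N)$. Feeding these through the Lipschitz control of $g$ (e.g.\ $g(m)\ge g(w)-C^2$ for a suitable $w\in[m-C,m)$ with $g(w)\ge m-1$, and dually from $\tau$) yields $n-C^2\le f(n)\le n+1+C^2$, i.e.\ uniformly bounded displacement. Hence $\BijCoa(\N)$ is trivial.

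For $\Z$ the extra ingredient is the end invariant. A bounded-step walk cannot change sign infinitely often as $t\to+\infty$, since each sign change forces a visit to one of the finite windows $[0,C)$ or $[-C,0)$, contradicting injectivity; thus $\mathrm{sign}(f(t))$ is eventually constant as $t\to+\infty$ and as $t\to-\infty$, and properness together with surjectivity forces these two eventual signs to be opposite. So $f$ either preserves or swaps the two ends of $\Z$. If it preserves them, then outside a finite set $f$ restricts to bi-Lipschitz bijections of each ray, and the half-line estimate above gives $\sup_n|f(n)-n|<\infty$, so $f$ is close to $\mathrm{Id}_\Z$; if it swaps them, applying the previous case to $\rho\circ f$ shows $f$ is close to $\rho$. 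Since $|n-(-n)|=2|n|\to\infty$, the maps $\mathrm{Id}_\Z$ and $\rho$ are not close, so assigning to each class $[f]$ its action on the ends defines an isomorphism $\BijCoa(\Z)\to\Z_2$; combining with Theorem~\ref{ThmIsoCoarseAutURA} completes the proof. The main obstacle is exactly the displacement bound on $\N$: the bi-Lipschitz estimate by itself only controls the growth of $f$, and one genuinely needs to combine bijectivity (through the passage-time counting) with the Lipschitz control of $f^{-1}$ to upgrade this to a uniform bound on $|f(n)-n|$.
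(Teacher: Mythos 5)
Your proposal is correct, and its overall skeleton coincides with the paper's: reduce via Theorem~\ref{ThmIsoCoarseAutURA} to computing $\BijCoa(\N)$ and $\BijCoa(\Z)$, prove the half-line statement first, then deduce the $\Z$ statement by separating the two ends and applying the $\N$ result (composing with the reflection in the orientation-reversing case). Where you genuinely diverge is in the proof of the key lemma that $\BijCoa(\N)$ is trivial. The paper argues by contradiction: it extracts a sequence $(x_n)$ with $|f(x_n)-x_n|>n$, sets $z_n=\max\{z\mid f^{-1}(z)\le x_n\}$, and derives a violation of coarseness of $f^{-1}$. You instead give a direct quantitative argument: the passage-time counts $\tau_N\le N$ and $\sigma_N\ge N-1$ (which encode bijectivity) combined with the Lipschitz control of $f^{-1}$ produce an explicit displacement bound $|f(n)-n|\le C^2+O(1)$. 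Both proofs use exactly the same two ingredients (injectivity/surjectivity counting plus coarseness of the inverse), but your version is effective and avoids subsequence bookkeeping, while the paper's is shorter to write and purely qualitative. Two points where the paper is more careful than your sketch: first, in the end-preserving case on $\Z$, the restriction of $f$ to $\N$ is a bijection onto a set that agrees with $\N$ only up to a finite set, so the half-line lemma does not apply verbatim; the paper fixes this by explicitly modifying $f$ on finite sets (the Claim producing $h_1\cup h_2$), and in your approach the passage-time counts need the corresponding $O(1)$ corrections --- routine, but it should be said. Second, to get $\BijCoa(\Z)$ of order exactly two one must note that $\mathrm{Id}_\Z$ and $n\mapsto -n$ are not close; you do check this, which the paper leaves implicit, just as you make explicit the (standard) verification that $\N$ and $\Z$ have property A, which the paper also omits.
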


\begin{proof}
Fix a bijective coarse equivalence $f\colon\N\to \N$. We are going to prove that $f$ is close to the identity $\mathrm{Id}_\N$. Suppose this is not the case. Then there is a sequence $(x_n)_n$ in $\N$ so that $|f(x_n)-x_n|>n$ for all $n\in\N$. Without loss of generality, we can assume that $(x_n)_n$ is strictly increasing. Moreover, replacing $f$ by $f^{-1}$ if necessary, we can also assume that $f(x_n)+n<x_n$ for all $n\in\N$. For each $n\in\N$, let $z_n=\max\{z\in \N\mid f^{-1} (z)\leq x_n\}$. As $f$ is a bijection and $f(x_n)+n<x_n$, it follows that $z_n-f(x_n)>n$ for all $n\in\N$. As $f^{-1}$ is expanding, it follows that $\lim_{n}(f^{-1}(z_{n}+1)-x_n)=\infty$. So, as $f^{-1}(z_n)\leq x_n$, we have that $\lim_{n}(f^{-1}(z_{n}+1)-f^{-1}(z_n))=\infty$. This contradicts coarseness of $f^{-1}$. This shows that $\mathrm{BijCoa}(\N) $ is the trivial group.

Now fix a bijective coarse equivalence $f\colon\Z\to \Z$. So either $\lim_{n\to \infty}f(n)=\infty$ or $\lim_{n\to \infty}f(n)=-\infty$. Assume that $\lim_{n\to \infty}f(n)=\infty$ and let $x_0=\min(f(\N))$. 

\begin{claim}
There are bijective coarse equivalences $h_1\colon\N\to \N$ and $h_2\colon\Z\setminus \N\to \Z\setminus \N$ so that $f$ is close to the bijection $h_1\cup h_2\colon\Z\to \Z$. 
\end{claim}

\begin{proof}
Notice that $f$ is close to $g=f-x_0$ and that $g(\N)$ is a cofinite subset of $\N$, say $n_0=|\N\setminus g(\N)|$. Pick bijections \[i\colon\{-n_0,\ldots, -1\}\to \N\setminus g(\N)\] and \[j\colon g^{-1}(\N\setminus g(\N))\to g(\{-n_0,\ldots,-1\}),\] and notice that $g$ is close to 
\[h(x)=\left\{\begin{array}{ll}
g(x),& x\in \N,\\
i(x), & x\in \{-n_0,\ldots, -1\},\\
j(x),& x\in g^{-1}(\N\setminus g(\N)),\\
\end{array}\right.\]
For each $x\in \Z$, let $h_0(x)=h(x-n_0)$ and let $h_1=\restriction \N$ and $h_2=h\restriction \Z\setminus \N$. Since $h$ is close to $h_0$, the result follows. 
 \end{proof}

 Let $h_1$ and $h_2$ be given by the claim above. As $\mathrm{BijCoa}(\N) $ is the trivial group, it follows that $h_1$ is close to $\mathrm{Id}_\N$ and $h_2$ is close to $\mathrm{Id}_{\Z\setminus \N}$ . So $f$ is close to $\mathrm{Id}_\Z$. 
 
If $\lim_{n\to \infty}f(n)=-\infty$, then proceeding analogously as above, we obtain bijective coarse equivalences $h_1\colon\N\to \Z\setminus \N$ and $h_2\colon\Z\times \N\to \N$ so that 
\begin{enumerate}
\item $h_1$ is close to the map $x\in \N\to -x-1\in \Z\setminus \N$,
\item $h_2$ is close to the map $x\in \Z\setminus \N\to -x-1\in \N$, and 
\item $f$ is close to $h_1\cup h_2$.
\end{enumerate}
As $ h_1\cup h_2$ is close to $-\mathrm{Id}_\Z$, so is $f$. 

We have then shown that $\mathrm{BijCoa}(\Z) $ is isomorphic to $\{-\mathrm{Id}_\Z,\mathrm{Id}_\Z\}$. This completes the proof.

The last statement follows from the above and Theorem \ref{thm:uniform}.
\end{proof}

 \subsection{Outer automorphisms of the Roe algebra of $\Z^n$}\label{SubsectionOutNZn}
Recall, given metric spaces $(X,d)$ and $(Y,\partial)$, a map $f\colon X\to Y$ is a \emph{coarse Lipschitz equivalence}\footnote{Coarse Lipschitz equivalences are also referred to as \emph{quasi-isometries} in the literature.} if it is cobounded and there is $L>0$ so that 
\[L^{-1}d(x,y)-L\leq \partial(f(x),f(y))\leq Ld(x,y)+L\]
for all $x,y\in X$. Define 
\[\mathrm{CoaLip}(X)=\Big\{f\colon X\to X\mid f\text{ is a coarse Lipschitz equivalence}\Big\}/{ \sim },\]
where $\sim$ is the closeness relation on functions $X\to X$. Clearly, $\mathrm{CoaLip}(X)$ is a group under composition, i.e., $[f]\circ[g]=[f\circ g]$.

Given $n\in\N$, the inclusion $\Z^n\hookrightarrow \R^n$ is a coarse equivalence (coarse Lipschitz equivalence even). Therefore $\Coa(\Z^n)\cong \Coa(\R^n)$. Moreover, notice that a map $ \R\to \R$ is coarse if and only if it is coarse Lipschitz \cite[Theorem 1.4.13]{NowakYuBook}. Therefore, we have that \[ \Coa(\Z^n)\cong \mathrm{CoaLip}(\R^n).\]
As a consequence of that, results in the literature give us the next corollaries of Theorem \ref{thm:main}. We denote by $\mathrm{PL}_\delta(\Z)$ the group of piecewise linear homeomorphisms $f\colon\R\to \R$ so that $\{|f'(x)|\mid x\in \R\}\subset [M^{-1},M]$ for some $M>0$. Modding out by the closeness relation, we obtain the group $\mathrm{PL}_\delta(\Z)/\sim $.
 
\begin{corollary}
The group $\Out(\cst(\Z))$ has trivial center. Moreover, $\Out(\cst(\Z))$ is isomorphic to $\mathrm{PL}_\delta(\Z)/{\sim}$.
\end{corollary}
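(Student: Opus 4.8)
The plan is to reduce the statement, through the machinery already in place, to a purely group-theoretic computation about quasi-isometries of the line. By Theorem~\ref{thm:main} we have $\Out(\cst(\Z))\cong\Coa(\Z)$, and by the discussion preceding the corollary the inclusion $\Z\hookrightarrow\R$ together with \cite[Theorem 1.4.13]{NowakYuBook} gives $\Coa(\Z)\cong\mathrm{CoaLip}(\R)$. Thus it suffices to prove the two assertions for $G:=\mathrm{CoaLip}(\R)$, namely that $G\cong\mathrm{PL}_\delta(\Z)/{\sim}$ and that the center $Z(G)$ is trivial.

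For the isomorphism I would argue by hand. Every $f\in\mathrm{PL}_\delta(\Z)$ is bi-Lipschitz, hence a coarse Lipschitz equivalence, and this assignment descends to a homomorphism $\mathrm{PL}_\delta(\Z)/{\sim}\to G$ which is injective since both groups are formed modulo the same closeness relation. For surjectivity I would show every coarse Lipschitz equivalence $g$ of $\R$ is close to an element of $\mathrm{PL}_\delta(\Z)$: the two-sided bound forces $g$ to send each end of $\R$ to an end, so after composing with $x\mapsto-x$ if necessary $g$ is coarsely increasing; replacing $g$ by $\bar g(x)=\sup_{t\le x}g(t)$ yields a non-decreasing map at bounded distance from $g$ which is still coarse Lipschitz, and linearly interpolating the values $\bar g(n)$ at integer points produces a PL homeomorphism whose slopes lie in $[L^{-1},L]$ for the Lipschitz constant $L$ and which is close to $g$.

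For the center I would first record the structure of $G$. Up to closeness an orientation-preserving coarse Lipschitz equivalence of $\R$ is determined by, and can be prescribed independently through, its germs at $+\infty$ and at $-\infty$; writing $\Gamma$ for the group of germs at $+\infty$ of increasing bi-Lipschitz homeomorphisms modulo closeness, this gives $G^+\cong\Gamma\times\Gamma$ for the orientation-preserving subgroup, while the involution $\iota=[x\mapsto-x]$ satisfies $\iota^2=1$ and swaps the two factors, so that $G\cong(\Gamma\times\Gamma)\rtimes\Z_2$. A direct computation in this semidirect product shows that $\iota$ and every class $(a,b)\iota$ fail to be central, and that $(a,b)\in\Gamma\times\Gamma$ is central exactly when $a=b\in Z(\Gamma)$; hence $Z(G)\cong Z(\Gamma)$ and the problem reduces to proving $Z(\Gamma)=1$.

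The heart of the argument, and the step I expect to be the main obstacle, is showing that a central germ $[b]\in Z(\Gamma)$ is trivial. Suppose not; then $b(x)-x$ is unbounded, say there are $x_n\to\infty$ with $\Delta_n:=b(x_n)-x_n\to+\infty$, the case $\to-\infty$ being symmetric. Passing to a sparse subsequence with $x_{n+1}>x_n+3\Delta_n$, I would build a single increasing PL homeomorphism $g$ of slope $2$ on each $[x_n,x_n+\Delta_n]$, slope $1/2$ immediately afterwards until it returns to the diagonal, and slope $1$ elsewhere; then $g\in\Gamma$, $g(x_n)=x_n$, and $g(b(x_n))=g(x_n+\Delta_n)=x_n+2\Delta_n$, so that $b(g(x_n))-g(b(x_n))=(x_n+\Delta_n)-(x_n+2\Delta_n)=-\Delta_n\to-\infty$, contradicting $b\circ g\sim g\circ b$. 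This forces $b\sim\mathrm{Id}$, whence $Z(\Gamma)=1$, $Z(G)=1$, and the corollary follows. The delicate points are arranging the return-to-diagonal construction so that $g$ genuinely lies in $\Gamma$, and verifying the independence of the two germs in the product decomposition; both are elementary but require care.
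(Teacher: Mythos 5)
Your reduction is exactly the paper's: Theorem~\ref{thm:main} plus the identification $\Coa(\Z)\cong\mathrm{CoaLip}(\R)$ from the discussion preceding the corollary. Where you diverge is in what comes next. The paper disposes of both assertions by citation, quoting \cite[Theorem 1.2]{Sankaran2006PAMS} for $\mathrm{CoaLip}(\R)\cong\mathrm{PL}_\delta(\Z)/{\sim}$ and \cite[Theorem 1.1]{Chakraborty2019IndPureApplMath} for triviality of the center, whereas you reprove both group-theoretic facts from scratch. Your center argument is a correct, self-contained replacement for Chakraborty's theorem: the splitting of the orientation-preserving part into the two germ groups (gluing two end-germs at the origin does produce a quasi-isometry, so the germs are indeed independent), the semidirect-product bookkeeping showing $Z(G)$ is the diagonal copy of $Z(\Gamma)$ and that no element involving the flip is central, and the bump construction with $g(x_n)=x_n$, $g(x_n+\Delta_n)=x_n+2\Delta_n$, rejoining the diagonal at $x_n+3\Delta_n$, giving $b(g(x_n))-g(b(x_n))=-\Delta_n\to-\infty$, all check out; the case $\Delta_n\to-\infty$ reduces to the other by replacing $b$ with the central element $b^{-1}$. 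What the paper's route buys is brevity; what yours buys is a proof verifiable without leaving the paper, making visible that everything beyond Theorem~\ref{thm:main} is elementary.

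One step fails as written, though it is easily repaired. In the surjectivity argument you interpolate $\bar g(x)=\sup_{t\le x}g(t)$ at the integers and assert the result is a PL homeomorphism with slopes in $[L^{-1},L]$. But $\bar g$ is only non-decreasing and can be constant on intervals of definite length (take $g$ equal to the identity on $(-\infty,0]$, constant $0$ on $[0,5]$, and $x\mapsto x-5$ beyond $5$: this is a quasi-isometry and $\bar g=g$), so the integer interpolation can have slope $0$ on some segments --- it need not be injective, hence is not a homeomorphism, and no lower slope bound holds. The fix is to interpolate on a sparser grid $k_0\Z$: since $g$ is a quasi-isometry and $\|\bar g-g\|_\infty\le C$, one has $\bar g((n+1)k_0)-\bar g(nk_0)\ge L^{-1}k_0-L-C\ge 1$ once $k_0$ is large enough, so that interpolation has slopes bounded between two positive constants and is still close to $g$. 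Relatedly, the fact that $\bar g$ is at bounded distance from $g$ (coarse monotonicity of end-preserving quasi-isometries of $\R$) is true but is itself one of the points needing care: it requires a chaining argument showing that the images of $[t,x]\cap\Z$ and of $[x,x+T]\cap\Z$ both pass within $2L$ of any intermediate level, which bounds $g(t)-g(x)$ for $t\le x$ by a constant depending only on $L$. Neither issue affects the architecture of your proof.
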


\begin{proof}
This follows immediately from Theorem \ref{thm:main}, the discussion preceding the corollary, 
\cite[Theorem 1.1]{Chakraborty2019IndPureApplMath}, and \cite[Theorem 1.2]{Sankaran2006PAMS}.
\end{proof}

\begin{corollary}\label{CorThomp}
The group $\Out(\cst(\Z))$ contains isomorphic copies of the following groups:
\begin{enumerate}
\item $\mathrm{PL}_\kappa(\R)$, the group of piecewise linear homeomorphisms $f\colon\R\to \R$ so that $\overline{\{x\in \R\mid f(x)\neq x\}}$ is compact, 
\item Thompson's group $F$,\footnote{We refer the reader to \cite[Section 1]{CannonFloydParry1996EnseignMath} for the definition of Thompson's group $F$.} and 
\item the free group of rank the continuum.
\end{enumerate}
\end{corollary}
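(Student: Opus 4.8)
The plan is to reduce the entire statement to a single embedding construction into the coarse Lipschitz equivalence group of the line. By Theorem~\ref{thm:main} together with the identifications made in \S\ref{SubsectionOutNZn} (where $\Coa(\Z)\cong\mathrm{CoaLip}(\R)$, using that a map $\R\to\R$ is coarse exactly when it is coarse Lipschitz), we obtain a group isomorphism
\[
\Out(\cst(\Z))\cong\mathrm{CoaLip}(\R).
\]
Hence it suffices to embed each of the three groups into $\mathrm{CoaLip}(\R)$, the group of coarse Lipschitz self-equivalences of $\R$ modulo closeness.

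The key device is a \emph{self-similar scaling homomorphism}. Let $G$ be the group of orientation-preserving bi-Lipschitz homeomorphisms of $[0,1]$ fixing the endpoints, and for $n\geq 0$ let $\psi_n\colon[0,1]\to[2^n,2^{n+1}]$ be the increasing affine bijection $\psi_n(t)=2^n(1+t)$. Given $h\in G$, I would define $\Theta(h)\colon\R\to\R$ by $\Theta(h)(x)=x$ for $x\leq 1$ and $\Theta(h)(x)=\psi_n\circ h\circ\psi_n^{-1}(x)$ for $x\in[2^n,2^{n+1}]$, and then verify the following routine points: (i) since $h$ fixes $0$ and $1$, the pieces glue continuously and $\Theta(h)$ is a well-defined increasing bijection of $\R$; (ii) affine conjugation is compatible across scales, so $\Theta$ is a homomorphism; (iii) affine rescaling preserves bi-Lipschitz constants, so $\Theta(h)$ is $L$-bi-Lipschitz whenever $h$ is, hence a genuine coarse Lipschitz equivalence (and when $h$ is piecewise linear with finitely many slopes, $\Theta(h)\in\mathrm{PL}_\delta(\Z)$); (iv) if $h(t_0)\neq t_0$ for some $t_0\in(0,1)$, then $|\Theta(h)(\psi_n(t_0))-\psi_n(t_0)|=2^n|h(t_0)-t_0|\to\infty$, so $\Theta(h)\not\sim\mathrm{Id}$. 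Point (iv) shows that $\Theta$ descends to an \emph{injective} homomorphism $\overline\Theta\colon G\hookrightarrow\mathrm{CoaLip}(\R)$.

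With $\overline\Theta$ in hand, I would realize each of the three groups inside $G$ and restrict. Thompson's group $F$ is precisely the group of piecewise linear homeomorphisms of $[0,1]$ with dyadic breakpoints and slopes in $2^{\Z}$ fixing the endpoints (see \cite{CannonFloydParry1996EnseignMath}); these are bi-Lipschitz, so $F\leq G$. For $\mathrm{PL}_\kappa(\R)$ I would first establish $\mathrm{PL}_\kappa(\R)\cong\mathrm{PL}_c(0,1)$, the group of piecewise linear homeomorphisms of $(0,1)$ supported in a compact subinterval: conjugation by a fixed homeomorphism $\R\to(0,1)$ that is affine on each $[m,m+1]$ carries a compactly supported PL homeomorphism of $\R$ (which meets only finitely many such pieces) to a compactly supported PL homeomorphism of $(0,1)$, and conversely; since $\mathrm{PL}_c(0,1)\leq G$, this gives $\mathrm{PL}_\kappa(\R)\hookrightarrow G$. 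Composing with $\overline\Theta$ then embeds both $F$ and $\mathrm{PL}_\kappa(\R)$ into $\Out(\cst(\Z))$.

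The main obstacle is the free group of rank continuum. It cannot be extracted from the piecewise linear world at all: by the Brin--Squier theorem, $\mathrm{PL}^+([0,1])$ contains no nonabelian free subgroup, so here the image of $\overline\Theta$ must consist of genuinely non-PL bi-Lipschitz maps. What is required is a free subgroup of $G$ of rank $2^{\aleph_0}$; this is the content of \cite[Theorem 1.2]{Sankaran2006PAMS} on free subgroups of maximal rank in the relevant bi-Lipschitz group of $\R$, and it can alternatively be produced directly by a ping-pong argument using continuum many bi-Lipschitz homeomorphisms of $[0,1]$ in ``general position''. Restricting $\overline\Theta$ to such a subgroup finishes the proof. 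The delicate part throughout is precisely this last step: guaranteeing the independence of continuum many generators while keeping each generator bi-Lipschitz, so that its image under $\overline\Theta$ remains a bona fide coarse Lipschitz equivalence of $\R$.
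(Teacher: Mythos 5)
Your reduction to $\mathrm{CoaLip}(\R)$ is exactly the paper's; the paper then finishes in one line by quoting \cite[Theorem 1.3]{Sankaran2006PAMS}, which asserts precisely that $\mathrm{CoaLip}(\R)$ contains the three groups in question. Your scaling homomorphism $\overline\Theta$ is correct (points (i)--(iv) do check out), and the resulting embeddings of Thompson's group $F$ and of $\mathrm{PL}_\kappa(\R)\cong\mathrm{PL}_c(0,1)$ into the group $G$ of endpoint-fixing bi-Lipschitz homeomorphisms of $[0,1]$ are sound: for items (1) and (2) you have in effect reproved the relevant part of Sankaran's theorem rather than cited it, which is a legitimate and more self-contained route.

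The gap is item (3). Your construction requires a free subgroup of $G$ itself of rank $2^{\aleph_0}$, and neither justification you offer provides one. The citation \cite[Theorem 1.2]{Sankaran2006PAMS} is mis-aimed: as this paper itself uses that theorem (see the corollary preceding this one), it identifies $\Out(\cst(\Z))\cong\mathrm{CoaLip}(\R)$ with $\mathrm{PL}_\delta(\Z)/{\sim}$, and says nothing about free subgroups of bi-Lipschitz groups of $[0,1]$ --- nor could any PL-type statement, by the very Brin--Squier theorem you invoke. Sankaran's free-group statement is part of his Theorem 1.3 and produces a free subgroup of $\mathrm{CoaLip}(\R)$ directly, so citing it here is not available to you without collapsing your argument back into the paper's. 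As for the fallback, ping-pong does not run with continuum many generators on $[0,1]$: the ping-pong lemma needs pairwise disjoint sets, one per generator, and a separable space admits no uncountable family of disjoint open sets, which is exactly why the known proofs are not ``routine ping-pong in general position.'' The statement you need is true, but a correct proof is genuinely nontrivial; for instance: for each nontrivial reduced word $w$ in $n$ letters, the set $\{\bar g\in PSL_2(\R)^n \mid w(\bar g)\neq 1\}$ is open and dense (the complement of a proper subvariety), so Mycielski's theorem yields a Cantor set freely generating a copy of $F_{2^{\aleph_0}}$ inside $PSL_2(\R)$; since free groups are projective, this group lifts isomorphically into $\widetilde{PSL_2(\R)}$, acting on $\R$ by bi-Lipschitz homeomorphisms commuting with $x\mapsto x+1$, hence of bounded displacement, and conjugating by an arctangent-type homeomorphism $\R\to(0,1)$ then lands in $G$. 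Without some such argument (or the honest citation of Sankaran's Theorem 1.3), part (3) of your proposal remains unproved.
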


\begin{proof}
This follows immediately from Theorem \ref{thm:main}, the discussion above and \cite[Theorem 1.3]{Sankaran2006PAMS}.
\end{proof}
 
Given a metric space $(X,d)$, a map $f\colon X\to X$ is a bi-Lipschitz equivalence if there is $L>0$ so that 
\[L^{-1}d(x,y)\leq d(f(x),f(y))\leq Ld(x,y)\]
for all $x,y\in X$. We let \[\mathrm{BiLip}(X)=\Big\{f\colon X\to X\mid f\text{ is a bi-Lipschitz equivalence}\Big\}.\]
 So $\mathrm{BiLip}(X)$ is a group under composition (notice that we do not mod out the bi-Lipschitz equivalences by closeness). 
 
 \begin{corollary}
Given $n\in\N$, the group $\Out(\cst(\Z^n))$ contains isomorphic copies of the following groups:
\begin{enumerate}
\item $\mathrm{BiLip}(\mathbb S^{n-1})$, where $\mathbb S=\{z\in \C\mid |z|=1\}$, and 
\item $\mathrm{BiLip}(\mathbb D^n,\mathbb S^{n-1})$.
\end{enumerate}
\end{corollary}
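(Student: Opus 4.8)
The plan is to reduce the statement to one purely about $\mathrm{CoaLip}(\R^n)$ and then to produce the two embeddings by explicit geometric constructions. Since $\Z^n$ is u.l.f.\ and amenable, it has property A, so Theorem~\ref{thm:main} applies and gives $\Out(\cst(\Z^n))\cong\Coa(\Z^n)$; combined with the identification $\Coa(\Z^n)\cong\mathrm{CoaLip}(\R^n)$ recorded in the discussion preceding this corollary, it suffices to exhibit injective group homomorphisms
\[
\mathrm{BiLip}(\mathbb{S}^{n-1})\hookrightarrow \mathrm{CoaLip}(\R^n)\quad\text{and}\quad \mathrm{BiLip}(\mathbb{D}^n,\mathbb{S}^{n-1})\hookrightarrow \mathrm{CoaLip}(\R^n).
\]

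For the first embedding I would use the radial cone. Given $h\in\mathrm{BiLip}(\mathbb{S}^{n-1})$, define $F_h\colon\R^n\to\R^n$ by $F_h(0)=0$ and $F_h(x)=|x|\,h(x/|x|)$ for $x\neq 0$. As $h$ maps the sphere to itself, $F_h$ preserves norms, and writing $x=r\omega$, $y=s\eta$ with $r\geq s$ and using the identity $|r\omega-s\eta|^2=(r-s)^2+rs|\omega-\eta|^2$ one gets $|x-y|\geq s|\omega-\eta|$ and $|x-y|\geq r-s$; hence if $h$ is $L$-bi-Lipschitz then $|F_h(x)-F_h(y)|\leq sL|\omega-\eta|+(r-s)\leq(L+1)|x-y|$, and applying this to $h^{-1}$ (using $F_h^{-1}=F_{h^{-1}}$) yields the lower bound. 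Thus $F_h\in\mathrm{BiLip}(\R^n)\subset\mathrm{CoaLip}(\R^n)$. The assignment $h\mapsto[F_h]$ is a homomorphism since $F_{h_1}\circ F_{h_2}=F_{h_1\circ h_2}$ (here one uses $|h_2(\omega)|=1$), and it is injective modulo closeness: if $\sup_x|F_h(x)-x|<\infty$ then $\sup_{r,\omega}r\,|h(\omega)-\omega|<\infty$, which forces $h=\mathrm{Id}$ upon letting $r\to\infty$.

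For the second embedding the same principle applies, now transporting the \emph{full} homeomorphism of the pair $(\mathbb{D}^n,\mathbb{S}^{n-1})$ rather than only its boundary values, so that its interior behaviour survives in $\mathrm{CoaLip}(\R^n)$; this is where the argument is most delicate and where I expect the main obstacle to lie. The difficulty is genuine: any construction whose output differs from a cone only on a bounded region of $\R^n$ is close to the identity and is therefore killed in $\Coa(\R^n)$, so the embedding must propagate the interior data of a disk map to arbitrarily large scales (for instance by acting self-similarly across the dyadic annuli $\{2^{k}\leq|x|\leq 2^{k+1}\}$) while remaining a bona fide coarse Lipschitz equivalence. Verifying that the resulting assignment is simultaneously a homomorphism, coarse Lipschitz, and injective modulo closeness is the heart of the matter; I would either carry out this annular self-similar construction directly or invoke the corresponding computation of $\mathrm{CoaLip}(\R^n)$ available in the literature, exactly as Corollary~\ref{CorThomp} and its predecessors invoke the one-dimensional results.
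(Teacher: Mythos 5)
Your reduction is exactly the paper's: $\Z^n$ is amenable and hence has property A, Theorem~\ref{thm:main} gives $\Out(\cst(\Z^n))\cong\Coa(\Z^n)$, and the discussion preceding the corollary identifies this group with $\mathrm{CoaLip}(\R^n)$, so everything comes down to embedding the two groups into $\mathrm{CoaLip}(\R^n)$. At that point the paper's entire proof is a citation of \cite[Theorem 1.1]{Mitraankaran2019TopAppl}, for \emph{both} embeddings. Your treatment of item (1) therefore goes beyond the paper: the radial cone $F_h(x)=|x|\,h(x/|x|)$, the identity $|r\omega-s\eta|^2=(r-s)^2+rs|\omega-\eta|^2$, the resulting bound $|F_h(x)-F_h(y)|\le (L+1)|x-y|$ combined with $F_h^{-1}=F_{h^{-1}}$, the cocycle identity $F_{h_1}\circ F_{h_2}=F_{h_1\circ h_2}$, and injectivity modulo closeness via unboundedness of $r\,|h(\omega)-\omega|$ are all correct, so for $\mathrm{BiLip}(\mathbb{S}^{n-1})$ you have a complete, self-contained argument where the paper has only a reference.

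Item (2) is where your proposal stops being a proof, and you say so yourself. You correctly diagnose the obstruction (any map agreeing with a cone outside a bounded set is killed in $\Coa(\R^n)$), but the dyadic-annulus idea you sketch is genuinely incomplete, and the missing part is not routine. Placing rescaled copies of $h$ on disjoint balls of linearly growing radius does handle the subgroup of maps fixing $\mathbb{S}^{n-1}$ \emph{pointwise}: there the glued map is bi-Lipschitz, the assignment is an honest homomorphism (each ball is preserved), and nontriviality of $h$ produces linearly growing displacement. The real difficulty is the full pair group $\mathrm{BiLip}(\mathbb{D}^n,\mathbb{S}^{n-1})$, whose elements may move the boundary sphere: one must combine the annular construction with the cone construction of item (1), and these interact badly --- conjugating the annular copies by a cone map $F_g$ distorts the chosen balls by errors that grow linearly with the scale, so the homomorphism property fails even up to closeness, and a more careful (equivariant) construction is needed. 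The paper sidesteps all of this by quoting \cite[Theorem 1.1]{Mitraankaran2019TopAppl}, which is precisely the ``result in the literature'' you gesture at without identifying; note also that what is needed is an embedding theorem, not a ``computation'' of $\mathrm{CoaLip}(\R^n)$, which is not known. So: same architecture as the paper, item (1) fully and correctly done by hand, item (2) a genuine gap unless the citation is supplied.
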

 
 \begin{proof}
 This follows immediately from Theorem \ref{thm:main}, the discussion above and \cite[Theorem 1.1]{Mitraankaran2019TopAppl}.
 \end{proof}

 \subsection{Solvable Baumslag-Solitar groups}\label{SubsectionSolBauSol}
 Given $n\in\N$, $\Q_n$ denotes the $n$-adic rationals and $B(1,n)$ denotes the \emph{solvable Baumslag-Solitar group}, i.e., the group generated by elements $a$ and $b$ subject to the relation $aba^{-1}=b^n$. We endow $B(1,n)$ with the metric given by its Cayley graph structure (see \cite[Definition 1.2.7]{NowakYuBook} for definitions).

 \begin{corollary}\label{CorBaumslagSolitar}
Given $n\in\N$, the group $\Out(\cst(B(1,n)))$ is isomorphic to $\mathrm{BiLip}(\R)\times \mathrm{BiLip}(\Q_n)$.
 \end{corollary}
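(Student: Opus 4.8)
The plan is to combine Theorem~\ref{thm:main} with the quasi-isometric rigidity of $B(1,n)$ established by Farb and Mosher. First I would verify the hypotheses of Theorem~\ref{thm:main}: the group $B(1,n)$ is solvable, hence amenable, and amenable groups (more precisely, their Cayley graphs with the word metric) have property A, as recalled in \S\ref{SecIntro}; moreover $B(1,n)$ with the word metric of a finite generating set is u.l.f. Theorem~\ref{thm:main} then yields a group isomorphism $\Out(\cst(B(1,n)))\cong\Coa(B(1,n))$, reducing the statement to a purely coarse-geometric computation of $\Coa(B(1,n))$.

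Next I would identify $\Coa(B(1,n))$ with the quasi-isometry group of $B(1,n)$. Since $B(1,n)$ carries a word metric it is large-scale geodesic, and on such a space every coarse map is automatically coarse Lipschitz --- the same phenomenon exploited in \S\ref{SubsectionOutNZn} for $\R$ (cf.\ \cite[Theorem 1.4.13]{NowakYuBook} for that case). Consequently a coarse self-equivalence of $B(1,n)$ together with its coarse inverse forms a pair of coarse Lipschitz (i.e.\ quasi-isometric) equivalences, so that $\Coa(B(1,n))=\mathrm{CoaLip}(B(1,n))=\mathrm{QI}(B(1,n))$, the group of self-quasi-isometries of $B(1,n)$ modulo closeness.

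Finally I would invoke the rigidity theorem of Farb and Mosher \cite{FarbMosher1998Inventiones}, which computes the quasi-isometry group of $B(1,n)$ and gives
\[
\mathrm{QI}(B(1,n))\cong \mathrm{BiLip}(\R)\times\mathrm{BiLip}(\Q_n),
\]
the $\R$-factor recording the behaviour along the horocyclic (height) direction and the $\Q_n$-factor the behaviour along the $n$-adic boundary. Chaining the three identifications produces $\Out(\cst(B(1,n)))\cong\mathrm{BiLip}(\R)\times\mathrm{BiLip}(\Q_n)$, as desired.

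The genuinely hard input is \emph{external}: the quasi-isometric rigidity theorem of Farb--Mosher, whose proof lies well beyond the scope of this paper. On our side the only point demanding care is the equality $\Coa(B(1,n))=\mathrm{QI}(B(1,n))$, where one must check that the coarse inverse guaranteed by Definition~\ref{def:close} can be upgraded to a genuine quasi-inverse and that the closeness relation used to form $\Coa$ coincides with the bounded-distance relation used to form $\mathrm{QI}$. Both are routine consequences of the geodesic hypothesis, but they are precisely the steps where that hypothesis enters.
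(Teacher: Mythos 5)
Your proposal is correct and follows essentially the same route as the paper's proof: verify property A via solvability/amenability of $B(1,n)$, apply Theorem~\ref{thm:main} to reduce to computing $\Coa(B(1,n))$, upgrade coarse equivalences to coarse Lipschitz ones using finite generation (the paper cites \cite[Corollary 1.4.15]{NowakYuBook} where you invoke the large-scale geodesic property underlying it), and finish with Farb--Mosher's \cite[Theorem 8.1]{FarbMosher1998Inventiones}. No substantive difference.
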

 
 \begin{proof}
 Since $B(1,n)$ is solvable, it is amenable. Therefore, as $B(1,n)$ is finitely generated, it must also have property A \cite[Theorem 4.14]{NowakYuBook}. By Theorem \ref{thm:main}, we only need to compute $\Coa(B(1,n))$. As $B(1,n)$ is a finitely generated group, we have that $\Coa(B(1,n)) =\mathrm{CoaLip}(B(1,n))$ \cite[Corollary 1.4.15]{NowakYuBook}. The result then follows since it is known that $\mathrm{CoaLip}(B(1,n))$ is isomorphic to $\mathrm{BiLip}(\R)\times \mathrm{BiLip}(\Q_n)$ (see \cite[Theorem 8.1]{FarbMosher1998Inventiones}).
 \end{proof}

 \subsection{The lamplighter group $F\wr \Z$}\label{SubsectionLamp}
Given a group $F$, we denote the wreath product of $F$ and $\Z$ by $F\wr \Z$ (we refer the reader to \cite[Definition 2.6.2]{NowakYuBook} for a precise definition). This group is commonly called the \emph{lamplighter group $F\wr \Z$}, and we endow $F\wr \Z$ with the metric given by its Cayley graph structure \cite[Definition 1.2.7]{NowakYuBook}.

Consider the semidirect product $(\mathrm{BiLip}(\Q_n)\times \mathrm{BiLip}(\Q_n))\rtimes \Z_2$ given by the action of $\Z_2$ on $\mathrm{BiLip}(\Q_n)\times \mathrm{BiLip}(\Q_n)$ of switching factors.\footnote{Recall that if $N$ and $H$ are groups and $\alpha\colon H\curvearrowright N$ is an action, then $N\rtimes H$ denotes the \emph{semidirect product}, i.e., the set $N\times H$ endowed with the product $(n,h)\cdot (n',h')=(n\alpha(h)n' ,hh')$.}

\begin{corollary}\label{CorLamplighter}
Let $F$ be a group with $|F|=n$. Then the group $\Out(\cst(F\wr \Z))$ is isomorphic to \[\Big(\mathrm{BiLip}(\Q_n)\times \mathrm{BiLip}(\Q_n)\Big)\rtimes \Z_2\]
\end{corollary}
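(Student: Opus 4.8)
The plan is to follow the template of the proof of Corollary~\ref{CorBaumslagSolitar}: reduce the computation of $\Out(\cst(F\wr \Z))$ to a purely coarse-geometric statement via Theorem~\ref{thm:main}, and then invoke the quasi-isometry classification of lamplighter groups. First I would check that $F\wr \Z$ meets the hypotheses of Theorem~\ref{thm:main}, i.e.\ that it is a u.l.f.\ metric space with property A. With its word metric, a finitely generated group is automatically u.l.f., and $F\wr \Z$ is finitely generated since $F$ is finite. For property A it suffices to establish amenability: the base group $\bigoplus_{\Z}F$ is locally finite, hence amenable, and $F\wr \Z$ is an extension of it by $\Z$, so $F\wr \Z$ is amenable; being finitely generated, it then has property A by \cite[Theorem 4.14]{NowakYuBook}.

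Next, Theorem~\ref{thm:main} makes the canonical homomorphism $\Coa(F\wr \Z)\to\Out(\cst(F\wr \Z))$ an isomorphism, so the problem reduces to identifying $\Coa(F\wr \Z)$. Because $F\wr \Z$ is a finitely generated group, coarse maps and coarse Lipschitz (quasi-isometric) maps agree up to closeness, so $\Coa(F\wr \Z)=\mathrm{CoaLip}(F\wr \Z)$ by \cite[Corollary 1.4.15]{NowakYuBook}, exactly as in Corollary~\ref{CorBaumslagSolitar}.

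The decisive step is then to quote the classification of the self-quasi-isometry group of the lamplighter. Eskin, Fisher and Whyte \cite{EskinFisherWhyte2012Annals} show that $F\wr \Z$ (with $|F|=n$) is quasi-isometric to the Diestel--Leader graph $DL(n,n)$, a horocyclic product of two $(n{+}1)$-regular trees whose two boundaries at infinity are each bi-Lipschitz copies of the $n$-adic rationals $\Q_n$. Their rigidity theorem shows every self-quasi-isometry is detected by its induced action on these two boundaries, producing a pair of bi-Lipschitz homeomorphisms of $\Q_n$; the geometric symmetry interchanging the two trees accounts for the $\Z_2$ factor, acting by switching the two coordinates. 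This yields $\mathrm{CoaLip}(F\wr \Z)\cong(\mathrm{BiLip}(\Q_n)\times \mathrm{BiLip}(\Q_n))\rtimes \Z_2$, and combining this with the previous steps completes the proof.

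The main obstacle is purely bookkeeping rather than conceptual: confirming that the quasi-isometry group computed in \cite{EskinFisherWhyte2012Annals} is stated (or can be read off directly) in the precise semidirect-product form above, with the correct $\Z_2$-action by factor swap. All of the operator-algebraic work is already packaged inside Theorem~\ref{thm:main}, so once the geometric identification is pinned down the corollary follows immediately.
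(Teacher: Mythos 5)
Your proposal is correct and follows essentially the same route as the paper: verify property A via amenability and finite generation, reduce to $\Coa(F\wr\Z)$ by Theorem~\ref{thm:main}, identify $\Coa$ with $\mathrm{CoaLip}$ for finitely generated groups, and invoke the Eskin--Fisher--Whyte identification of $\mathrm{CoaLip}(\mathrm{DL}(n,n))$ with $(\mathrm{BiLip}(\Q_n)\times \mathrm{BiLip}(\Q_n))\rtimes \Z_2$. The only cosmetic difference is that you prove amenability directly (extension of the locally finite base $\bigoplus_\Z F$ by $\Z$) where the paper cites Woess, which is equally valid.
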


\begin{proof}
Since $F$ and $\Z$ are finitely generated, so is $F\wr \Z$ \cite[Chapter 2, Exercise 2.5]{NowakYuBook}. The lamplighter group $F\wr \Z$ is amenable (see \cite[Corollary 2.5]{Woess2013IntMathNach}), and since it is finitely generated, it has property A \cite[Theorem 4.14]{NowakYuBook}. Therefore, we only need to compute $\Coa(F\wr \Z) $ (Theorem \ref{thm:main}). As $F\wr \Z$ is finitely generated, $\Coa(F\wr \Z) =\mathrm{CoaLip}(F\wr \Z)$, by \cite[Theorem 1.4.13]{NowakYuBook}. Moreover, $F\wr \Z$ is quasi-isometric to the Diestel-Leader graph $D(n,n)$ (we refer to \cite[Section 1]{EskinFisherWhyte2012Annals} for both the definition of $\mathrm{DL}(n,n)$ and this fact). So it is enough to compute $\mathrm{CoaLip}(\mathrm{DL}(n,n))$. The result then follows since it is known that $\mathrm{CoaLip}(\mathrm{DL}(n,n))$ is isomorphic to $(\mathrm{BiLip}(\Q_n)\times \mathrm{BiLip}(\Q_n))\rtimes \Z_2$ \cite[Theorem 2.1]{EskinFisherWhyte2012Annals} (see the discussion at the end of \cite[Section 2]{EskinFisherWhyte2012Annals}).
\end{proof}

\begin{acknowledgements*}
The current paper started from a question asked by Ralf Meyer to the authors about whether Theorem \ref{thm:main} was true for the metric space $\Z^n$. The authors would like to thank Ralf Meyer for the interesting question and for several comments on a previous version of this paper. AV is partially supported by the ANR Project AGRUME (ANR-17-CE40-0026).
\end{acknowledgements*}

\end{document}